\title{Integrality in the Steinberg module and\\ the top-dimensional cohomology of $\SL_n \O_K$}
\author{Thomas Church,\ Benson Farb,\ and Andrew Putman\thanks{TC was supported by NSF grants DMS-1103807 and DMS-1350138 and the Alfred P.\ Sloan Foundation; BF was supported by NSF grant DMS-1105643; AP was supported by NSF grant DMS-1255350 and the Alfred P.\ Sloan Foundation.}\vspace{-6pt}}
\newenvironment{theorem-prime}[1]{\innerthm}{\endinnerthm}
\theoremstyle{plain}
\newtheorem{theorem}{Theorem}[section]
\newtheorem{maintheorem}{Theorem}
\newtheorem{proposition}[theorem]{Proposition}
\newtheorem*{mainthm-notconn}{Theorem~\ref{theorem:B2notconnected}}
\newtheorem{lemma}[theorem]{Lemma}
\newtheorem{claims}{Claim}
\newcommand\BeginClaims{\setcounter{claims}{0}}
\theoremstyle{definition}
\newtheorem{remark}[theorem]{Remark}
\newtheorem{definition}[theorem]{Definition}
\newtheorem*{notation}{Notation}
\newtheorem{example}[theorem]{Example}
\numberwithin{equation}{section}
\DeclareMathOperator{\Sp}{Sp}
\DeclareMathOperator{\GL}{GL}
\DeclareMathOperator{\SL}{SL}
\newcommand\HBolic{\ensuremath{\mathbb{H}}}
\newcommand\R{\ensuremath{\mathbb{R}}}
\newcommand\C{\ensuremath{\mathbb{C}}}
\newcommand\Z{\ensuremath{\mathbb{Z}}}
\newcommand\Q{\ensuremath{\mathbb{Q}}}
\newcommand\N{\ensuremath{\mathbb{N}}}
\newcommand\F{\ensuremath{\mathbb{F}}}
\DeclareMathOperator{\HH}{H}
\DeclareMathOperator{\Aut}{Aut}
\newcommand\Span[1]{\ensuremath{\langle #1 \rangle}}
\newcommand\Set[2]{\ensuremath{\{\text{#1 $|$ #2}\}}}
\newcommand{\para}[1]{\bigskip\noindent\textbf{#1.}}
\newcommand{\parasmallskip}[1]{\smallskip\noindent\textbf{#1.}}
\newcommand{\paranoskip}[1]{\noindent\textbf{#1.}}
\renewcommand{\epsilon}{\varepsilon}
\renewcommand{\O}{\mathcal{O}}
\newcommand\PartialBases{\ensuremath{\mathcal{B}}}
\newcommand{\PB}{\PartialBases}
\newcommand{\PartialBasesGood}{\PartialBases^{\text{gd}}}
\newcommand{\PBg}{\PartialBasesGood}
\DeclareMathOperator{\vcd}{vcd}
\DeclareMathOperator{\class}{cl}
\newcommand{\cl}{\class}
\newcommand\Tits{\ensuremath{\mathcal{T}}}
\newcommand\T{\Tits}
\DeclareMathOperator{\St}{St}
\newcommand\LL{\mathbf{L}}
\newcommand\II{\mathbf{I}}
\newcommand\vv{\mathbf{v}}
\newcommand\Ttop{\widetilde{T}}
\DeclareMathOperator{\Link}{Link}
\newcommand\Poset{\ensuremath{\mathcal{P}}}
\newcommand\PP{\Poset^{(n-2)}}
\DeclareMathOperator{\height}{ht}
\newcommand\VV{\mathbf{V}}
\newcommand\bS{\mathbf{S}}
\newcommand\into{\hookrightarrow}
\newcommand\onto{\twoheadrightarrow}
\newcommand\coloneq{\mathrel{\mathop:}\mkern-1.2mu=}
\newcommand\e{\mathbf{e}}
\newcommand\CC{\mathbf{C}}
\newcommand\abs[1]{\left\lvert#1\right\rvert}
\newcommand\tensor{\otimes}
\DeclareMathOperator{\Int}{Int}
\DeclareMathOperator{\Rank}{rk}
\DeclareMathOperator\spn{span}
\DeclareMathOperator\ord{ord}
\newcommand\iso{\cong}
\newcommand\GLvcd{\nu_n'}
\newcommand\SLvcd{\nu_n}
\newcommand{\arXiv}[1]{\href{http://arxiv.org/abs/#1}{arXiv:#1}}
\newcommand{\myemail}[1]{\href{mailto:#1}{\nolinkurl{#1}}}
\newcommand{\change}{}
\date{February 2, 2019}
\begin{document}

\maketitle

\begin{abstract}
We prove a new structural result for the spherical Tits building attached to $\SL_n K$ for many number fields $K$, and more generally for the fraction fields of many Dedekind domains $\O$:  the Steinberg module $\St_n(K)$ is generated by integral apartments if and only if the ideal class group $\cl(\O)$ is trivial.    We deduce this integrality by proving that the complex of partial bases of $\O^n$ is Cohen--Macaulay.  
We apply this to prove new vanishing and nonvanishing results for $\HH^{\SLvcd}(\SL_n\O_K;\Q)$, where $\O_K$ is the ring of integers in a number field and $\SLvcd$ is the virtual cohomological dimension of  $\SL_n\O_K$.   The (non)vanishing depends on the (non)triviality of the class group of $\O_K$.    We also obtain a vanishing theorem 
for the cohomology $\HH^{\SLvcd}(\SL_n\O_K;V)$ with twisted coefficients $V$. 
\end{abstract}

\section{Introduction}
\label{section:introduction}

\subsection{The Tits building and the Steinberg module}

One of the most fundamental geometric objects attached to the general linear group $\GL_n K$ over a field $K$ is its associated \emph{Tits building}, denoted $\Tits_n(K)$.  The space $\Tits_n(K)$ is the $(n-2)$-dimensional
simplicial complex whose $p$-simplices are flags of subspaces
\[0 \subsetneq V_0 \subsetneq \cdots \subsetneq V_p \subsetneq K^n.\]
The group $\GL_n K$ acts on $\Tits_n(K)$ by simplicial automorphisms.  
By the Solomon--Tits theorem \cite{SolomonChar}, the space $\Tits_n(K)$ is homotopy 
equivalent to a wedge of $(n-2)$-dimensional spheres, so it has
only one interesting homology group.

\begin{definition}[\textbf{The Steinberg module}]
\label{def:Steinberg}
The \emph{Steinberg module} $\St_n(K)$ 
is the $\GL_n K$-module given by the top homology group $\widetilde{\HH}_{n-2}(\Tits_n(K);\Z)$.
\end{definition}

The Solomon--Tits theorem also gives a generating set for $\St_n(K)$.
A \emph{frame} for $K^n$ is a  set $\LL = \{L_1,\ldots,L_n\}$ of lines in $K^n$ such that
$K^n = L_1 \oplus \cdots \oplus L_n$.  The \emph{apartment} $A_{\LL}$ corresponding to $\LL$ is the full subcomplex of $\Tits_n(K)$ on the $2^n-2$ subspaces $\spn(L_i \,|\, i \in I)$ for $\emptyset\subsetneq I\subsetneq \{1,\ldots,n\}$.
The apartment $A_{\LL}$ is 
homeomorphic to an $(n-2)$-sphere,  
so its fundamental class determines an {\em apartment class}
\[[A_{\LL}] \in \widetilde{\HH}_{n-2}(\Tits_n(K);\Z) = \St_n(K)\]
which is defined up to $\pm 1$.  The Solomon--Tits theorem 
states that $\St_n(K)$ is generated by the set
of apartment classes. 

\subsection{Integrality and non-integrality}  
Consider a Dedekind domain $\O$ with field of fractions $K$. While the action of $\GL_n K$ on $\Tits_n(K)$ is transitive, the action of $\GL_n \O$ is usually not, and indeed this action encodes arithmetic information about $\O$.  For example, it is well-known that the number of orbits of the $\GL_2 \O$-action on $\T_2(K)$ is the class number $\abs{\class(\O)}$ of $\O$ (in fact, the orbits are naturally in bijection with the ideal class group $\class(\O)$; see Proposition~\ref{proposition:quotientdesc} below for a generalization).  
In this context, we have the following natural notion.

\begin{definition}[\textbf{Integral apartment}]
\label{def:integral}
Let $\O$ be a Dedekind domain with field of fractions $K$. 
A frame $\LL=\{L_1,\ldots,L_n\}$ of $K^n$ is \emph{integral}, and $A_{\LL}$ is an \emph{integral apartment}, if 
\[\O^n=(L_1\cap \O^n)\oplus \cdots\oplus (L_n\cap \O^n).\]
\end{definition}
In the case $\O=\Z$, and more generally when $\O$ is Euclidean, Ash--Rudolph~\cite[Theorem~4.1]{AshRudolph} proved that $\St_n(K)$ is generated by the fundamental classes of integral apartments.
Our first theorem extends this 
result of Ash--Rudolph to a wide class of Dedekind domains of arithmetic type, whose definition we recall.

\begin{definition}[\textbf{Dedekind domain of arithmetic type}]
 Let $K$ be a global field, i.e.\ either a number field ($[K:\Q]<\infty$) or a function field in one variable over a finite field ($[K:\F_q(T)]<\infty$). Let $S$ be a finite nonempty set of places of $K$; if $K$ is a number field, assume that $S$ contains all infinite 
places. The ring of $S$-integers \[\O_S\coloneq\{x\in K \,|\, \ord_{\mathfrak{p}}(x)\geq 0\text{ for all }\mathfrak{p}\not\in S\}\] is a Dedekind domain, and we say that $\O_S$ is a Dedekind domain of \emph{arithmetic type}. 
\end{definition}


\begin{maintheorem}[\textbf{Integrality Theorem}]
\label{thm:ashrudolph}
\label{maintheorem:integralapartments}
Let $\O$ be a Dedekind domain with $\abs{\class(\O)}=1$ and field of fractions $K$. Assume either that $\O$ is Euclidean, or that $\O=\O_S$ is a Dedekind domain of arithmetic type such that $\abs{S}>1$ and $S$ contains a non-complex place.  Then $\St_n(K)$ is spanned by integral apartment classes.
\end{maintheorem}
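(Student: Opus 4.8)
The plan is to deduce the theorem from the single assertion that the \emph{complex of partial bases} $\PB(\O^n)$ is Cohen--Macaulay of dimension $n-1$, and to concentrate the real effort on proving that Cohen--Macaulayness. Since $\abs{\cl(\O)}=1$ the ring $\O$ is a PID, so every finitely generated projective $\O$-module is free; I let $\PB(\O^n)$ be the simplicial complex whose $k$-simplices are the $(k{+}1)$-element subsets $\{v_0,\dots,v_k\}\subseteq\O^n$ forming a \emph{partial basis}, meaning that $\O v_0\oplus\cdots\oplus\O v_k$ is a direct summand of $\O^n$. Its top-dimensional simplices are exactly the bases of $\O^n$, and an ordered basis $(v_1,\dots,v_n)$ determines the integral frame $\{K v_1,\dots,K v_n\}$, hence an integral apartment class in $\St_n(K)$ with its sign ambiguity resolved by the ordering. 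So the theorem will follow once one knows (i) that Cohen--Macaulayness of $\PB(\O^n)$ forces $\St_n(K)$ to be spanned by these classes, and (ii) that $\PB(\O^n)$ is indeed Cohen--Macaulay under the stated hypotheses.

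For (i): since $\O$ is a PID, intersection with $\O^n$ gives a bijection between $K$-subspaces of $K^n$ and direct summands of $\O^n$, so $\Tits_n(K)$ is canonically the complex of flags of proper nonzero summands of $\O^n$. For a frame $\LL=\{L_1,\dots,L_n\}$ of $K^n$ pick a generator $w_i$ of the rank-one summand $L_i\cap\O^n$ and set $d(\LL)=[\O^n:\O w_1\oplus\cdots\oplus\O w_n]\in\Z_{\ge 1}$, so that $d(\LL)=1$ exactly when $\LL$ is integral. By the Solomon--Tits theorem $\St_n(K)$ is spanned by all apartment classes $[A_\LL]$, so it suffices to show by induction on $d(\LL)$ that each $[A_\LL]$ lies in the span of the integral ones. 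When $d(\LL)>1$ the natural move is to produce a primitive vector $v=\sum_i a_i w_i$ with every $a_i\neq 0$ and to rewrite $[A_\LL]$, via the elementary modular-symbol relation valid in $\St_n(K)$ (the relation coming from the boundary of an $n$-simplex), as a signed sum of the apartment classes of the $n$ frames obtained by replacing a single $w_i$ by $v$; the content is to choose $v$ so that each resulting frame has strictly smaller defect. For Euclidean $\O$ the Euclidean algorithm produces such a $v$, which recovers the theorem of Ash--Rudolph; in general the existence of an admissible reduction is precisely what Cohen--Macaulayness of $\PB(\O^n)$ (together with the analogous statement for its links, which are partial-basis complexes of quotient lattices up to the standard one-coordinate ``fattening'') supplies. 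Everything in this step is essentially formal once (ii) is in hand.

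Step (ii) is where I expect all the difficulty to lie. I would prove that $\PB(\O^n)$ is Cohen--Macaulay of dimension $n-1$ by induction on $n$, using van der Kallen's connectivity machinery for complexes of unimodular sequences: compare $\PB(\O^n)$ with the complex obtained by deleting an open star and with the link of a vertex --- the latter being, up to a one-coordinate fattening, the partial-basis complex of an $(n{-}1)$-dimensional quotient lattice --- and propagate connectivity by a Mayer--Vietoris/nerve argument, checking at each stage that the links are as highly connected as Cohen--Macaulayness demands. The genuinely new ingredient, and the place where the hypotheses enter, is a ``Euclidean-type'' transitivity property of $\O$ governing when a unimodular vector can be carried to a standard position through elementary moves: for Euclidean $\O$ this is the Euclidean algorithm, while for $\O=\O_S$ of arithmetic type it should come from Dirichlet's unit theorem --- the hypothesis $\abs{S}>1$ forces $\O_S^{\times}$ to be infinite --- together with the classical results on generation of $\SL_n(\O_S)$ by elementary matrices and triviality of the Mennicke symbol (Bass--Milnor--Serre, Vaserstein), the non-imaginary-place hypothesis being what makes these inputs available. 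Establishing this combinatorial/arithmetic property with enough uniformity to feed the van der Kallen induction all the way up to the top degree is the main obstacle.
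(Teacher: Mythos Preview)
Your overall architecture is right: the theorem reduces to the Cohen--Macaulayness of $\PB_n(\O)$, and that is where the real work lies. But your step (i) contains a genuine gap, and your plan for step (ii) underestimates what is needed.

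\medskip
\noindent\textbf{Step (i).} You propose to start from Solomon--Tits and then run a defect-reduction induction \`a la Ash--Rudolph, asserting that ``the existence of an admissible reduction is precisely what Cohen--Macaulayness of $\PB(\O^n)$ supplies.'' This is not justified, and I do not see how to justify it: Cohen--Macaulayness is a connectivity statement, not an algorithm, and gives no mechanism for producing a primitive $v=\sum a_i w_i$ such that each of the $n$ frames obtained by swapping in $v$ has strictly smaller index in $\O^n$. The paper's deduction is entirely different and avoids non-integral apartments altogether. One defines the integral apartment class map $\phi\colon C_{n-1}(\PB_n(\O))\to \St_n(K)$ and factors it as
\[
C_{n-1}\xrightarrow{\partial}\widetilde{\HH}_{n-2}\bigl(\PB_n(\O)^{(n-2)}\bigr)\xrightarrow[\cong]{b}\widetilde{\HH}_{n-2}\bigl(\Poset(\PB_n(\O)^{(n-2)})\bigr)\xrightarrow{F_*}\widetilde{\HH}_{n-2}(\Tits_n(K))=\St_n(K),
\]
where $F(\{v_1,\dots,v_r\})=\spn_K(v_1,\dots,v_r)$. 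Surjectivity of $\partial$ is exactly $(n-2)$-acyclicity of $\PB_n(\O)$. Surjectivity of $F_*$ comes from Quillen's fiber theorem: since $\abs{\cl(\O)}=1$, for each $V\in\Tits_n(K)$ the fiber $F_{\leq V}$ is identified with $\Poset(\PB_\ell(\O))$ where $\ell=\dim V$, and this is CM of dimension $\ell-1=\height(V)$ by the Cohen--Macaulay hypothesis applied in all ranks $\leq n$. Solomon--Tits and defect reduction play no role.

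\medskip
\noindent\textbf{Step (ii).} Van der Kallen's machinery using $\text{SR}_3$ only yields $(n-3)$-connectedness of $\PB_n(\O)$, one degree short of what you need; this is sharp for general Dedekind domains, so a straight ``van der Kallen induction'' cannot close the gap. The paper has to strengthen the inductive hypothesis substantially: it introduces auxiliary complexes $\PB_n(I)$ of partial $I$-bases for every ideal $I\subset\O$ (vectors whose last coordinate is $\equiv 0$ or $1\bmod I$) and proves all of them are CM simultaneously. The arithmetic hypotheses enter at the base case $n=2$, where connectedness of $\PB_2(I)$ is equivalent to elementary generation of the congruence subgroup $\Gamma_1(\O,I)\subset\SL_2\O$; this is where Vaserstein--Liehl and Bass--Milnor--Serre are invoked, and why one needs $\abs{S}>1$ together with a non-imaginary place (not merely elementary generation of $\SL_2\O$ itself). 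You should expect to need this relative structure; the naive induction on $n$ alone does not go through.
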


Under the conditions of Theorem~\ref{thm:ashrudolph}, the 
set of 
integral apartments in $\Tits_n(K)$ is precisely the $\GL_n\O$-orbit of a single standard apartment.  Thus Theorem~\ref{thm:ashrudolph} implies that $\St_n(K)$, which is obviously cyclic as a $\GL_n K$-module, is in fact cyclic as a $\GL_n \O$-module.

\begin{remark}
Ash--Rudolph's proof \cite{AshRudolph} of Theorem~\ref{maintheorem:integralapartments}
when $\O$ is Euclidean is based on a beautiful generalization of the method of continued fractions to higher dimensions.  Using the Euclidean function on
$\O$ as a measure of ``complexity'',  they give an algorithm to write a non-integral apartment class as a sum of integral apartment classes.  Our proof is quite different, even in the special case that $\O$ is Euclidean: non-integral apartments never actually
show up in our proof, and we do not even make use of the fact that
$\St_n(K)$ is generated by apartment classes.  
\end{remark}

By Hasse--Chevalley's generalization of Dirichlet's theorem, the group of units $\O_S^\times$ has rank $\abs{S}-1$, so the assumption $\abs{S}>1$ is equivalent to $\abs{\O_S^\times}=\infty$. The assumptions on $S$ rule out 
only two families of Dedekind domains of arithmetic type, for which Theorem~\ref{thm:ashrudolph} need not hold (see Remark~\ref{rem:thmCfalseimaginary} below):
\begin{itemize}[nosep]
\item the ring of integers $\O_K$ in a totally imaginary number field $K$;
\item those $\O_S$ in positive characteristic with finitely many units, such as $\F_q[T]$. 
\end{itemize}
The assumption in Theorem~\ref{thm:ashrudolph} that $\abs{\class(\O_S)}=1$ obviously excludes many more examples.  However, this assumption is necessary in a strong sense, as the following theorem shows.

\begin{maintheorem}[\textbf{Non-Integrality Theorem}]
\label{maintheorem:nonintegrality}
\label{thm:Stnotgen}
Let $\O$ be a Dedekind domain with field of fractions $K$.  If  $1<\abs{\class(\O)}<\infty$ and $n\geq 2$ 
then $\St_n(K)$ is not generated by integral apartment classes.
\end{maintheorem}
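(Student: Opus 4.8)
The plan is to produce, for every $n\ge 2$, a homomorphism $\phi\colon \St_n(K)\to\Z$ that annihilates every integral apartment class but is not identically zero. Since the Solomon--Tits theorem says the apartment classes span $\St_n(K)$, if the integral ones spanned it as well we would get $\phi\equiv 0$, a contradiction. The feature that makes such a $\phi$ easy to manufacture is that, for dimension reasons, $\St_n(K)=\widetilde{\HH}_{n-2}(\Tits_n(K);\Z)$ is exactly the group of top cycles $Z_{n-2}(\Tits_n(K);\Z)\subseteq C_{n-2}(\Tits_n(K);\Z)$, and $C_{n-2}$ is the free abelian group on complete flags $0\subsetneq V_0\subsetneq\cdots\subsetneq V_{n-2}\subsetneq K^n$. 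Hence any $\Z$-valued function $g$ on complete flags extends $\Z$-linearly to $C_{n-2}$ and restricts to a homomorphism on $\St_n(K)$ with no relations to check.

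To build $g$ I would attach to a complete flag $F$ the tuple $\mathbf c(F)=(c_0,\dots,c_{n-1})\in\cl(\O)^n$ of Steinitz classes of the successive quotients of the saturated flag $0\subsetneq\overline{V_0}\subsetneq\cdots\subsetneq\overline{V_{n-2}}\subsetneq\O^n$, where $\overline{V_i}=V_i\cap\O^n$; these classes satisfy $\sum_i c_i=0$ since $\det\O^n=\O$. Take $g(F)=\lambda(\mathbf c(F))$ for a function $\lambda$ of the multiset of the $c_i$. The vanishing on integral apartments is the crux: if $A_{\LL}$ is integral, so that $\O^n=\bigoplus_i M_i$ with $M_i=L_i\cap\O^n$, then the complete flag $F_\sigma$ of $A_{\LL}$ indexed by a permutation $\sigma$ has $\overline{(V_j)}=M_{\sigma(1)}\oplus\cdots\oplus M_{\sigma(j+1)}$, so $\mathbf c(F_\sigma)=([M_{\sigma(1)}],\dots,[M_{\sigma(n)}])$ is just a permutation of the fixed tuple $([M_1],\dots,[M_n])$; as $\lambda$ depends only on the multiset, $g(F_\sigma)$ is independent of $\sigma$, and since $[A_{\LL}]=\sum_{\sigma}\operatorname{sgn}(\sigma)[F_\sigma]$ while the alternating sum of signs over the symmetric group vanishes for $n\ge 2$, we get $\phi([A_{\LL}])=0$. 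For $n=2$ the construction is then already complete: $\mathbf c$ of the flag $(V_0)$ is the pair $\{[V_0\cap\O^2],\,-[V_0\cap\O^2]\}$, which as $V_0$ ranges over all lines realizes every element of $\cl(\O)/(\pm1)$ --- a set with more than one element because $|\cl(\O)|>1$ --- so there are lines $L,L'$ with $\mathbf c(L)\neq\mathbf c(L')$, and $\phi([L]-[L'])\neq 0$ for $\lambda$ an indicator function.

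The main obstacle is the non-vanishing $\phi\not\equiv 0$ for general $n$: I need a (necessarily non-integral) frame $\LL_0$ and a symmetric $\lambda$ with $\sum_\sigma\operatorname{sgn}(\sigma)\lambda(\mathbf c(F_\sigma))\neq 0$, equivalently with $\sum_\sigma\operatorname{sgn}(\sigma)[\mathbf c(F_\sigma)]\neq 0$ in the free abelian group on multisets, after which $\lambda$ can be taken to pick out one coordinate. Unwinding $\mathbf c(F_\sigma)$, its entries are differences $[\,\overline{\spn(L_i:i\in S_{j+1})}\,]-[\,\overline{\spn(L_i:i\in S_{j})}\,]$ along the subset chain $S_\bullet$ determined by $\sigma$, so everything is governed by the Steinitz classes of the partial spans of $\LL_0$. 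The delicate point is that these classes cannot be chosen arbitrarily --- they are constrained by the geometry of a frame --- and, worse, a naive choice fails: if the non-triviality of the class group is concentrated inside a proper subspace (e.g.\ $\LL_0$ consists of $n-2$ standard coordinate lines and two ``twisted'' lines), then $\mathbf c(F_\sigma)$ depends on $\sigma$ only through the position of one line in the flag, and the alternating sum over the symmetric group collapses to $0$ once $n\ge 4$. So the real work is to exhibit a sufficiently generic frame --- tracking precisely which families of partial-span Steinitz classes are realizable --- whose associated alternating sum of multisets is nonzero; this is where the genuine content of the general case lies.
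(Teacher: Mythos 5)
Your framework is sound and parallels the paper's strategy: both construct a map out of $\St_n(K)$ that kills integral apartment classes but survives on the whole Steinberg module. The observation that $\St_n(K)=Z_{n-2}(\Tits_n(K);\Z)$ sits inside the free abelian group $C_{n-2}$ (since the complex is $(n-2)$-dimensional) is correct, so any function on complete flags does extend to a homomorphism on $\St_n(K)$ with no relations to check. Your vanishing argument is also clean --- in one respect cleaner than the paper's: by making $\lambda$ depend only on the \emph{multiset} of successive Steinitz quotients (rather than the ordered tuple $([\overline{V_0}],\ldots,[\overline{V_{n-2}}])$, which is what the paper's $\psi$ records), you force every integral apartment class to map to exactly zero, whereas the paper shows only that integral classes land in a proper subspace and closes with a counting argument.

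The gap is the one you flag yourself: the non-vanishing of $\phi$. This is where essentially all the content of Theorem~\ref{maintheorem:nonintegrality} lives, and your proposal stops at the threshold. It is genuinely delicate because, as you note, the Steinitz classes of saturated partial spans of a frame are heavily constrained by the geometry of the frame, and naive choices produce alternating sums that collapse once $n\geq 4$. The paper's Proposition~\ref{prop:foldedapartment} (``Detecting spaces of cycles'') is precisely the missing step: it proves that the poset map $\psi\colon\Tits(M)\to X_{n-1}(\cl(\O))$, $\psi(U)=(\Rank U,[U])$, is \emph{surjective} on top homology. This is done by building, for each abstract apartment $A_{\bS}$ of $X_{n-1}(\cl(\O))$, a nested tower of summands $B_k$ and $A_k^{(i)}$ of $M$ whose intersections define a frame $\II$ with $\psi_*([A_{\II}])=[A_{\bS}]$; the $n!$ chambers of $A_\II$ are then sorted into $2^{n-1}$ ``good'' chambers mapping bijectively to the chambers of $A_\bS$ and $n!-2^{n-1}$ ``bad'' chambers that cancel in pairs under a combinatorial involution $\sigma\mapsto\sigma\cdot(x_\sigma\ y_\sigma)$. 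Your multiset variant would still need an argument of exactly this flavor --- exhibit a frame whose alternating multiset sum is nonzero --- and that is not a minor step to leave unsupplied; it is the theorem.
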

Theorem~\ref{maintheorem:nonintegrality}
 applies in particular to all number rings with nontrivial class group.

\subsection{The top-degree cohomology of \texorpdfstring{$\SL_n \O_K$}{SLn(O)} and \texorpdfstring{$\GL_n \O_K$}{GLn(O)}}
\label{section:topcohomologyintro}

When $\O_K$ is the ring of integers in a number field $K$, the Steinberg module $\St_n(K)$ is directly connected to the cohomology of  $\SL_n \O_K$ and $\GL_n \O_K$, as we now explain.   

Let $r_1$ (resp.\ $2r_2$) be the number of real (resp.\ complex) embeddings of $\O_K$.  Then  
$\O_K\tensor \R\iso \R^{r_1}\oplus \C^{r_2}$ and $\SL_n (\O_K\tensor \R) \iso (\SL_n \R)^{r_1} \times (\SL_n \C)^{r_2}$.  Recall that the symmetric space associated to such a Lie
group is the quotient of the group by a maximal compact subgroup.  The quotient $M_K$ of the symmetric space associated to $\SL_n (\O_K\tensor \R)$ by the discrete group $\SL_n\O_K$ is a  Riemannian orbifold with $\HH^*(M_K;\Q)\iso H^*(\SL_n\O_K;\Q)$.

The computation of these cohomology groups is a fundamental problem in group theory, topology and number theory. The existence of $M_K$ implies that $\HH^i(\SL_n\O_K;\Q)=0$ for $i>\dim M_K$. But since $M_K$ is not compact,  it is not even clear what the largest $i$ for which $\HH^i(\SL_n\O_K;\Q)\neq 0$ is.   The first progress on this basic problem came in 1973 by Borel and Serre \cite{BorelSerreCorners}. Recall that the {\em virtual cohomological dimension} of a virtually torsion-free group $\Gamma$ is
\[\vcd(\Gamma)\coloneq  \max \Set{$k$}{$\HH^k(\Gamma;V\tensor \Q) \neq 0$ for some $\Gamma$-module $V$}.\]
Set $\SLvcd = \vcd(\SL_n \O_K)$ and $\GLvcd = \vcd(\GL_n \O_K)$.
By constructing and analyzing a compactification of the orbifold $M_K$, Borel--Serre proved that 
\begin{equation}
\label{eq:GLSLvcd}
\GLvcd = r_1\binom{n+1}{2} + r_2\cdot n^2 - n \quad \text{and} \quad \SLvcd = \GLvcd-(r_1+r_2-1).
\end{equation}
In particular $\HH^k(\SL_n \O_K;\Q)=0$ for $k>\SLvcd$. 
The coefficient module $V$ used by Borel--Serre to certify that $\HH^{\SLvcd}(\SL_n \O_K;V)\neq 0$ is infinite-dimensional.  This left open the question as to whether or not $\SL_n \O_K$ has untwisted rational cohomology in the top dimension $\SLvcd$.

In 1976 Lee--Szczarba~\cite[Theorem~1.3]{LeeSzczarbaCongruence} answered this question in the special case that $\O_K$ is Euclidean, proving that $\HH^{\SLvcd}(\SL_n \O_K;\Q)=0$ for these $\O_K$.      Except for some explicit computations in low-dimensional cases (sometimes with the help of computers), there seems to have been no progress on this question since Lee--Szczarba.


Our next theorem extends the theorem of Lee--Szczarba from Euclidean number rings to many number rings with class number $1$ (we remark that even in the Euclidean case our proof is different from
that of Lee--Szczarba).  In addition to untwisted coefficients $\Q$, it also gives a result for twisted coefficient systems arising from rational representations
of the algebraic group $\GL_n$.  
For $\lambda = (\lambda_1,\ldots,\lambda_n) \in \Z^n$ with
$\lambda_1 \geq \cdots \geq \lambda_n$, 
let $V_{\lambda}$ be the rational $\GL_n K$-representation
with highest weight $\lambda$, and define $\|\lambda\| = \sum_{i=1}^{n} (\lambda_i - \lambda_n)$.

\begin{maintheorem}[\textbf{Vanishing Theorem}]
\label{maintheorem:vanishing}
Let $\O_K$ be the ring of integers in an algebraic number field $K$ with $\abs{\class(\O_K)} = 1$.  Suppose that 
$K$ has a real embedding or that $\O_K$ is Euclidean. Then 
\[\HH^{\SLvcd}(\SL_n \O_K;V_{\lambda}) = \HH^{\GLvcd}(\GL_n \O_K;V_{\lambda}) = 0\]
for $n \geq 2 + \|\lambda\|$.  In particular, $\HH^{\SLvcd}(\SL_n \O_K;\Q) = \HH^{\GLvcd}(\GL_n \O_K;\Q) = 0$ for $n\geq 2$.
\end{maintheorem}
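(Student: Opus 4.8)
The plan is to deduce this from Borel--Serre duality together with the Integrality Theorem (Theorem~\ref{maintheorem:integralapartments}). First note that the hypotheses here imply those of Theorem~\ref{maintheorem:integralapartments}: if $K=\Q$ then $\O_K=\Z$ is Euclidean, and otherwise a real embedding of $K$ provides a non-imaginary place in the set $S$ of archimedean places, with $\abs{S}=r_1+r_2>1$. Hence $\St_n(K)$ is spanned by integral apartment classes, and by the remark following Theorem~\ref{maintheorem:integralapartments} these form a single $\GL_n\O_K$-orbit; since the stabilizer of a frame contains diagonal unit matrices of every determinant, they also form a single $\SL_n\O_K$-orbit. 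Thus $\St_n(K)$ is a cyclic module over $\SL_n\O_K$ (and over $\GL_n\O_K$), generated by the class $[A_{\LL_0}]$ of the standard apartment.

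It is enough to prove the $\SL_n$ statement. Indeed, applying the Hochschild--Serre spectral sequence to $1\to\SL_n\O_K\to\GL_n\O_K\xrightarrow{\det}\O_K^\times\to1$ and using $\GLvcd=\SLvcd+\Rank(\O_K^\times)$ (\S\ref{section:vanishing}) together with $\operatorname{cd}_\Q(\O_K^\times)=\Rank(\O_K^\times)$, the group $\HH^{\GLvcd}(\GL_n\O_K;V_\lambda)$ is identified with $\HH^{\Rank(\O_K^\times)}\bigl(\O_K^\times;\HH^{\SLvcd}(\SL_n\O_K;V_\lambda)\bigr)$, which vanishes as soon as $\HH^{\SLvcd}(\SL_n\O_K;V_\lambda)=0$. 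Now by Borel--Serre $\SL_n\O_K$ is a virtual duality group of dimension $\SLvcd$ with dualizing module $\St_n(K)$ (up to a sign twist that will not affect the argument), so
\[\HH^{\SLvcd}(\SL_n\O_K;V_\lambda)\ \cong\ \HH_0\bigl(\SL_n\O_K;\St_n(K)\tensor V_\lambda\bigr)\ =\ \bigl(\St_n(K)\tensor V_\lambda\bigr)_{\SL_n\O_K}.\]
Because $\St_n(K)$ is generated by $[A_{\LL_0}]$, every class in this coinvariant module is represented by some $[A_{\LL_0}]\tensor v$ with $v\in V_\lambda$, so $v\mapsto[[A_{\LL_0}]\tensor v]$ defines a surjection $V_\lambda\onto(\St_n(K)\tensor V_\lambda)_{\SL_n\O_K}$.

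Let $\Sigma=\operatorname{Stab}_{\SL_n\O_K}(A_{\LL_0})$, the group of determinant-one unit-scaled permutation matrices; it acts on $[A_{\LL_0}]$ through the sign character $\chi\colon\Sigma\to\{\pm1\}$ of the underlying permutation, the diagonal part acting trivially. A direct computation in coinvariants shows that the surjection above is equivariant for the $\Sigma$-action on $V_\lambda$ and the $\chi$-twisted trivial action on the target, hence factors through $(V_\lambda\tensor\chi)_\Sigma$; it remains to prove $(V_\lambda\tensor\chi)_\Sigma=0$ whenever $n\ge 2+\|\lambda\|$. The diagonal unit torus $T\le\Sigma$ acts trivially on $\chi$, so $(V_\lambda\tensor\chi)_\Sigma$ is a quotient of $(V_\lambda)_T$, the sum of the weight spaces of $V_\lambda$ whose weight is trivial on $T$. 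When $\O_K^\times$ is infinite, a unit of infinite order forces such a weight to be a determinant power $(c,\dots,c)$, which occurs in $V_\lambda$ only if $n\mid\sum_i\lambda_i$; since $\sum_i\lambda_i=n\lambda_n+\|\lambda\|$ and $0\le\|\lambda\|<n$, this means $\lambda=(\lambda_n,\dots,\lambda_n)$, so $V_\lambda$ is one-dimensional with trivial $\SL_n$-action while $\chi$ restricts to the nontrivial sign character of $S_n$, and the coinvariants vanish.

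The remaining case is that $\O_K^\times$ is finite, i.e.\ $\O_K=\Z$ or one of the finitely many Euclidean imaginary quadratic rings. Here $T$ is finite and the argument above must be supplemented by a representation-theoretic computation: one shows that the $S_n$-subrepresentations of $V_\lambda$ cut out by the congruence conditions imposed by $T$ contain neither the trivial nor the sign representation once $n\ge 2+\|\lambda\|$, using that in that range $V_\lambda$ has no zero weight space unless $\lambda=0$ (and that in the exceptional case $\lambda=0$ the target is the sign representation of $S_n$, which has no coinvariants for $n\ge2$). This finite-unit case, together with pinning down the precise sign twist in Borel--Serre duality, is the step I expect to require the most care; the infinite-unit case --- which covers every number ring with a real embedding other than $\Q$ --- follows cleanly as above.
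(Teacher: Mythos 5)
Your route — using the Integrality Theorem to show $\St_n(K)$ is cyclic as an $\SL_n\O_K$-module, passing via Borel--Serre duality to coinvariants, and reducing to showing $(V_\lambda\tensor\chi)_\Sigma=0$ where $\Sigma$ is the stabilizer of the standard integral apartment and $\chi$ its sign action on the apartment class — is a genuinely different strategy from the paper's, and the setup (including the factoring through $(V_\lambda\tensor\chi)_\Sigma$) is sound.  Your infinite-unit argument, in which a unit of infinite order forces any $T$-fixed weight to be a determinant power and divisibility of $\sum_i\lambda_i$ then forces $V_\lambda$ to be one-dimensional, is correct and cleanly handles every $\O_K$ with a real embedding other than $\Z$.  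The Hochschild--Serre reduction from $\GL_n$ to $\SL_n$ is also valid, though it is more elaborate than needed: since $\SL_n\O_K\subset\GL_n\O_K$, the target $(\St_n(K)\tensor V_\lambda)_{\GL_n\O_K}$ is simply a quotient of $(\St_n(K)\tensor V_\lambda)_{\SL_n\O_K}$.

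The genuine gap is the finite-unit case, which covers exactly $\O_K=\Z$ and the five Euclidean imaginary quadratic rings — these are not a corner case but the entire content of the ``or $\O_K$ is Euclidean'' clause in the statement.  You acknowledge this case as unfinished, and the sketch you offer would not work as stated: when $T$ is finite (e.g.\ $T\iso(\Z/2)^{n-1}$ for $\O_K=\Z$) the $T$-fixed subspace of $V_\lambda$ consists of all weight spaces whose coordinates share a common parity, which is far larger than the zero weight space.  For instance $V_{(2,0,\ldots,0)}=\Sym^2 K^n$ with $n\geq 4$ has $(V_\lambda)_T=\spn\{v_1^2,\ldots,v_n^2\}\neq 0$, so ``$V_\lambda$ has no zero weight space unless $\lambda=0$'' does not settle the question; one must actually determine the $S_n$-module structure of $(V_\lambda)_T$ and check it contains no sign representation after the $\chi$-twist, and you have not done this.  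The paper avoids this computation entirely with a device your proposal does not contain: after normalizing $\lambda_n=0$, one embeds $V_{\lambda'}$ as a summand of $(K^n)^{\tensor\ell}$ with $\ell=\|\lambda\|\leq n-2$ via Schur--Weyl duality, and for each basis monomial $v_{i_1}\tensor\cdots\tensor v_{i_\ell}$ chooses indices $j,j'$ disjoint from $\{i_1,\ldots,i_\ell\}$ and the determinant-one element $g$ with $g(v_j)=v_{j'}$, $g(v_{j'})=-v_j$, $g(v_i)=v_i$ otherwise; then $g$ fixes the monomial and negates $[A_\LL]$, so the class dies in coinvariants.  This uniform trick works identically whether $\O_K^\times$ is finite or infinite and replaces the representation-theoretic analysis you deferred.
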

For twisted coefficients, Theorem~\ref{maintheorem:vanishing} seems to be new even in the case when $\O_K$ is Euclidean.
As we discuss in Remark~\ref{rem:thmCfalseimaginary} below, the assumption that $K$ has a real embedding is necessary if $\O_K$ is not Euclidean.

In contrast to Theorem~\ref{maintheorem:vanishing}, a different phenomenon occurs when $\abs{\class(\O_K)} \neq 1$.

\begin{maintheorem}[\textbf{Non-Vanishing Theorem}]
\label{maintheorem:nonvanishing} 
Let $\O_K$ be the ring of integers in an algebraic number field $K$.  Then for $n \geq 2$,
\[\dim \HH^{\SLvcd}(\SL_n \O_K;\Q) \geq (\abs{\class(\O_K)} - 1)^{n-1}.\]
\end{maintheorem}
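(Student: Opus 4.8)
The plan is to use Borel--Serre duality to reduce the statement to a lower bound on the $\GL_n\O_K$-coinvariants of the Steinberg module, and then to construct out of the ideal-class group of $\O_K$ a quotient of those coinvariants, with trivial group action, of the stated dimension.

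\textbf{Step 1 (reduction via duality).}
By the duality theorem of Borel--Serre \cite{BorelSerreCorners}, $\HH^{\SLvcd}(\SL_n\O_K;\Q)\iso(\St_n(K)\tensor\Q)_{\SL_n\O_K}$; since $\GL_n\O_K$ is commensurable with $\SL_n\O_K\times\O_K^\times$ and $\O_K^\times$ has $\Q$-cohomological dimension $\GLvcd-\SLvcd$, the Lyndon--Hochschild--Serre spectral sequence for $1\to\SL_n\O_K\to\GL_n\O_K\to\O_K^\times\to1$ collapses to give
\[\HH^{\GLvcd}(\GL_n\O_K;\Q)\iso\bigl(\HH^{\SLvcd}(\SL_n\O_K;\Q)\bigr)_{\O_K^\times}\iso(\St_n(K)\tensor\Q)_{\GL_n\O_K}.\]
In particular $\HH^{\GLvcd}(\GL_n\O_K;\Q)$ is a quotient of $\HH^{\SLvcd}(\SL_n\O_K;\Q)$, which is the first inequality; it remains to show $\dim_\Q(\St_n(K)\tensor\Q)_{\GL_n\O_K}\geq(\abs{\class(\O_K)}-1)^{n-1}$.

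\textbf{Step 2 (a Steinitz-class quotient).}
As $\T_n(K)$ is $(n-2)$-dimensional, $\St_n(K)=\Ker\bigl(\partial\colon\Chain_{n-2}(\T_n(K);\Z)\to\Chain_{n-3}(\T_n(K);\Z)\bigr)$ is a submodule of $\Chain_{n-2}(\T_n(K);\Z)$. For a maximal simplex $\sigma=(V_1\subsetneq\cdots\subsetneq V_{n-1})$ of $\T_n(K)$, let $c_i(\sigma)\in\class(\O_K)$ be the Steinitz class of the rank-$i$ projective module $V_i\cap\O_K^n$, and set $\phi(\sigma)=\delta_{c_1(\sigma)}\tensor\cdots\tensor\delta_{c_{n-1}(\sigma)}$, where $\delta_c$ denotes the corresponding standard basis vector of $\Q[\class(\O_K)]$. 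Since $g\cdot(V_i\cap\O_K^n)=gV_i\cap\O_K^n\iso V_i\cap\O_K^n$ as $\O_K$-modules for $g\in\GL_n\O_K$, the resulting homomorphism $\phi\colon\Chain_{n-2}(\T_n(K);\Q)\to\Q[\class(\O_K)]^{\tensor(n-1)}$ is $\GL_n\O_K$-invariant, so its restriction to $\St_n(K)$ factors through a map
\[\bar\phi\colon(\St_n(K)\tensor\Q)_{\GL_n\O_K}\longrightarrow\Q[\class(\O_K)]^{\tensor(n-1)}.\]
By the Solomon--Tits theorem $\St_n(K)$ is spanned by apartment classes, so $\Image(\bar\phi)$ is spanned by the elements $\phi([A_{\LL}])$; computing with the standard triangulation of $A_{\LL}\iso S^{n-2}$ gives, with $c_I(\LL)\coloneq[\spn(L_i\mid i\in I)\cap\O_K^n]$,
\[\phi([A_{\LL}])=\sum_{\tau\in S_n}\operatorname{sgn}(\tau)\,\delta_{c_{\{\tau(1)\}}(\LL)}\tensor\delta_{c_{\{\tau(1),\tau(2)\}}(\LL)}\tensor\cdots\tensor\delta_{c_{\{\tau(1),\dots,\tau(n-1)\}}(\LL)}.\]
Applying the augmentation $\Q[\class(\O_K)]\to\Q$ in the $k$-th tensor factor and pairing each $\tau$ with $\tau\circ(k\ k{+}1)$ --- which has the opposite sign but gives the same flag once the $k$-th subspace is deleted --- shows that $\phi([A_{\LL}])$ lies in $W\coloneq\bigl(\Ker(\Q[\class(\O_K)]\to\Q)\bigr)^{\tensor(n-1)}$, a subspace of $\Q[\class(\O_K)]^{\tensor(n-1)}$ of dimension exactly $(\abs{\class(\O_K)}-1)^{n-1}$. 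It therefore suffices to prove that $\bar\phi$ surjects onto $W$.

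\textbf{Step 3 (surjectivity: the main point).}
For $n=2$ this is immediate: $\St_2(K)=\RH_0(\T_2(K))=\Ker(\Z[\Proj^1 K]\to\Z)$, every class occurs as $c_{\{i\}}(\LL)=[L_i\cap\O_K^2]$ (take $L_i$ to be the line spanned by a vector whose coordinates generate a fractional ideal in the chosen class), and $\phi([A_{\{L_1,L_2\}}])=\delta_{c_{\{1\}}}-\delta_{c_{\{2\}}}$; these span $W=\Ker(\Q[\class(\O_K)]\to\Q)$. For general $n$ one wants, for each elementary tensor $(\delta_{a_1}-\delta_{a_1'})\tensor\cdots\tensor(\delta_{a_{n-1}}-\delta_{a_{n-1}'})$ in a spanning set of $W$, a frame $\LL$ whose subspace-classes $c_I(\LL)$ exhibit enough coincidences that the alternating sum above collapses to that tensor up to sign; for instance when $n=3$ a frame with $c_{\{2\}}=c_{\{3\}}$ and $c_{\{1,2\}}=c_{\{2,3\}}$ has $\phi([A_{\LL}])=(\delta_{c_{\{1\}}}-\delta_{c_{\{2\}}})\tensor(\delta_{c_{\{1,2\}}}-\delta_{c_{\{1,3\}}})$. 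The crux --- and the step I expect to be the main obstacle --- is the construction of such frames, i.e.\ showing that the subspace Steinitz classes $c_I(\LL)$ of an apartment can be prescribed with sufficient freedom. This is a question about the arithmetic of $\O_K$: the Steinitz class of a primitive sublattice $\Lambda\subseteq\O_K^n$ can be shifted by any prescribed element of $\class(\O_K)$ by enlarging $\Lambda$ along a torsion quotient of the appropriate class (using that every ideal class contains ideals coprime to any fixed ideal), and the remaining incidence relations among the $c_I$ can be arranged by an induction on $n$ that adjoins one suitably chosen line to a frame of $K^{n-1}$. Combined with the (elementary) linear algebra that the resulting vectors $\phi([A_{\LL}])$ span $W$, this yields $\Image(\bar\phi)=W$, which completes the proof.
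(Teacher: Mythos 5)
Your Steps 1 and 2 recover the paper's reduction: Borel--Serre duality converts the problem to a lower bound on $(\St_n(K)\tensor\Q)_{\GL_n\O_K}$, and the Steinitz-class map you define is exactly the paper's map $\psi\colon\Tits_n(K)\to X_{n-1}(\cl(\O_K))$ written at the chain level, with your target $W=\bigl(\ker(\Q[\cl(\O_K)]\to\Q)\bigr)^{\tensor(n-1)}$ being precisely $\widetilde{\HH}_{n-2}(X_{n-1}(\cl(\O_K));\Q)$. (Your observation that the paired transpositions $\tau\leftrightarrow\tau\circ(k\,\,k{+}1)$ kill the augmentation is a neat chain-level way of seeing that the image lands in reduced homology.) Your $n=3$ computation checks out, and interestingly the coincidences you impose ($c_{\{2\}}=c_{\{3\}}$, $c_{\{1,2\}}=c_{\{2,3\}}$) differ from the paper's choice ($c_{\{2\}}$ fixed, $c_{\{1,3\}}=c_{\{2,3\}}$), so there is genuinely more than one pattern that works in low rank.

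But Step 3 has a real gap, and you correctly flag it as the crux. The sentence ``the remaining incidence relations among the $c_I$ can be arranged by an induction on $n$ that adjoins one suitably chosen line to a frame of $K^{n-1}$'' is doing all the work and is not established. The difficulty is not merely existence of frames with some prescribed Steinitz classes (which Proposition~\ref{proposition:quotientdesc} gives); it is that an apartment of $\Tits_n(K)$ has $n!$ chambers while your target apartment has only $2^{n-1}$, so the remaining $n!-2^{n-1}$ terms must cancel pairwise, and a priori those cancellations involve Steinitz classes $c_X$ for subsets $X$ that you have not constrained at all. The paper's resolution (Proposition~\ref{prop:foldedapartment}) only controls $[U_X]$ for $X=[k]$ and $X=[k{+}1]\setminus\{i\}$, obtained by building nested chains $B_k$ and $A_k^{(i)}$ with prescribed classes and then taking $I_k=\bigcap_{i<k}A_{k-1}^{(i)}$; the remaining cancellation is purely combinatorial, via the function $s_\sigma(k)=\max\sigma([k])$, a partition of $S_n$ into ``good'' permutations $\sigma_\e=(1\,2)^{\epsilon_1}\cdots(n{-}1\,\,n)^{\epsilon_{n-1}}$ hitting the $2^{n-1}$ target chambers, and a sign-reversing involution $\sigma\mapsto\sigma\cdot(x_\sigma\,\,y_\sigma)$ on the bad ones (Claims~\ref{claim:trichotomy}--\ref{claim:dealbad}). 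Your proposed ``adjoin one line'' induction would need to control far more of the $c_X$ than is actually achievable, and without a concrete substitute for the good/bad analysis the cancellation is not secured. So the strategy is right, but the delicate combinatorial core of the argument is missing.
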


Theorem~\ref{maintheorem:nonvanishing} is classical for $n=2$; see
Example~\ref{ex:hyperbolic3manifolds}. For $n\geq 3$ the result and method of proof are new.  The
analogue of Theorem \ref{maintheorem:nonvanishing} for $\GL_n \O_K$ holds in some cases but not in others; see the forthcoming paper \cite{PutmanStudenmund} for more details.

\begin{remark}
The cohomology of arithmetic groups such as $\SL_n\O_K$ has been fruitfully studied via the theory of automorphic forms,
see e.g.\ \cite{FrankeMain,FrankeSchwermer}. However, it can be difficult to
extract concrete statements from them. For example, it is not clear how to see from \cite{FrankeMain} even Borel--Serre's
result that the rational cohomology of $\SL_n\O_K$ vanishes above its vcd, let alone the sharper vanishing of
Theorem~\ref{maintheorem:vanishing}.

Franke~\cite{FrankeTopological} has also given a more practicable description of the Hecke-trivial subspace of $H^*(\SL_n\O_K;\Q)$. 
For example, for a quadratic imaginary field $K=\Q(\sqrt{-d})$, it can be deduced from \cite[Theorem~9]{FrankeTopological} that $H^i(\SL_n\O_k;\Q)$ contains no Hecke-trivial cohomology for $i>\SLvcd-n+1$. 
But as Theorem~\ref{maintheorem:nonvanishing} shows, this cannot capture all the cohomology; indeed, the Hecke action on the classes we construct in Theorem~\ref{maintheorem:nonvanishing} will be twisted by nontrivial ideal class characters, and thus is not captured by this result of Franke.
\end{remark}
%
%

\subsection{Relationship between Integrality and Vanishing/Non-Vanishing}

Theorems~\ref{maintheorem:vanishing} (Vanishing) and~\ref{maintheorem:nonvanishing} (Non-Vanishing) relate to the Integrality Theorem (Theorem~\ref{thm:ashrudolph}) as follows.  While $\SL_n \O_K$ does not satisfy Poincar\'{e} duality, Borel--Serre~\cite{BorelSerreCorners} proved that it does satisfy \emph{Bieri--Eckmann duality} with \emph{rational dualizing module} $\St_n(K)$.  This means that for any 
$\Q\SL_n \O_K$-module $V$ and any $i\geq 0$ we have
 \[\HH^{\SLvcd-i}\big(\SL_n \O_K;\,V\big) \iso \HH_i\big(\SL_n \O_K;\, \St_n(K)\otimes V\big).\]
This implies in particular that 
\begin{equation}
\label{eq:dualizing}
\HH^{\SLvcd}(\SL_n \O_K;\Q) \iso \HH_0(\SL_n \O_K;\St_n(K)\otimes \Q) \iso (\St_n(K)\otimes\Q)_{\SL_n \O_K}.
\end{equation}
Equation~\eqref{eq:dualizing} converts the problem of computing $\HH^{\SLvcd}(\SL_n \O_K;\Q)$ to the problem of understanding the $\SL_n \O_K$-action on $\St_n(K)$. This understanding is precisely what is given by the Integrality Theorem, which
is applied directly to prove Theorem~\ref{maintheorem:vanishing} for $\SL_n \O_K$. The work of Borel--Serre also implies
that $\GL_n \O_K$ satisfies rational Bieri--Eckmann duality, with a dualizing module that is closely related
to $\St_n(K)$.  Using this, we deduce the $\GL_n \O_K$ case of Theorem~\ref{maintheorem:vanishing} from the
case of $\SL_n \O_K$.


As for Theorem~\ref{maintheorem:nonvanishing}, the projection $\Tits_n(K) \rightarrow \Tits_n(K)/\GL_n \O_K$
induces a map 
\begin{equation}
\label{eqn:provesurj}
\St_n(K) \rightarrow \widetilde{\HH}_{n-2}(\Tits_n(K)/\GL_n\O_K;\Z) \iso \Z^N\qquad\text{where } N=(\abs{\class(\O_K)} - 1)^{n-1};
\end{equation}
see Proposition~\ref{proposition:quotientdesc}.  
By \eqref{eq:dualizing}, this map rationally factors through $\HH^{\SLvcd}(\SL_n\O_K;\Q)$, so to prove Theorem~\ref{maintheorem:nonvanishing} it is enough to prove that the map \eqref{eqn:provesurj} is surjective.

\begin{remark}
One might think that using $\Tits_n(K)/\SL_n \O_K$ instead could give a stronger lower bound, 
but in fact one can show that $\Tits_n(K)/\SL_n\O_K \cong \Tits_n(K)/\GL_n\O_K$.  Since this would not lead to any improvement in the results, we will not prove it, or consider $\Tits_n(K)/\SL_n\O_K$ further.
\end{remark}

It turns out that the space $\Tits_n(K)/\GL_n\O_K$ is also a spherical building 
(see Proposition \ref{proposition:quotientdesc} below).
Proving that the map \eqref{eqn:provesurj} is surjective is difficult because the apartments of $\Tits_n(K)$ that generate $\St_n(K)$ are made up of $n!$ simplices, while the apartments of $\Tits_n(K)/\GL_n \O_K$ have only $2^{n-1}$ simplices.  Since $n!\gg 2^{n-1}$, most apartments of $\Tits_n(K)$ will be draped over a huge number of apartments in the quotient, and it is difficult to describe the resulting homology class. However, we show that by making careful choices, each apartment of $\Tits_n(K)/\GL_n \O_K$ can be lifted to a certain special apartment of $\Tits_n(K)$ for which
all of the remaining $n!-2^{n-1}$ simplices exactly cancel each other out, leaving the desired homology class.  This requires a delicate combinatorial argument.

\subsection{The complex \texorpdfstring{$\PartialBases_n(\O)$}{PBnO} of partial bases} 

The first step in our proof of Theorem~\ref{maintheorem:integralapartments} is to consider a certain ``integral model'' $\PB_n(\O)$ for the Tits building $\Tits_n(K)$. The \emph{complex of partial bases} $\PB_n(\O)$ is the simplicial complex whose maximal simplices (which have dimension $n-1$) are bases of $\O^n$.
Letting $C_{n-1}(\PartialBases_n(\O))$ be the $(n-1)$-chains of $\PartialBases_n(\O)$, we define in \S\ref{section:integralapartments}  an  \emph{integral apartment class map}
\[\phi\colon C_{n-1}(\PartialBases_n(\O))\to \St_n(K)\] 
sending a basis for $\O^n$ to the integral apartment class determined by the corresponding frame of $K^n$.  Theorem~\ref{maintheorem:integralapartments} is the assertion that $\phi$ is surjective.  We prove that $\phi$ is surjective as long as $\PB_n(\O)$ is as highly connected as possible.  Our last main theorem is that this connectivity does indeed hold. 

A $d$-dimensional complex is \emph{$d$-spherical} if it is $(d-1)$-connected, in which case it is homotopy equivalent to a wedge of $d$-spheres.   
 A simplicial complex $X$ is \emph{Cohen--Macaulay} (abbreviated CM) of dimension $d$ if:
\begin{enumerate}[itemsep=2.5pt,parsep=2.5pt,topsep=4pt]
\item $X$ is $d$-spherical, and 
\item for every $k$-simplex $\sigma^k$ of $X$, the link $\Link_X(\sigma^k)$
is $(d-k-1)$-spherical.
\end{enumerate}
\noindent The following theorem is the main technical result of this paper, and is of independent interest.  
\begin{maintheorem}[\textbf{\boldmath$\PartialBases_n(\O_S)$ is Cohen--Macaulay}]
\label{maintheorem:partialbasescm}
Let $\O_S$ be a Dedekind domain of arithmetic type. Assume that $\abs{S}>1$ and that $S$ contains a non-complex place.
Then $\PartialBases_n(\O_S)$ is CM of dimension $n-1$ for all $n \geq 1$.
\end{maintheorem}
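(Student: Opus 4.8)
The plan is to prove this by induction on $n$, using the standard fact that to establish the Cohen--Macaulay property of dimension $n-1$ it suffices to show that $\PB_n(\O_S)$ is $(n-2)$-connected and that the link of every vertex is Cohen--Macaulay of dimension $n-2$; since the link of a vertex $v$ (a unimodular vector) can be identified with the complex of partial bases of $\O^n/\langle v\rangle \cong \O^{n-1}$ (after choosing a complement, using that $\abs{\cl(\O_S)}$ plays no role here because unimodular vectors always split off a free summand), the induction on links reduces immediately to the statement for $n-1$. So the real content is the top-dimensional connectivity statement: $\PB_n(\O_S)$ is $(n-2)$-connected for all $n$. The base case $n=1$ is trivial ($\PB_1(\O)$ is two points, which is $(-1)$-connected, i.e.\ nonempty), and for the inductive step I would assume $\PB_m(\O_S)$ is $(m-2)$-connected for all $m<n$ and all such $\O_S$.

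For the connectivity statement itself, I would build $\PB_n(\O_S)$ up from a simpler, more symmetric complex whose high connectivity is known, and transfer connectivity along the inclusion. A natural candidate is to compare $\PB_n(\O_S)$ with a Tits-building-like complex or with the complex of unimodular sequences; the key geometric input should be an analogue of the ``nerve'' or ``bad simplex'' arguments of Maazen, van der Kallen, and Charney, who proved homological stability results by establishing exactly this kind of connectivity for posets of unimodular sequences over rings satisfying a stable range condition. The technical heart will be verifying that $\O_S$ (under the hypotheses $\abs{S}>1$ with a non-imaginary place) satisfies the hypotheses needed to run such an argument in the range we want — roughly, that one can always extend a partial basis, or modify a unimodular vector by elements of an existing partial basis to bring it into ``good position'' relative to a fixed basis. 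This is where the arithmetic hypotheses on $S$ enter: the condition that $\O_S^\times$ is infinite and that $S$ has a non-imaginary place is precisely what makes the relevant strong approximation / surjective stability statements hold down to the critical rank, via the work of Bass--Milnor--Serre and Vaserstein on $\mathrm{SK}_1$ and stable rank of rings of $S$-integers.

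Concretely, the steps I would carry out are: (1) reduce to the connectivity statement via the link computation above; (2) set up an auxiliary complex $X_n$ (unimodular sequences, or partial bases that are ``standard'' with respect to a fixed reference basis) and a deformation-retraction-type or Quillen poset argument showing $X_n$ is highly connected over any Dedekind domain of arithmetic type, using the inductive hypothesis for links; (3) prove the key ``transitivity/exchange'' lemma: given a partial basis $\sigma$ of $\O_S^n$ and a unimodular vector $v$, there is a unimodular vector $v'$ with $v - v' \in \langle \sigma\rangle$ such that $\sigma \cup \{v'\}$ is again a partial basis — this is the step that uses the arithmetic of $\O_S$ and the hypothesis on $S$; (4) combine (2) and (3) via a standard ``bad simplices'' or ``link argument'' (in the style of Charney or of \S\,2 of van der Kallen) to conclude that $\PB_n(\O_S)$ is $(n-2)$-connected; (5) assemble (1) and (4) into the Cohen--Macaulay conclusion. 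I expect step (3) — the exchange lemma with the sharp rank bound, and isolating exactly which hypotheses on $\O_S$ make it work for all $n$ down to $n=2$ — to be the main obstacle, since over a general Dedekind domain one only gets a weaker statement and the class-group-free nature of the conclusion (contrast with Theorems A and B) has to be explained precisely here: unimodular vectors, unlike arbitrary lines, always generate free direct summands, so the class group never obstructs the combinatorics of $\PB_n(\O)$ even though it does obstruct the integrality of apartments in $\Tits_n(K)$.
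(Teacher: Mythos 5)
Your plan reproduces, at a high level, the Maazen--Van der Kallen template for proving connectivity of complexes of partial bases, with the arithmetic of $\O_S$ entering via an ``exchange lemma'' (your step (3)) that plays the role of a stable range condition. The paper explicitly addresses why this template does not work here, and the gap in your proposal is exactly the one it identifies. Two concrete problems:

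First, the inductive hypothesis you set up is too weak. You propose to induct on the statement ``$\PB_m(\O_S)$ is $(m-2)$-connected for $m<n$.'' But when you try to run a link argument (your steps (2) and (4)) in $\PB_n(\O_S)$, restricting attention to vectors whose $L$-coordinate is $0$ or $1$ and passing to the quotient $\O^n/V_\sigma$, you do not land in $\PB_{n-k}(\O_S)$ --- you land in a complex of vectors whose last coordinate is constrained modulo the ideal $J$ generated by the last coordinates of $\sigma$. Your inductive hypothesis says nothing about these relative complexes, so the induction breaks. The paper's key move is to strengthen the inductive statement to cover a whole family of complexes $\PB_n(I)$ indexed by ideals $I\subset\O$, and moreover to track full Cohen--Macaulayness (not just connectivity) in the inductive hypothesis, since the fibering arguments genuinely need CM of the base (see their Lemma \ref{lemma:fibersCMsplit} and Remark \ref{remark:trouble}). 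Without this strengthening there is no way to close the induction.

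Second, the ``exchange lemma'' you posit (given a partial basis $\sigma$ and a unimodular $v$, modify $v$ by $\langle\sigma\rangle$ to land in the link) is, as literally stated, false --- take $v\in\langle\sigma\rangle$ --- and more importantly, no lemma of this stable-range flavor can be the mechanism that makes the arithmetic hypotheses on $S$ do their job. The paper's Remark \ref{remark:vdkintro} and Theorem \ref{theorem:B2notconnected} explain why: all Dedekind domains satisfy Reiner's strengthened $SR_{2.5}$, yet $\PB_2(\O_S)$ can fail to be connected (e.g.\ $\O_S=\Z[\sqrt{-5}]$ or $\Z[\frac{1+\sqrt{-43}}{2}]$). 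So the arithmetic hypotheses cannot be doing their work through a local exchange statement. In the actual proof they enter only in the base case $n=2$ of the strengthened induction: connectivity of $\PB_2(I)$ is equivalent to elementary generation of the congruence subgroup $\Gamma_1(\O,I)$, and this is where Vaserstein--Liehl and Bass--Milnor--Serre's vanishing of $\mathrm{SK}_1(\O,I)$ are used. Crucially, this has to hold for \emph{all} ideals $I$, which is a strictly stronger input than elementary generation of $\SL_2(\O)$ itself --- the latter is what connectivity of $\PB_2(\O)$ alone would require, and it holds for a broader class of rings. Your proposal, by never introducing the complexes $\PB_n(I)$, cannot see this distinction and therefore cannot correctly isolate the role of the hypotheses on $S$.

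What you do get right: the reduction of the CM conclusion to the connectivity statement plus control of links, the observation that the class group is irrelevant because unimodular vectors always split off free summands, and the identification of Bass--Milnor--Serre and Vaserstein as the ultimate arithmetic input. But the architecture --- a single induction on $\PB_n(\O_S)$ with a Van der Kallen-style exchange lemma --- is the approach the paper rules out, and the indispensable new ideas (the relative complexes $\PB_n(I)$, the non-full ``good'' subcomplex $\PBg_n(I)$, and the ``fibered with core'' machinery) are all absent from the proposal.
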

Note that, unlike our other results, in Theorem~\ref{maintheorem:partialbasescm} we make no assumption about the class group $\class(\O_S)$.
Theorem~\ref{maintheorem:partialbasescm} applies to any number ring $\O_K$ possessing a real embedding $\O_K\into \R$ (with the exception of $\Z$), as well as to 
every $\O_S$ with $\abs{\O_S^\times}=\infty$ in which some prime $p$ is invertible.
Maazen~\cite{MaazenThesis} proved earlier that for any Euclidean domain $\O$, the complex of partial bases $\PB_n(\O)$ is CM of dimension $n-1$. 

The proof of Theorem~\ref{maintheorem:partialbasescm} is given in \S\ref{section:partialbasescm}.    It is a complicated inductive argument on the rank $n$. To make the induction work, we need to prove that a more general class of  ``complexes of $I$-bases'' relative to an ideal $I\subset \O$ are CM.  It is in proving this stronger statement in the base case $n=2$ that the specific arithmetic hypotheses on $\O_S$ are used. \change

\begin{remark}
\label{remark:vdkintro}
Resolving a conjecture of Quillen, Van der Kallen~\cite[Theorem~2.6]{VanDerKallenStability} 
proved that if a ring $R$ satisfies Bass's stable range condition $SR_d$, then $\PB_n(R)$ is $(n-d)$-connected. 
Any Dedekind domain $\O$ satisfies Bass's stable range condition $SR_3$, so by Van der Kallen's result it was known that $\PB_n(\O)$ 
is $(n-3)$-connected. However, Theorem~\ref{maintheorem:partialbasescm} includes the stronger assertion that $\PB_n(\O)$ 
is $(n-2)$-connected; therefore to prove Theorem~\ref{maintheorem:partialbasescm} requires us to go beyond these known results and show that $\PB_n(\O)$ is as highly connected as if $\O$ were a field or local ring!

Our proof of Theorem~\ref{maintheorem:partialbasescm} uses a stronger condition that can be thought of as ``$SR_{2.5}$'' which Reiner proved holds for all Dedekind domains.  One might hope that Theorem~\ref{maintheorem:partialbasescm} could be proved by an argument mimicking \cite{VanDerKallenStability} by simply substituting this ``$SR_{2.5}$'' condition in place of $SR_3$. But this cannot be so, because the story is more complicated: in Theorem~\ref{theorem:B2notconnected} below we 
show that Theorem~\ref{maintheorem:partialbasescm} is \emph{not true} for all Dedekind domains, nor even those of arithmetic type.  New ideas are needed. See \S\ref{section:expertoverview} for a high-level discussion of the difficulties that arise in extending Van der Kallen's results to this situation, and an explanation of where the hypotheses of Theorem~\ref{maintheorem:partialbasescm} come from.
 \end{remark}


\parasmallskip{Outline of paper}
In \S\ref{section:partialbasescm} we prove  that $\PB_n(\O)$ is CM for appropriate $\O$ (Theorem~\ref{maintheorem:partialbasescm}).   We apply this result in \S\ref{section:integralapartments} to the integral apartment class map to prove the Integrality Theorem (Theorem~\ref{maintheorem:integralapartments}).    This result is applied in \S\ref{section:vanishing} to prove the Vanishing Theorem (Theorem~\ref{maintheorem:vanishing})
for the top degree cohomology of $\SL_n\O_K$ and $\GL_n\O_K$.    The Non-Vanishing Theorem (Theorem~\ref{maintheorem:nonvanishing}) and the  Non-Integrality Theorem (Theorem~\ref{maintheorem:nonintegrality}) are proved in \S\ref{section:prelimnonvanishing}.

\parasmallskip{Acknowledgments} We are grateful to two anonymous referees for their incredibly 
careful readings of this paper, which greatly improved it.   In particular their advice was invaluable for the overview in \S\ref{section:expertoverview}.

\section{\texorpdfstring{$\PartialBases_n(\O_S)$}{Bn(O)} is CM of dimension \texorpdfstring{$n-1$}{n-1}}
\label{section:partialbasescm}

In this section we prove Theorem~\ref{maintheorem:partialbasescm}.  We also prove the following contrasting result.

\begin{theorem}
\label{theorem:B2notconnected}
Let $\O_S$ belong to one of the families of Dedekind domains of arithmetic type:
\begin{enumerate}[topsep=4.5pt,itemsep=4.5pt,parsep=2.5pt]
\item $K=\Q(\sqrt{d})$ and $\O_S=\O_K$ for $d<0$ squarefree with $d\not\in \{-1,-2,-3,-7,-11\}$.
\item $[K: \F_q(T)]<\infty$ with $q$ odd, $S=\{\deg_f\}$, and either $\abs{\class(\O_S)} \neq 1$ or $\deg f\geq 2$.
\end{enumerate}
Then the $(n-1)$-dimensional complex $\PartialBases_n(\O_S)$ is not Cohen--Macaulay for any $n\geq 2$.
\end{theorem}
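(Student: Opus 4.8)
The plan is to reduce the statement, uniformly in $n \ge 2$, to the single assertion that the graph $\PB_2(\O_S)$ is disconnected, and then to extract this from classical facts about $\GL_2$ of the rings in question. \emph{Step 1: reduction to $n=2$.} For any Dedekind domain $\O$ and any $n \ge 2$, consider the standard $(n-3)$-simplex $\sigma = \{e_1,\dots,e_{n-2}\}$ of $\PB_n(\O)$ (with the convention $\sigma = \emptyset$, $\Link(\emptyset) = \PB_n(\O)$, when $n=2$). Reduction modulo the free direct summand $\langle e_1,\dots,e_{n-2}\rangle$ carries a partial basis of $\O^n$ containing $e_1,\dots,e_{n-2}$ to a partial basis of $\O^n / \langle e_1,\dots,e_{n-2}\rangle \iso \O^2$, hence defines a simplicial map $p\colon \Link_{\PB_n(\O)}(\sigma) \to \PB_2(\O)$. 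I would show $p$ induces a bijection on $\pi_0$: it is surjective on vertices (any unimodular $\bar v \in \O^2$ extends to a basis $\{\bar v, \bar w\}$, and for any lifts $v,w \in \O^n$ the set $\{e_1,\dots,e_{n-2},v,w\}$ is a basis of $\O^n$ --- a basis of a free summand together with lifts of a basis of the quotient is a basis of the whole module --- so $v \in \Link_{\PB_n(\O)}(\sigma)$ with $p(v) = \bar v$); and the same remark shows that whenever $\{\bar v, \bar w\}$ is an edge of $\PB_2(\O)$, \emph{every} choice of lifts $v, w$ is an edge of $\Link_{\PB_n(\O)}(\sigma)$. Combining these, any $\PB_2(\O)$-path from $p(v)$ to $p(w)$ lifts --- with prescribed endpoints $v$ and $w$ --- to a path in $\Link_{\PB_n(\O)}(\sigma)$, so $p$ is also injective on $\pi_0$. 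Consequently, if $\PB_2(\O_S)$ is disconnected then so is $\Link_{\PB_n(\O_S)}(\sigma)$: for $n=2$ this says $\PB_2(\O_S)$ is not $1$-spherical, violating part (1) of the CM condition; for $n \ge 3$ it says the link of the $(n-3)$-simplex $\sigma$ is not $1$-spherical, violating part (2) (with $d-k-1 = (n-1)-(n-3)-1 = 1$). Either way $\PB_n(\O_S)$ is not Cohen--Macaulay, for every $n \ge 2$ at once.

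\emph{Step 2: when is $\PB_2(\O)$ connected?} Using the elementary moves $\{v,w\} \mapsto \{v, w+av\}$, $\{v,w\}\mapsto\{uv,w\}$ ($u \in \O^\times$), and $\{v,w\}\mapsto\{w,v\}$ on edges of $\PB_2(\O)$, together with the fact that $\GL_2\O$ acts on $\PB_2(\O)$ by simplicial automorphisms, one checks that the connected component of $e_1$ is exactly the set of first columns of matrices in $\mathrm{GE}_2(\O) \coloneq \langle E_2(\O),\ \text{invertible diagonal matrices} \rangle$. Since $\GL_2\O$ acts transitively on unimodular vectors with $\mathrm{Stab}(e_1) \subseteq \mathrm{GE}_2(\O)$, this yields: $\PB_2(\O)$ is connected if and only if $\mathrm{GE}_2(\O) = \GL_2(\O)$, equivalently (via the Whitehead identity $\mathrm{diag}(u,u^{-1}) \in E_2(\O)$) if and only if $E_2(\O) = \SL_2(\O)$, i.e.\ if and only if $\O$ is a $\mathrm{GE}_2$-ring in Cohn's sense.

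\emph{Step 3: the arithmetic input, and the main obstacle.} For family (1), the required non-$\mathrm{GE}_2$-ness is precisely Cohn's theorem that the ring of integers of $\Q(\sqrt{d})$ is a $\mathrm{GE}_2$-ring only for $d \in \{-1,-2,-3,-7,-11\}$; hence $\PB_2(\O_K)$ is disconnected for all other squarefree $d < 0$. For family (2), the hypotheses already force $\abs{\cl(\O_S)} > 1$ (a closed point of degree $\deg f$ contributes a $\Z/(\deg f)\Z$ quotient of the Picard group, so $\deg f \ge 2$ alone gives a nontrivial class group), and here I would invoke the function-field counterpart of Cohn's theorem applicable under these hypotheses --- which comes from the structure of $\SL_2(\O_S)$ acting on the Bruhat--Tits tree of $\SL_2$ over the local field at the place in $S$ (the quotient graph is not of the Nagao type, obstructing $E_2(\O_S) = \SL_2(\O_S)$), and is where the assumption ``$q$ odd'' enters. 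This last point --- pinning down the exact exceptional list, and in particular settling the function-field family with the precise hypothesis on $q$ --- is the step I expect to carry the real content; Steps 1 and 2 are routine, the only mild care being the lifting property in Step 1 that makes $p$ a $\pi_0$-bijection.
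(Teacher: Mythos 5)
Your strategy matches the paper's almost exactly: reduce to the disconnectedness of $\PB_2(\O_S)$ via the link of the $(n-3)$-simplex $\{e_1,\dots,e_{n-2}\}$, characterize connectedness of $\PB_2(\O)$ in terms of elementary generation, and invoke Cohn. In Step~1 you prove slightly more than you need --- the paper only observes that the quotient map $\pi\colon \Link(\sigma)\to \PB_W(\O_S)\iso\PB_2(\O_S)$ has a section $i$ (send a partial basis of $W=\langle e_{n-1},e_n\rangle$ to the same set of vectors), hence is surjective; a continuous surjection from a connected space has connected image, which already forces the link to be disconnected. Your $\pi_0$-injectivity is correct but superfluous. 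Your Step~2 characterization ``$\PB_2(\O)$ connected $\iff$ $\O$ is a $\mathrm{GE}_2$-ring'' is equivalent (via the Whitehead identity, as you note) to the paper's ``$\iff$ $\SL_2\O=E_2(\O)$,'' which the paper extracts from the $I=\O$ case of its Proposition~\ref{prop:basecasen2}.

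The genuine gap is Step~3, family~(2), which you acknowledge. You guess that a Bruhat--Tits tree / Nagao-type argument is needed, but this is not how the paper (or, more to the point, the literature) settles it. Cohn's original paper already covers the function-field case directly: \cite[Corollary~5.6]{Cohn} handles $\abs{\class(\O_S)}\neq 1$ and \cite[Theorem~6.2]{Cohn} handles $\deg f\geq 2$, both showing $\GL_2\O_S\neq\mathrm{GE}_2(\O_S)$ under exactly these hypotheses (this is also where ``$q$ odd'' enters --- it is a hypothesis in Cohn's Theorem~6.2, not something you rederive). So the missing ingredient is simply the citation, not a new construction. As a side remark, your observation that $\deg f\geq 2$ already forces $\abs{\class(\O_S)}>1$ is correct (the degree map on divisor classes is surjective onto $\Z$ by F.~K.~Schmidt's theorem, so $\class(\O_S)$ surjects onto $\Z/\deg f$), so the two sub-cases of family~(2) are not logically independent; the paper lists them separately because it cites two distinct results of Cohn, one for each.
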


\begin{remark}
In the second case of the theorem, $\deg_f$ is the discrete valuation determined by a polynomial $f\in \F_q[T]$; in this case one often writes $\O_f$ rather that $\O_{\{\deg_f\}}$.
\end{remark}

In particular, Theorem~\ref{theorem:B2notconnected} implies that $\PB_2(\O_S)$ is not connected, so Van der Kallen's result 
is sharp. For example, neither $\PB_2(\Z[\sqrt{-5}])$ nor $\PB_2(\Z[\frac{1+\sqrt{-43}}{2}])$ is connected, even though both are Dedekind domains and $\Z[\frac{1+\sqrt{-43}}{2}]$ is even a PID.
Similarly, for $K=\F_3(T)$ and $f={T^2+1}$ we have $\O_f\iso \F_3[X,\sqrt{X-X^2}]$, so $\PB_2(\F_3[X,\sqrt{X-X^2}])$ is not connected. 
Theorem~\ref{theorem:B2notconnected} is proved in \S\ref{section:step1}.

\para{Outline}
We begin in \S\ref{section:expertoverview} with a high-level  overview of the proof of Theorem~\ref{maintheorem:partialbasescm}; this overview is separate from the actual proof, and can be skipped if desired, but for both experts and non-experts it should be useful in understanding the structure of the proof. 
In \S\ref{section:basics} we summarize basic properties of Dedekind domains and Cohen--Macaulay complexes.

We prove Theorem~\ref{maintheorem:partialbasescm} by induction, but to do this we must strengthen the inductive hypothesis to a stronger result (Theorem~\ref{thm:BnICM}) applying to a more general family of simplicial complexes; we introduce these in \S\ref{section:partialibases}.
The body of the proof then occupies \S\ref{section:step1}, \S\ref{section:step2}, \S\ref{section:step3}, \S\ref{section:step4}, and \S\ref{section:BnICM}.

\subsection{A bird's-eye view of the proof of Theorem~\ref{maintheorem:partialbasescm}}
\label{section:expertoverview}
In this subsection we give an overview of our approach to Theorem~\ref{maintheorem:partialbasescm}, including a comparison with related classical results and an explanation of the differences that arise in our situation. This overview is written at a higher level than the proof itself, which is given in \S\ref{section:partialibases}, \S\ref{section:step1}, \S\ref{section:step2}, \S\ref{section:step3}, \S\ref{section:step4}, and \S\ref{section:BnICM}.  It may be especially useful for experts who are already familiar with past results on complexes of partial bases, such as the work of Quillen--Wagoner \cite{Wagoner} or Van der Kallen \cite{VanDerKallenStability}. Nevertheless, the overview should be useful for any reader in understanding the big-picture structure of the argument.  We have kept the notation in this section consistent with the remainder of the paper.  For simplicity, some of the definitions in this subsection may be missing or imprecise; precise definitions of all terms used are given 
in the proofs themselves.

\begin{notation}All the connectivity bounds we obtain or desire in this overview are of the same form: if a complex is $(N+1)$-dimensional, our goal is always that it is $N$-connected, or in other words that it is \emph{spherical}. As a result, in this outline there is no need to keep track of the precise connectivity or dimensions that arise; instead, the reader is encouraged to think of ``spherical'' as meaning ``as highly connected as appropriate'', or simply ``nice''. To foster this, by an abuse of notation we will similarly say (in this section only) that the link of a simplex, or a map between complexes, is spherical if it has the appropriate connectivity.\footnote{The reader will have no need of the precise numerics, but for completeness: we say a link of a $(k-1)$-simplex in an $(N+1)$-dimensional complex is spherical if it is $(N-k)$-connected, and a map between $(N+1)$-dimensional complexes is spherical if the map is $N$-connected.} This will allow us to describe the structure of the argument without getting bogged down in numerics.
\end{notation}

\para{Approach to Theorem~\ref{maintheorem:partialbasescm}}
Our main goal is to prove by induction on $n$ that the complex of partial bases $\PB_n(\O)$ is spherical. We will defer the discussion of the base case until later, since certain details would be misleading at this point.

As a stepping-stone, we will make use of the full subcomplex $\PB_n(0)$ on vectors whose last coordinate is either 0 or 1.
The inductive hypothesis implies straightforwardly that $\PB_n(0)$ is spherical (by comparing with the full subcomplex on vectors whose last coordinate is 0, which is isomorphic to $\PB_{n-1}(\O)$). Therefore it suffices to show that the inclusion of the subcomplex $\PB_n(0)\to \PB_n(\O)$ is spherical.

\para{Classical approach}
It is sometimes possible to verify the connectivity of such an inclusion ``simplex-by-simplex'', in the following way. Say that the \emph{0-link} of a partial basis $\{v_1,\ldots,v_k\}$
in $\PB_n(\O)$ is its link \emph{inside} the subcomplex $\PB_n(0)$, i.e.\ the intersection of its link with this subcomplex. If one could verify that the 0-link of each partial basis $\{v_1,\ldots,v_k\}$
in $\PB_n(\O)$  is 
spherical, it would follow that the injection itself is spherical. 

We point out that the \emph{full} link in $\PB_n(\O)$ of a partial basis $\{v_1,\ldots,v_k\}$ is not so hard to understand. The quotient map from $\O^n$ onto $\O^n/\Span{v_1,\ldots,v_k}\iso \O^{n-k}$ induces a map from the link  of $\{v_1,\ldots,v_k\}$ to $\PB_{n-k}(\O)$. This map is not an isomorphism, but it ``fibers'' the former over the latter in a convenient way (cf.\ Def.~\ref{def:fibersCMsplit}), from which one can show that the former is spherical if and only if the latter is. In particular, we know by induction that all links in $\PB_n(\O)$ are spherical; however the question remains of whether the 0-links are spherical.

In the case of a local ring or semi-local ring, where the result was proved by Quillen~\cite[Proposition~1]{Wagoner} and Van der Kallen~\cite[Theorem~2.6]{VanDerKallenStability} respectively, this approach does indeed work: by relating with $\PB_{n-k}(\O)$, one can show that the 0-link of every partial basis is spherical, so the desired connectivity follows. 
However, for Dedekind domains this is simply \textbf{false}. There are two distinct problems that arise.

\para{Problem 1: some partial bases are bad}
The first problem is that even in the simplest cases, there are some partial bases for which there is no hope that their 0-links will be spherical.  Consider for example the vertex $v=(2,5)$ of $\PB_2(\Z)$: to be spherical its 0-link would need to be $(-1)$-connected, i.e.\ nonempty, but it is easy to check that its 0-link is \emph{empty} (the key property is that $2\notin \Z^\times +5\Z$).

To get around this, we single out a special class of partial bases whose 0-link has a better chance of having the appropriate connectivity: we say that a partial basis is \emph{good} if it is contained in some basis where some vector has last coordinate equal to 1 (Def.~\ref{def:good}). We warn the reader that the ``good subcomplex'' formed by these good partial bases is definitely \emph{not} a full subcomplex (Prop.~\ref{prop:charBprime}).

\para{Problem 2: strengthening the inductive hypothesis}
The other problem, even after passing to this good subcomplex,  is that our inductive hypothesis is no longer strong enough. The issue comes from the restriction on the last coordinate.
We saw above that the link of $\{v_1,\ldots,v_k\}$ fibers over $\PB_{n-k}(\O)$. We are now interested in the 0-link, which is the subcomplex including only those vectors whose last coordinate is 0 or 1. This subcomplex no longer fibers over $\PB_{n-k}(\O)$; most simplices in $\PB_{n-k}(\O)$ are inaccessible by such vectors. Indeed, if $J\subset \O$ is the ideal generated by the last coordinates of $v_1,\ldots,v_k$, the only vectors in $\PB_{n-k}(\O)$ in the image of the 0-link are those whose last coordinate is \emph{congruent} to $0$ or $1$ modulo $J$.

To deal with this, we introduce a new complex $\PB_m(I)$ defined relative to an ideal $I\subset \O$, consisting only of vectors whose last coordinate is $\equiv 0$ or $1\bmod{I}$ (with some additional conditions, see Def.~\ref{def:Ibasis}).
To continue the induction, we are forced to go back and strengthen our inductive hypothesis to the stronger claim (Thm~\ref{thm:BnICM}) that $\PB_m(I)$ is spherical for \emph{every} ideal $I\subset \O$.

In most cases this resolves the issue: the 0-link in $\PB_n(\O)$ fibers over $\PB_{n-k}(J)$, so the strengthened inductive hypothesis gives us the necessary connectivity.

However there is a further problem in one case. If $J=\O$ (a case that one might have expected to be \emph{easier}), although the 0-link does map to $\PB_{n-k}(\O)$, it is \emph{not} fibered in the same convenient way. To handle this, we are forced to introduce a weaker notion of ``fibered relative to a core'' (Def.~\ref{def:fibersCMsplit}); although this  condition is quite technical, it seems to be necessary to push the induction through in this case. We point out that this step is one of the main reasons that in Thm \ref{maintheorem:partialbasescm}/\ref{thm:BnICM} we have to prove that $\PB_n(\O)$ is Cohen--Macaulay, and not just highly connected (cf. Remark~\ref{remark:trouble}).

\para{Returning to the base case} Let us return to the discussion of the base case, which we set aside earlier. The key base case for us is  $n=2$, in which case Thm~\ref{thm:BnICM} states that the graph $\PB_2(I)$ should be \emph{connected} for any ideal $I$. The connectivity of this graph is directly related to the question of elementary generation for congruence subgroups of $\SL_2(\O)$ relative to the ideal $I$ (Prop.~\ref{prop:basecasen2}). Therefore we can use the work of Vaserstein--Liehl and Bass--Milnor-Serre on relative elementary generation to show that $\PB_2(I)$ is indeed connected; our conditions on $\O$ come directly from the hypotheses of Vaserstein--Liehl and Bass--Milnor-Serre.

We can now see why we deferred the base case until the end: if we had discussed it before introducing the complexes $\PB_2(I)$ and Thm~\ref{thm:BnICM}, it would have seemed that  the base case of Thm~\ref{maintheorem:partialbasescm} needs only the connectivity of $\PB_2(\O)$. But the connectivity of $\PB_2(\O)$ only needs  elementary generation for $\SL_2(\O)$ itself, rather than for its congruence subgroups; the former is a much easier condition, and holds for a broader class of Dedekind domains. In particular, one could not see from this perspective where the restrictions on $\O$ in Thm~\ref{maintheorem:partialbasescm}  come from.

\smallskip
The $n=2$ base case also plays another completely separate role in our proof: it turns out to be  key in the \emph{first} part of the inductive step, letting us show that we really can reduce to the ``good subcomplex'' discussed above. This is also where we use the ``$SR_{2.5}$'' condition mentioned in Remark~\ref{remark:vdkintro}.

\para{Structure of the proof} \S\ref{section:basics} summarizes elementary facts about Dedekind domains and Cohen--Macaulay complexes; experts can skip this section. 
In \S\ref{section:partialibases} we introduce the complexes $\PB_n(I)$ and state Theorem~\ref{thm:BnICM}, which includes Theorem~\ref{maintheorem:partialbasescm} as a special case.

The remainder of \S\ref{section:partialbasescm} is occupied by the proof of Theorem~\ref{thm:BnICM}, by induction on $n$.
In \S\ref{section:step1} we establish
the base cases $n \leq 2$.
In \S\ref{section:step2} we define a subcomplex $\PartialBasesGood_n(I) \subset \PartialBases_n(I)$
consisting of ``good simplices'' and prove that
$(\PB_n(I),\PBg_n(I))$ is $(n-2)$-connected.
In \S\ref{section:step3} we prove that $(\PBg_n(I),\PB_n(0))$ is $(n-2)$-connected.
In \S\ref{section:step4} we prove that $\PB_n(0)$ is $(n-2)$-connected.
In \S\ref{section:BnICM} we prove the connectivity of links, and
assemble these pieces to prove Theorem~\ref{thm:BnICM}.

\subsection{Dedekind domains and Cohen--Macaulay complexes}
\label{section:basics}

\subsubsection{Dedekind domains}

Let $\O$ be a Dedekind domain with field of fractions $K$. We recall some basic facts
about $\O$; for proofs see e.g.\ \cite{MilnorKtheory}.

\para{Projective modules}
Every submodule of a projective $\O$-module is 
projective. 
If $M$ is a finitely generated projective $\O$-module, the \emph{rank} of $M$ is defined by 
$\Rank(M)\coloneq \dim_K(M \otimes_\O K)$.  The following standard lemma summarizes
some properties of projective $\O$-modules.

\begin{lemma}
\label{lemma:summandsubspace}
Let $\O$ be a Dedekind domain with field of fractions $K$ and let $M$ be a finitely generated
projective $\O$-module of rank $n \geq 1$.
\begin{enumerate}[label={\normalfont (\alph*)},topsep=2pt,itemsep=1pt,parsep=1pt]
\item A submodule $U\subset M$ is a direct summand of $M$ if and only if $M/U$ is torsion-free.
\item If $U$ and $U'$ are summands of $M$ and $U\subset U'$, then $U$ is a summand of $U'$.
\item The assignment $U\mapsto U\tensor_\O K$ defines a bijection
\[\{\text{Direct summands of $M$}\} \longleftrightarrow \{\text{$K$-subspaces of $M\tensor_\O K$}\}\]
with inverse $V\mapsto V\cap M$.
\end{enumerate}
\end{lemma}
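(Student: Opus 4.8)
The plan is to derive all three parts from two classical facts about a Dedekind domain $\O$ (see \cite{MilnorKtheory}): every finitely generated torsion-free $\O$-module is projective, and localization at $S \coloneq \O \setminus \{0\}$ is exact, so that $- \otimes_\O K$ carries a submodule inclusion $U \subseteq M$ to an injection $U \otimes_\O K \hookrightarrow M \otimes_\O K$. Since $M$ is projective over a domain it is torsion-free, so we may regard $M \subseteq M \otimes_\O K$, and then $U \otimes_\O K$ is identified with the $K$-span of $U$ inside $M \otimes_\O K$; I will use this identification throughout, together with the fact recalled in the ``Projective modules'' paragraph above that submodules of projective $\O$-modules are projective (and finitely generated, since $\O$ is Noetherian).

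For part (a): if $U$ is a summand, writing $M = U \oplus W$ exhibits $M/U \iso W$ as a submodule of $M$, hence torsion-free. Conversely, if $M/U$ is torsion-free then it is finitely generated (a quotient of $M$), hence projective by the fact above, so the exact sequence $0 \to U \to M \to M/U \to 0$ splits and $U$ is a direct summand.

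For part (b): $U'$ is a finitely generated projective $\O$-module, being a submodule of $M$, so part (a) applies to it. By part (a) for $M$, the quotient $M/U$ is torsion-free, hence so is its submodule $U'/U$; applying part (a) to $U'$ and the submodule $U$ then shows that $U$ is a summand of $U'$. For part (c): by the identification above, $U \mapsto U \otimes_\O K$ sends a summand of $M$ to a $K$-subspace of $M \otimes_\O K$, and conversely, for a $K$-subspace $V$ the quotient $M/(V \cap M)$ embeds into the $K$-vector space $(M \otimes_\O K)/V$, hence is torsion-free, so $V \cap M$ is a summand by part (a). It remains to check the two composites are the identity. Starting from a summand $U$: the inclusion $U \subseteq (\text{$K$-span of }U) \cap M$ is clear, and if $x \in M$ lies in the $K$-span of $U$ then $sx \in U$ for some nonzero $s \in \O$, so $x$ represents a torsion element of $M/U$, which is torsion-free, forcing $x \in U$. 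Starting from a subspace $V$: the $K$-span of $V \cap M$ is visibly contained in $V$, and conversely each $v \in V$ can be written $v = m/s$ with $m \in M$ and $s \in \O \setminus \{0\}$ (since $M \otimes_\O K = S^{-1}M$), whence $m = sv \in V \cap M$ and $v = s^{-1}m$ lies in the $K$-span of $V \cap M$.

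I do not anticipate a genuine obstacle, since these are all standard facts about Dedekind domains; the only point requiring care is maintaining the identification of $U \otimes_\O K$ with the $K$-span of $U$ inside $M \otimes_\O K$ consistently across part (c), so that the phrase ``with inverse $V \mapsto V \cap M$'' is literally correct rather than correct up to canonical isomorphism.
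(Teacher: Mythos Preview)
Your proof is correct. The paper does not actually give a proof of this lemma; it is introduced as ``the following standard lemma'' with a general reference to \cite{MilnorKtheory} for background on Dedekind domains, and no argument is supplied. Your write-up supplies exactly the expected standard argument: reduce (a) to the fact that finitely generated torsion-free modules over a Dedekind domain are projective, deduce (b) by applying (a) inside $U'$, and prove (c) by checking both composites using the identification $M\otimes_\O K = S^{-1}M$. There is nothing to compare against, and nothing to fix.
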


\paranoskip{The class group}
The \emph{class group} $\class(\O)$ is the
set of isomorphism classes of rank $1$ projective $\O$-modules.  The tensor product endows $\class(\O)$ with the structure 
of an abelian group, which we write additively; the identity element is given by the free module $\O$.
For every rank $n$ projective $\O$-module $M$ there is a unique rank~1 projective $\O$-module $I\in \cl(\O)$ such 
that $M\iso \O^{n-1}\oplus I$.  We write $[M] = I\in \cl(\O)$ for this element.  For two finite rank 
projective $\O$-modules $M$ and $M'$, we have the identity $[M \oplus M'] = [M] + [M']$.  Two such 
modules $M$ and $M'$ are isomorphic if and only if $\Rank(M)=\Rank(M')$ and $[M]=[M']$.

\para{The complex of partial bases}
A \emph{partial basis} for $\O^n$ is a set $\{v_1,\ldots,v_k\}$ of elements of $\O^n$ that can be completed to a basis
$\{v_1,\ldots,v_k,v_{k+1},\ldots,v_n\}$ for $\O^n$.
As we mentioned in the introduction, the \emph{complex of partial bases} 
$\PartialBases_n(\O)$ is the simplicial
complex whose $(k-1)$-simplices are partial bases $\{v_1,\ldots,v_k\}$ for $\O^n$.
Steinitz proved in 1911 that when $\O$ is a Dedekind domain, a vector $v=(a_1,\ldots,a_n) \in \O^n$ lies 
in a basis for $\O^n$ if and only if $v$ is \emph{unimodular}, that is, the  coordinates $\{a_1,\ldots,a_n\}$ generate the 
unit ideal of $\O$; see \cite{ReinerUnimodular}.  The vertices of $\PB_n(\O)$ are therefore
the unimodular vectors in $\O^n$.

\begin{lemma}
Let $\O$ be a Dedekind domain.
\label{lemma:partialbasissub}
\begin{enumerate}[label={\normalfont (\alph*)},topsep=2pt,itemsep=1pt,parsep=1pt]
\item A subset $\{v_1,\ldots,v_k\}\subset \O^n$ is a partial basis of $\O^n$ if and only if $\spn(v_1,\ldots,v_k)$ 
is a rank $k$ direct summand of $\O^n$.
\item Let $M$ be a free summand of $\O^n$. A subset $\{v_1,\ldots,v_k\}\subset M$ is a partial basis of $M$ 
if and only if $\{v_1,\ldots,v_k\}$ is a partial basis of $\O^n$. 
\end{enumerate}
\end{lemma}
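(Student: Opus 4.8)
The plan is to deduce both parts directly from the structure theory of finitely generated projective modules over a Dedekind domain recalled above: submodules of projectives are projective, $\Rank$ is additive on direct sums, and $[M\oplus M']=[M]+[M']$ in $\cl(\O)$, together with Lemma~\ref{lemma:summandsubspace}.

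For part (a), one implication is immediate: if $\{v_1,\ldots,v_k\}$ extends to a basis $\{v_1,\ldots,v_n\}$ of $\O^n$, then $\O^n=\spn(v_1,\ldots,v_k)\oplus\spn(v_{k+1},\ldots,v_n)$ exhibits $U\coloneq\spn(v_1,\ldots,v_k)$ as a direct summand, and since an $\O$-basis of $\O^n$ is a $K$-basis of $K^n$ the vectors $v_1,\ldots,v_k$ are $K$-linearly independent, so $\Rank(U)=k$. For the converse, suppose $U\coloneq\spn(v_1,\ldots,v_k)$ is a rank-$k$ direct summand of $\O^n$. Being a submodule of the free module $\O^n$, $U$ is projective, so the surjection $\O^k\onto U$ sending $e_i\mapsto v_i$ splits; comparing ranks shows its kernel is a rank-$0$ submodule of $\O^k$, hence zero, so the surjection is an isomorphism and $\{v_1,\ldots,v_k\}$ is a basis of $U$. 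Writing $\O^n=U\oplus W$ and using $[U]=0=[\O^n]$ together with additivity of the class invariant gives $[W]=0$, while $\Rank(W)=n-k$; hence $W\iso\O^{n-k}$, and concatenating a basis of $U$ with a basis of $W$ yields a basis of $\O^n$ extending $\{v_1,\ldots,v_k\}$.

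For part (b), write $m=\Rank(M)$, so $M\iso\O^m$ and part (a) applies verbatim with $M$ in place of $\O^n$ (being a partial basis, a direct summand, or of rank $k$ are all isomorphism-invariant). If $\{v_1,\ldots,v_k\}\subset M$ is a partial basis of $M$, then by part (a) applied to $M$ the span $U\coloneq\spn(v_1,\ldots,v_k)$ is a rank-$k$ summand of $M$; a summand of a summand of $\O^n$ is again a summand of $\O^n$ (compose complements), so $U$ is a rank-$k$ summand of $\O^n$, and part (a) applied to $\O^n$ shows $\{v_1,\ldots,v_k\}$ is a partial basis of $\O^n$. Conversely, if $\{v_1,\ldots,v_k\}\subset M$ is a partial basis of $\O^n$, then by part (a) the span $U\coloneq\spn(v_1,\ldots,v_k)$ is a rank-$k$ summand of $\O^n$; since $U\subset M$ with $U$ and $M$ both summands of $\O^n$, Lemma~\ref{lemma:summandsubspace}(b) makes $U$ a (rank-$k$) summand of $M$, and part (a) applied to $M$ shows $\{v_1,\ldots,v_k\}$ is a partial basis of $M$.

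The argument is essentially bookkeeping, and I do not expect a serious obstacle; the one point requiring a little care — and the only place where the Dedekind hypothesis enters beyond ``submodules of projectives are projective'' — is the converse of part (a): that a rank-$k$ direct summand generated by $k$ vectors is automatically \emph{free} on those vectors, and that its complement in $\O^n$ is automatically free. Both statements can fail over a general commutative ring, and here they rest respectively on rank-counting (valid since $\O$ is a domain) and on the additivity of the class invariant in $\cl(\O)$.
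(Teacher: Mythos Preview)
Your proof is correct and follows essentially the same approach as the paper's: for (a) you show that a rank-$k$ summand generated by $k$ elements is free on those elements and that its complement $W$ is free via $[W]=0$, and for (b) you reduce to the characterization in (a) together with Lemma~\ref{lemma:summandsubspace}(b). The paper's proof is terser (it asserts without argument that a rank-$k$ projective generated by $k$ elements is free, whereas you spell out the splitting-plus-rank-count argument), but the substance is the same.
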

\begin{proof}
Set $U=\spn(v_1,\ldots,v_k)$.  If $\{v_1,\ldots,v_k\}$ is a partial basis for $\O^n$ then
$U$ is  a rank $k$ direct summand of $\O^n$.  We must prove the converse, so assume that $U$ is a
rank $k$ direct summand of $\O^n$.  Since $U$ is a rank $k$ projective module generated by $k$ elements,
it must be free.  Write $\O^n = U \oplus W$.  Since $[W]=[\O^n]-[U]=0-0=0$, we have $W\iso \O^{n-k}$.  Adjoining a basis 
for $W$ then yields a basis for $\O^n$, so $\{v_1,\ldots,v_k\}$ is a partial basis and (a) follows.
Part (b) follows from the characterization in part (a) together with Lemma~\ref{lemma:summandsubspace}(b).
\end{proof}

\subsubsection{Cohen--Macaulay complexes}
A space is {\em $(-1)$-connected} if and only if it is nonempty; every space is $\ell$-connected for $\ell \leq -2$.
A simplicial complex $X$ is {\em $d$-dimensional} if it contains a $d$-simplex but no $(d+1)$-simplices.  It is {\em $d$-spherical}
if it is $d$-dimensional and $(d-1)$-connected.
Finally, $X$ is Cohen--Macaulay (CM) of dimension $d$ if $X$ is $d$-spherical
and for every $k$-simplex $\sigma^k$ of $X$, the link $\Link_X(\sigma^k)$ is $(d-k-1)$-spherical.

\para{Low dimensions}
A simplicial complex $X$ is CM of dimension 0 if $X$ is 0-dimensional.   $X$ is CM of dimension 1 if $X$ is 1-dimensional and connected.

\para{Links are CM}
If $X$ is CM of dimension $d$, then $\Link_X(\sigma^k)$ is CM of dimension $d-k-1$ for all $\sigma^k\in X$ (where the definition requires only $(d-k-1)$-spherical); see \cite[Proposition~8.6]{QuillenPoset}.  We remark that \cite[Proposition~8.6]{QuillenPoset} concerns CM posets
rather than CM simplicial complexes; the desired result for simplicial complexes is
obtained by considering the poset of simplices of the simplicial complex.  Here we
are using the fact that a simplicial complex is CM if and only if its poset of simplices
is, which follows from \cite[(8.5)]{QuillenPoset}. 

\para{A sufficient condition to be CM}
If a $d$-dimensional simplicial complex $X$ has the property that every simplex is contained in a $d$-simplex, then $\Link_X(\sigma^k)$ is $(d-k-1)$-dimensional for all $\sigma^k\in X$.  Therefore, to prove that $X$ is CM of dimension $d$, it suffices to show that $X$ is $(d-1)$-connected, and that for every $k$-simplex $\sigma^k\in X$ the link $\Link_X(\sigma^k)$ is $(d-k-2)$-connected. 

\subsection{The complex \texorpdfstring{$\PartialBases_n(I)$}{PBnI} of partial \texorpdfstring{$I$-bases}{I-bases}}
\label{section:partialibases}

One of the key insights in our proof of Theorem~\ref{maintheorem:partialbasescm} is
that to induct on $n$, one must strengthen the inductive hypothesis by 
working with a more general family of complexes.

\begin{definition}[{\bf Partial \boldmath$I$-basis}]
\label{def:Ibasis}
Fix a surjection $L\colon \O^n \onto \O$.  Given an ideal $I \subset \O$, a basis $\{v_1,\ldots,v_n\}$
for $\O^n$ is called  an \emph{$I$-basis} if \[L(v_i)\equiv 0\bmod{I}\text{ or }L(v_i)\equiv 1\bmod{I}\qquad \text{for all }1 \leq i \leq n.\]
A  partial basis is called a \emph{partial $I$-basis} if it can be completed to an
$I$-basis.  The \emph{complex of partial $I$-bases}, denoted $\PartialBases_n(I)$,
is the complex whose $(k-1)$-simplices are partial $I$-bases $\{v_1,\ldots,v_k\}$.
\end{definition}

\begin{remark}
\label{remark:nonuniqueBnI}
The definition of $\PartialBases_n(I)$ depends on the choice of $L$.  However, if
$L'\colon\O^n \onto \O$ is a different surjection, then both $\ker L$ and $\ker L'$ are projective summands with $\Rank(\ker L)=\Rank(\ker L')=n-1$ and $[\ker L]=[\ker L']=[\O^n]-[\O]=0\in \cl(\O)$. Therefore there exists an isomorphism $\Psi\colon \O^n \to \O^n$
such that $L = L' \circ \Psi$. The resulting isomorphism $\PB_n(\O)\xrightarrow{\iso}\PB_n(\O)$ induces an isomorphism between the complexes
$\PartialBases_n(I)$ defined with respect to $L$ and $L'$.  Because of this, we will not concern ourselves
with the choice of $L$ except when it is necessary to clarify our proofs.
\end{remark}

We record some observations regarding $I$-bases and partial $I$-bases.
\begin{enumerate}[(a),topsep=2pt,itemsep=2pt,parsep=2pt]
\item 
When $I = \O$, the condition of Definition~\ref{def:Ibasis} is vacuous, so $\PartialBases_n(I) = \PartialBases_n(\O)$.
\item If $I\subset J$ then any $I$-basis is a $J$-basis by definition.  Therefore any 
partial $I$-basis is a partial $J$-basis, so $\PartialBases_n(I)$ is a subcomplex of $\PartialBases_n(J)$. In particular, $\PB_n(I)$ is a subcomplex of $\PB_n(\O)$.
\item We allow the case $I=(0)$, in which case we simplify notation by writing $\PartialBases_n(0)$ 
for $\PartialBases_n(I)$.  The subcomplex $\PartialBases_n(0)$ is contained in $\PartialBases_n(I)$ for any ideal 
$I$ since $(0)\subset I$.  We will see in the proof of Proposition~\ref{prop:BprimeB0} below that
$\PartialBases_n(0)$ is the full subcomplex\footnote{Recall that a subcomplex $L$ of a simplicial complex $K$ is {\em full} if the simplices of $L$ are precisely those simplices of $\sigma\subset K$ whose vertices lie in $L$; in particular, a full subcomplex $L$ is determined by its vertices.} of $\PartialBases_n(\O)$ on the unimodular vectors 
$v\in \O^n$ which satisfy $L(v)=0$ or $L(v)=1$.
\change 
\item A partial basis $\{v_1,\ldots,v_k\}$ may not be a 
partial $I$-basis even if $L(v_i)$ is congruent to $0$ or $1$ modulo $I$ for all $1 \leq i \leq k$.
For example, let $\O=\Z$, let $I=5\Z$, and take $L\colon\Z^2\onto\Z$ to be $L(b,c)=c$. 
The vector $v\coloneq (2,5)\in \Z^2$ determines a partial basis $\{v\}$ since 
$\{(2,5),(1,3)\}$ is a basis for $\Z^2$.  However, although $L(v)=5\equiv 0\bmod I$, 
the partial basis $\{v\}$ is not a partial $I$-basis.  Indeed, any basis $\{(2,5),(b,c)\}$ 
must have $2c-5b=\pm1$, which implies that $c\equiv 2\text{ or }3\bmod{5\Z}$.  Therefore 
$v$ is not contained in any $I$-basis for $\Z^2$, so $\{v\}$ is not a vertex of $\PB_2(I)$.
\item A consequence of the previous paragraph is that  for $n\geq 3$ the complex of partial $I$-bases $\PartialBases_n(I)$ is \emph{not} a full subcomplex of 
$\PartialBases_n(\O)$ in general. With $\O=\Z$ and $I=5\Z$ as before, take $L\colon\Z^3\onto\Z$ to be $L(a,b,c)=c$. Using the additional room in $\Z^3$, the vector $v\coloneq (0,2,5)$ can be extended to an $I$-basis such as $\{(0,2,5),(3,1,0),(-1,0,1)\}$.
The same is true of the vector $w\coloneq (1,2,5)$ so both $v$ and $w$ belong to $\PB_3(I)$. Since $\{v,w,(0,1,3)\}$ is a basis, the vertices $v$ and $w$ determine an edge of $\PB_3(\O)$. But they do not form a partial $I$-basis, because as before any basis $\{(0,2,5),(1,2,5),(a,b,c)\}$ must have $2c-5b=\pm 1$. Thus this edge of $\PB_3(\O)$ is absent from $\PB_3(I)$, even though its endpoints do belong to $\PB_3(I)$.
\item In the previous two remarks, one might be concerned that $\Z$ does not satisfy the hypotheses of Theorem~\ref{maintheorem:partialbasescm}. But in both remarks we can obtain an example in $\O=\Z[\frac{1}{19}]$, which does satisfy  the hypotheses of Theorem~\ref{maintheorem:partialbasescm}, by taking the same vectors; the necessary property is simply that $2\notin \O^\times +5\O$, which still holds when $\O=\Z[\frac{1}{19}]$.
\end{enumerate}

\medskip
Our main result concerning the complexes $\PartialBases_n(I)$ is as follows.
\begin{theorem-prime}{maintheorem:partialbasescm}
\label{thm:BnICM}
 Let $\O_S$ be a Dedekind domain of arithmetic type.  Assume that $\abs{S}>1$ and $S$ contains a non-complex place.  Then $\PartialBases_n(I)$ is CM of dimension $n-1$ for any ideal $I \subset \O_S$.
\end{theorem-prime}

Theorem~\ref{maintheorem:partialbasescm} is the special case $I=\O_S$ of Theorem~\ref{thm:BnICM}. 

\subsection{Step 1: Base case, \texorpdfstring{$n\leq 2$}{n <= 2}}
\label{section:step1}

Our inductive proof of Theorem~\ref{thm:BnICM} has two base cases.  The first is easy.

\begin{proposition}
\label{prop:basecasen1}
For any Dedekind domain $\O$ and any ideal $I\subset \O$, the complex $\PB_1(I)$ is CM of dimension 0.
\end{proposition}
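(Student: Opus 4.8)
The plan is to unwind the definitions and observe that $\PB_1(I)$ is essentially trivial. Recall that a complex is CM of dimension $0$ precisely when it is $0$-dimensional (by the ``Low dimensions'' remark in \S\ref{section:basics}), i.e.\ it is nonempty and has no edges. Since $\PB_1(I)$ is a subcomplex of $\PB_1(\O)$, and the latter is $0$-dimensional (a partial basis of $\O^1$ has at most one element), $\PB_1(I)$ is automatically $0$-dimensional, so the only real content is to check that $\PB_1(I)$ is \emph{nonempty}.

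For nonemptiness, I would exhibit an explicit vertex. The vertices of $\PB_1(I)$ are the elements $v\in \O^1 = \O$ that can be completed to an $I$-basis of $\O$. But a basis of $\O^1$ is just a single element $v$ with $v\in \O^\times$ (equivalently, $v$ unimodular in the rank-$1$ case). Fix the surjection $L\colon \O^1 \onto \O$; since $\O^1=\O$, $L$ is multiplication by a unit, so after the harmless identification of Remark~\ref{remark:nonuniqueBnI} we may take $L=\mathrm{id}$. Then $v=1\in\O$ is a basis of $\O^1$, and $L(v)=1\equiv 1\bmod I$, so $\{1\}$ is an $I$-basis; hence $\{1\}$ is a vertex of $\PB_1(I)$, and $\PB_1(I)\neq\emptyset$.

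Finally, one should note that the link condition in the definition of CM is vacuous here: for a $0$-simplex $\sigma^0$ we would need $\Link(\sigma^0)$ to be $(0-0-1)=(-1)$-spherical, which by convention means nonempty of dimension $-1$, i.e.\ empty; and indeed in a $0$-dimensional complex every vertex link is empty. So both conditions of the definition hold, and $\PB_1(I)$ is CM of dimension $0$. There is no real obstacle in this proposition — it is purely a definition-chase, included to anchor the induction — so I would keep the write-up to a couple of lines.
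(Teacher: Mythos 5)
Your argument is correct and matches the paper's proof: both observe that the only content is nonemptiness, and both produce the vertex by finding the unique $v\in\O^1$ with $L(v)=1$ (the paper does this directly from $L$ being an isomorphism, while you first normalize $L$ to the identity via Remark~\ref{remark:nonuniqueBnI}, but this is the same idea). Your additional remarks on dimension and the vacuous link condition are just unwinding what the paper packages into the phrase ``CM of dimension $0$ $\iff$ $0$-dimensional.''
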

\begin{proof}
A surjection $L\colon \O^1\onto \O$ must be an isomorphism, so there exists a unique $v\in \O^1$ with $L(v)=1$. Since $\{v\}$ is an $I$-basis of $\O^1$ for any $I$, the complex $\PB_1(I)$ contains the 0-simplex $\{v\}$, and thus is 0-dimensional as desired.
\end{proof}

The second base case is more involved.   It is the key place where the arithmetic hypotheses on $\O_S$ enter into the proof of Theorem~\ref{thm:BnICM}.

\begin{proposition}
\label{prop:basecasen2}
Let $\O=\O_S$ be a Dedekind domain of arithmetic type. Assume that $\abs{S}>1$ and that $S$ does not consist solely of complex places. Then for any ideal $I\subset \O$, the complex $\PB_2(I)$ is CM of dimension 1.
\end{proposition}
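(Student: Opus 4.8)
The goal is to show $\PB_2(I)$ is CM of dimension $1$, which by the remarks in \S\ref{section:basics} on low dimensions means exactly that $\PB_2(I)$ is $1$-dimensional and connected. That $\PB_2(I)$ is $1$-dimensional and that every vertex lies in an edge is immediate from the definition (every vertex of $\PB_2(I)$ lies in some $I$-basis, which has two elements). So the entire content is \emph{connectivity} of the graph $\PB_2(I)$. The plan is to translate this into a group-theoretic statement about the action of a congruence subgroup of $\SL_2(\O)$, and then invoke known relative elementary generation results.

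\textbf{Key steps.} First I would fix the surjection $L\colon \O^2 \onto \O$ and identify the vertices of $\PB_2(I)$: a unimodular vector $v \in \O^2$ is a vertex iff $v$ can be completed to an $I$-basis, i.e.\ iff $v$ lies in a basis $\{v,w\}$ with $L(v), L(w) \in \{0,1\} \bmod I$. Second, I would pick a convenient basepoint edge, say $\{e_1, e_2\}$ where $L(e_1)=0$ and $L(e_2)=1$ (after applying the isomorphism of Remark~\ref{remark:nonuniqueBnI} we may assume $L$ is projection to the last coordinate), and observe that $\SL_2(\O)$ — or rather the subgroup preserving the $I$-basis condition, which is the congruence subgroup $\Gamma$ of matrices congruent mod $I$ to a matrix of the form $\left(\begin{smallmatrix}* & 0\\ 0 & *\end{smallmatrix}\right)$ or $\left(\begin{smallmatrix}0 & *\\ * & 0\end{smallmatrix}\right)$ — acts on $\PB_2(I)$, transitively on ordered $I$-bases up to the obvious ambiguities. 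Third, and this is the crux, I would show that the connectivity of $\PB_2(I)$ is equivalent to: the relevant congruence subgroup is generated by elementary matrices (together with the diagonal/permutation part needed to account for the $\{0,1\}$ choices). Concretely, two adjacent edges in $\PB_2(I)$ differ by an elementary transvection fixing one of the two vertices, so the graph is connected iff the group acting is generated by such transvections — this is precisely the statement that the congruence subgroup $E_2(\O, I) = \Gamma$ (relative elementary generation in rank $2$). Fourth, I would cite the theorems of Vaserstein--Liehl (for number fields / $S$-integers with infinitely many units and a non-imaginary place) and Bass--Milnor--Serre, whose hypotheses are exactly $\abs{S}>1$ and $S$ not all imaginary, to conclude that $SL_2(\O,I) = E_2(\O,I)$ for every ideal $I$, hence $\PB_2(I)$ is connected.

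\textbf{Main obstacle.} The hard part is step three: making precise the dictionary between paths in the graph $\PB_2(I)$ and factorizations of congruence-subgroup elements into elementary matrices, and checking that the \emph{relative} version (with the ideal $I$ and the $\{0,1\}$-condition on $L$-values) matches exactly the relative elementary subgroup $E_2(\O,I)$ appearing in the work of Vaserstein--Liehl and Bass--Milnor--Serre. One must be careful about the distinction between $E_2(\O,I)$ as the normal subgroup generated by $I$-elementary matrices versus the subgroup literally generated by them, and about the role of units (this is where $\abs{S}>1$ is genuinely used — with finitely many units, as for $\Z$ or $\F_q[T]$, the graph $\PB_2(I)$ can fail to be connected, matching Theorem~\ref{theorem:B2notconnected}). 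Once the reduction is set up correctly, the arithmetic input is a black box. I would also need to handle the degenerate small cases (e.g.\ $I = \O$, where $\PB_2(I) = \PB_2(\O)$ and connectivity just needs $\SL_2(\O) = E_2(\O)$; and $I = (0)$, where the condition $L(v) \in \{0,1\}$ literally pins down the last coordinate) separately but these follow from the same circle of ideas.
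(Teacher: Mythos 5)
Your plan matches the paper's proof closely: both reduce connectivity of $\PB_2(I)$ to elementary generation of a congruence subgroup of $\SL_2(\O)$ via a Bass--Serre fundamental-domain argument (Serre, \emph{Trees}), and both invoke Vaserstein--Liehl and Bass--Milnor--Serre to supply that generation. One detail to watch: the paper works with $\Gamma_1(\O,I)$ (matrices $\equiv \left(\begin{smallmatrix}\ast&\ast\\0&1\end{smallmatrix}\right)\bmod I$), whose action on the orbit $Z$ of the base edge is without inversions when $I \neq \O$, rather than the diagonal/anti-diagonal congruence group you propose whose anti-diagonal elements invert the base edge and would complicate the fundamental-domain lemma; the paper then shows separately that every vertex of $\PB_2(I)$ lies within distance $1$ of $Z$, using a determinant-rescaling step to handle exactly the ``obvious ambiguities'' you flag.
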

\begin{remark}
Our proof of Proposition~\ref{prop:basecasen2} could be shortened slightly if we were only interested in Proposition~\ref{prop:basecasen2}; we go a bit further here to simplify the proof of Theorem~\ref{theorem:B2notconnected} afterwards.
\end{remark}
\change
\begin{proof}[Proof of Proposition~\ref{prop:basecasen2}]
The complex $\PB_2(I)$ is $1$-dimensional, so it is enough to show that $\PB_2(I)$ is connected.
Since $L$ is surjective we can fix a basis $\{e_1,e_2\}$ for $\O^2$ such
that $L(e_1)=0$ and $L(e_2) = 1$. 
This is an $I$-basis, so it defines an edge $e = \{e_1,e_2\}$
of $\PartialBases_2(I)$.

Let $\Gamma_1(\O,I)$ be the group
\[\Gamma_1(\O,I) \coloneq \big\{\left(\begin{smallmatrix} a&b\\c&d \end{smallmatrix}\right) \in \SL_2 \O\,\big|\,\left(\begin{smallmatrix} a&b\\c&d \end{smallmatrix}\right) \equiv \left(\begin{smallmatrix} \ast&\ast\\0&1 \end{smallmatrix}\right)\bmod{I}\ \big\}.\]
Letting $\SL_2 \O$ act on $\O^2$ via the basis
$\{e_1,e_2\}$, the subgroup $\Gamma_1(\O,I)$ consists of the automorphisms preserving the composition $\O^2\overset{L}{\to}\O\onto \O/I$. \change 
 In particular, the $\Gamma_1(\O,I)$-orbit of $e$ consists of $I$-bases and thus is contained in $\PartialBases_2(I)$.  Let $Z\subset \PB_2(I)$ be the subgraph determined by this orbit.

If $I\neq \O$ then the action of $\Gamma_1(\O,I)$ on $Z$ is without inversions.  Indeed,
in this case vertices with $L(v)\equiv 0\bmod{I}$ cannot be exchanged with those with $L(v)\equiv 1\bmod{I}$.  
The edge $e$ is a fundamental domain for the action of $\Gamma_1(\O,I)$ on $Z$.  The $\Gamma_1(\O,I)$-stabilizers
of $e_1$ and $e_2$ are exactly the elementary matrices that lie
in $\Gamma_1(\O,I)$, so by a standard argument in geometric group theory
(see, e.g., \cite[Lemma I.4.1.2]{SerreTrees}) the complex $Z$ is connected if and only if the group $\Gamma_1(\O,I)$
is generated by elementary matrices. This argument does not apply verbatim when $I=\O$,
but it is easy to verify nevertheless that $Z$ is connected if and only if $\SL_2\O$ is generated by elementary matrices.

Vaserstein (see the Main Theorem of \cite{VasersteinGen}, and  Liehl~\cite{Liehl} for a correction) 
proved that if $\O=\O_S$ is a Dedekind domain of arithmetic type with $\abs{S}>1$, then for any ideal $I\subset \O$ the subgroup $E$ of $\Gamma_1(\O,I)$
generated by elementary matrices is normal, and the quotient
$\Gamma_1(\O,I)/E$ is isomorphic to the relative $K$-group $\text{SK}_1(\O,I)$.  We remark that
Vaserstein's result does not apply to $I=(0)$, but in this case $\Gamma_1(\O,I)$ consists solely of elementary matrices.
In their resolution of the Congruence Subgroup Problem, Bass--Milnor--Serre~\cite[Theorem~3.6]{BassMilnorSerre} proved that if $\O$ is a Dedekind domain of arithmetic type that is \emph{not} totally imaginary, then $\text{SK}_1(\O,I) = 0$ for all ideals $I\subset \O$. Our assumptions guarantee that both of these results apply, so $\Gamma_1(\O,I)$ is generated by elementary matrices and $Z$ is connected.

It is thus enough to show that the pair $(\PB_2(I),Z)$ is $0$-connected, i.e.\ that every vertex
$v\in \PB_2(I)$ can be connected by a path to a vertex in $Z$. By definition, any vertex $v\in\PB_2(I)$ 
is contained in an $I$-basis $\{v,w\}$ for $\O^2$.
We first show that if $L(v)\equiv 1 \bmod{I}$ then $v$ lies in $Z$.  

By possibly replacing $w$ by $w-v$, we can assume that $L(w)\equiv 0\bmod{I}$. There exists a unique $g\in \GL_2 \O$ such that $g\cdot e_1=w$ and $g\cdot e_2=v$. Let $u = (\det g)^{-1} w$.  The element $\gamma$ satisfying $\gamma\cdot e_1=u$ and $\gamma\cdot e_2=v$ lies in $\SL_2 \O$. Since $L(u)\equiv 0\bmod{I}$ and $L(v)\equiv 1\bmod{I}$, we see that $\gamma\in \Gamma_1(\O,I)$. Therefore $v$ lies in the $\Gamma_1(\O,I)$-orbit of $e_2$ and $v\in Z$.

If instead $L(v)\not\equiv 1\bmod{I}$, then since $\{v,w\}$ is an $I$-basis we must have $L(w)\equiv 1\bmod{I}$, so $w$ lies in $Z$. We conclude that every vertex of $\PB_2(I)$ is connected to $Z$ by a path of length at most $1$, as desired.
\end{proof}

Before moving on to Step 2, we take a moment to prove Theorem~\ref{theorem:B2notconnected}.
\begin{proof}[Proof of Theorem~\ref{theorem:B2notconnected}]
We first prove the theorem in the case $n=2$. 
We can break the hypotheses of Theorem~\ref{theorem:B2notconnected} into the following three cases, and Cohn proved in each that $\GL_2 \O_S$ is not generated by elementary matrices together with diagonal matrices: for the quadratic imaginary case see \cite[Theorem 6.1]{Cohn}, for the function field case with $\abs{\class(\O_S)}\neq 1$ see \cite[Corollary~
5.6]{Cohn}, and for the function field case with $\deg f\geq 2$ see \cite[Theorem 6.2]{Cohn}. Thus certainly $\SL_2\O_S$ cannot be generated by elementary matrices. We saw in the proof of Proposition~\ref{prop:basecasen2} that the 1-dimensional complex $\PB_2(\O_S)$ is connected if and only if $\SL_2 \O_S$ is generated by elementary matrices. Therefore $\PB_2(\O_S)$ is disconnected and not Cohen--Macaulay.

Now assume that $n\geq 3$. Choose a basis $\{e_1,\ldots,e_n\}$ for $\O^n$; let $\sigma^{n-3}=\{e_1,\ldots,e_{n-2}\}$ and $W=\langle e_{n-1},e_n\rangle$. Let $\PB_W(\O_S)$ denote the complex of partial bases of $W$; since $W\iso \O_S^2$ there is an isomorphism $\PB_W(\O_S)\iso \PB_2(\O_S)$. By the previous paragraph, $\PB_W(\O_S)$ is disconnected.

Consider the subcomplex $L\coloneq \Link_{\PB_n(\O_S)}(\sigma^{n-3})$. If $\PB_n(\O_S)$ were CM of dimension $n-1$ then $L$ would be $(n-1)-(n-3)-1=1$-spherical and therefore connected. 
Since $\{w_1,w_2\}$ is a basis of $W$ if and only if $\{e_1,\ldots,e_{n-2},w_1,w_2\}$ is a basis of $\O^n$, there
is a natural inclusion $i\colon \PB_W(\O_S)\into L$.  Let $\pi\colon \O^n\onto W$ be the projection with kernel $\langle e_1,\ldots,e_{n-2}\rangle$. Since $\{e_1,\ldots,e_{n-2},v_1,v_2\}$ is a basis of $\O^n$ if and only if $\{\pi(v_1),\pi(v_2)\}$ is a basis of $W$, the projection $\pi$ induces a simplicial map $\pi\colon L\to \PB_W(\O_S)$. By definition $i\colon \PB_W(\O_S)\into L$ is a section of this projection, so $\pi\colon L\to \PB_W(\O_S)$ is surjective. Since $\PB_W(\O_S)$ is disconnected, this shows that $L=\Link_{\PB_n(\O_S)}(\sigma^{n-3})$ is disconnected, so the $(n-1)$-dimensional complex $\PB_n(\O_S)$ is not Cohen--Macaulay.
\end{proof}

\begin{remark}
Other examples can be deduced from Geller~\cite{Geller}; for example $\PB_n(\O_f)$ is not Cohen--Macaulay for $\O_f\iso \F_2[X,Y]/(X^2+XY+Y^2+X)$, which occurs as $\O_f\subset \F_2(T)$ for $f=T^2+T+1$. In contrast, Vaserstein~\cite{VasersteinGen} proved that when $\abs{S}\geq 2$ the group $\SL_2\O_S$ is generated by elementary matrices, so in that case $\PB_2(\O_S)$ is indeed connected.
\end{remark}

\subsection{Step 2: \texorpdfstring{$(\PB_n(I),\PBg_n(I))$}{(Bn(I),Bn-good(I))} is \texorpdfstring{$(n-2)$-connected}{(n-2)-connected}}
\label{section:step2}

We begin by defining certain bases to be ``good bases''.

\begin{definition}
\label{def:good}
We say that a basis $\{v_1,\ldots,v_n\}$ for $\O^n$ is \emph{good} if $L(v_i)=1$ for at least one $1\leq i\leq n$. We say that $\{v_1,\ldots,v_k\}$ is a \emph{partial good $I$-basis} if it is contained in a good $I$-basis. We denote by $\PBg_n(I)$ the complex of partial good $I$-bases.
\end{definition}

The following proposition is the main result of this section.

\begin{proposition}
\label{prop:BBprime}
Let $\O$ be a Dedekind domain and let $I\subset \O$ an ideal. Assume that $\PB_2(I)$ is CM of dimension 1. Then for any $n\geq 3$ the pair $(\PB_n(I),\PBg_n(I))$ is $(n-2)$-connected.
\end{proposition}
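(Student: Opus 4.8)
The plan is to show that every map $S^k \to \PB_n(I)$ with $k \leq n-2$ can be homotoped (rel nothing) into $\PBg_n(I)$, and similarly for relative spheres, which is exactly the assertion that $(\PB_n(I), \PBg_n(I))$ is $(n-2)$-connected. I would run this by a standard simplicial approximation and induction-on-bad-simplices argument: triangulate the sphere and attempt to push it off the ``bad'' simplices --- those partial $I$-bases $\sigma = \{v_1, \ldots, v_k\}$ that are \emph{not} partial good $I$-bases --- one at a time, starting with the bad simplices of maximal dimension.

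\medskip

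First I would set up the link analysis. Given a bad simplex $\sigma$ appearing in our triangulated sphere, the relevant object is $\Link_{\PB_n(I)}(\sigma)$, and more precisely the subcomplex of that link consisting of simplices $\tau$ such that $\sigma * \tau$ is \emph{good}. To push the sphere off $\sigma$ using the usual ``bad simplex'' move, I need this ``good link'' to be highly connected --- at least $(n - 2 - \dim\sigma - 1)$-connected. The quotient map $\O^n \onto \O^n / \spn(\sigma) \iso \O^{n-k}$ carries $L$ to some surjection $\bar L \colon \O^{n-k} \onto \O$ (using that $\spn(\sigma)$ is a summand, by Lemma~\ref{lemma:partialbasissub}, and that $\ker \bar L$ still has trivial class), and it should identify $\Link_{\PB_n(I)}(\sigma)$ with something close to $\PB_{n-k}(J)$ where $J$ is the ideal generated by $I$ together with the values $L(v_1), \ldots, L(v_k)$. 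Since $\sigma$ is bad, none of the $L(v_i)$ equals $1$, so they are all $\equiv 0 \bmod I$, hence $J = I$; thus the link is essentially $\PB_{n-k}(I)$. Crucially, the condition that $\sigma * \tau$ be \emph{good} translates, under this quotient, to the condition that $\tau$ contain a vector with $\bar L$-value exactly $1$ --- i.e. the ``good link of $\sigma$'' corresponds to the good subcomplex $\PBg_{n-k}(I) \subset \PB_{n-k}(I)$. So by induction on $n$, $(\PB_{n-k}(I), \PBg_{n-k}(I))$ is $(n-k-2)$-connected, which (after checking the numerics $n - k - 2 \geq n - 2 - k - 1$, an equality) is exactly the connectivity I need to perform the pushing-off move at $\sigma$. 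The base case $k = \dim\sigma = 1$, i.e. bad edges, is where I would need $(\PB_2(I), \PBg_2(I))$ to be $(-1)$-connected, i.e. the good link of a bad vertex to be nonempty; this should come from the hypothesis that $\PB_2(I)$ is CM of dimension $1$ (hence connected), combined with a direct argument that a bad vertex of $\PB_2(I)$ still has a good neighbor.

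\medskip

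The hardest step --- and the one I expect to absorb most of the work --- is the interface between the quotient description of links and the notion of \emph{good}. There are two subtleties. First, $\PB_{n-k}(J)$ as I described it is defined with respect to the induced surjection $\bar L$, and one must be careful that the link of $\sigma$ is genuinely isomorphic to (not merely fibered over) the relevant complex; the fibering/``$CM$ split'' machinery (cf. Def.~\ref{def:fibersCMsplit}) is presumably what makes this rigorous, and I would need to invoke it to transfer connectivity. Second --- and this is the real subtlety flagged in the overview --- $\PBg_n(I)$ is \emph{not} a full subcomplex of $\PB_n(I)$ (Prop.~\ref{prop:charBprime}), so ``push the sphere off bad simplices'' is more delicate than the classical version: a simplex can fail to be good even when all its vertices are good, so I cannot just argue vertex-by-vertex. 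The correct move is to replace the open star of a maximal bad simplex $\sigma$ by a cone on (link of $\sigma$ joined with the boundary of $\sigma$) using a vertex drawn from the good link of $\sigma$ --- this strictly decreases the number of bad simplices of top bad-dimension without introducing new bad simplices of that dimension or higher, because any new simplex created lies in $\sigma' * (\text{good link of } \sigma')$ for faces $\sigma'$ of $\sigma$ and is therefore good by construction. I would need to verify carefully that the replacement does not create bad simplices --- this is the combinatorial heart of the proof and should be stated as a lemma about how goodness behaves under these elementary expansions.

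\medskip

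To summarize the order of operations: (1) reduce to pushing spheres (and relative spheres) of dimension $\leq n-2$ off the bad locus; (2) establish the quotient/fibering description of $\Link_{\PB_n(I)}(\sigma)$ for bad $\sigma$, identifying it with $\PB_{n-k}(I)$ and identifying its ``good part'' with $\PBg_{n-k}(I)$; (3) invoke the inductive hypothesis on rank to get the required connectivity of the good link, with the CM-dimension-$1$ hypothesis on $\PB_2(I)$ handling the base of this sub-induction; (4) perform the bad-simplex pushing-off move, verifying the no-new-bad-simplices property using the combinatorial behavior of goodness under expansion. Step (4), together with the non-fullness issue it must navigate, is the main obstacle.
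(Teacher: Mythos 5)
Your overall strategy---push bad simplices off one dimension at a time, coning each maximal bad simplex on a vertex drawn from its good link---has a fatal flaw at precisely the step you flag as the hardest, and it stems from the non-fullness issue you identify but do not actually resolve.  First, an observation that would have simplified your setup: by Proposition~\ref{prop:charBprime}(2), a $k$-simplex $\sigma\in\PB_n(I)$ is automatically good whenever $k<n-2$, so in the image of any map $D^i\to\PB_n(I)$ with $i\leq n-2$ the bad simplices occur only in dimension exactly $n-2$.  There is no descending chain of dimensions to induct over, and hence no inner induction on $n$ for this proposition.  But second, and fatally: for a bad $\sigma^{n-2}$, your ``good link'' is \emph{empty}.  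If there were $y\in\Link_{\PB_n(I)}(\sigma)$ with $\sigma*\{y\}$ good, then $\sigma\cup\{y\}$ would be a good $I$-basis (it is a top-dimensional simplex, hence a basis, and goodness for a basis means it \emph{is} a good basis), so $\sigma$ would be contained in a good $I$-basis and thus good, contradicting the assumption.  So there is no cone vertex to use, and the single-vertex coning move cannot be performed.  This cannot be repaired by the fibering machinery; the move itself does not exist.

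The paper's actual argument replaces the cone vertex with a \emph{path}, and this is exactly where the hypothesis that $\PB_2(I)$ is connected enters---in a way quite different from the ``base case'' role you envision.  Write $\sigma=\sigma_0\cup\{v\}$ with $L(v)\neq 0$ (and $L(v)\not\equiv 0\bmod I$ when $I_\sigma\not\subset I$).  Lemma~\ref{lemma:complement2} produces a $w$ with $\sigma\cup\{w\}$ an $I$-basis and $I_{\{v,w\}}=\O$; set $W=\spn(v,w)\iso\O^2$.  The complex $\PB_W(I)\subset\Link_{\PB_n(I)}(\sigma_0)$ is isomorphic to $\PB_2(I)$, hence connected, so one can find an embedded path $P$ in $\PB_W(I)$ from $v$ to some $y$ with $L(y)=1$.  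The join $\sigma_0*P$ is an embedded subdivided $(n-1)$-disc whose boundary is $\sigma\cup(\sigma_0*\{y\})\cup(\partial\sigma_0*P)$; every $(n-2)$-face of $\partial\sigma_0*\{p_i,p_{i+1}\}$ has $I_\tau=\O$ (since consecutive path vertices span $W$ and $L(W)=\O$), and $\sigma_0*\{y\}$ contains a vector with $L=1$, so everything on the boundary except $\sigma$ lies in $\PBg_n(I)$.  This is the homotopy rel $\partial\sigma$.  The ``disc over a path'' is the idea your proposal is missing, and it is not a variant of the cone move---it is genuinely a two-dimensional filling.

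A few further gaps in your sketch, for completeness.  The inference ``$\sigma$ bad $\Rightarrow$ no $L(v_i)=1$ $\Rightarrow$ all $L(v_i)\equiv 0\bmod I$, so $J=I$'' is broken at the second step: a vertex of $\PB_n(I)$ can have $L(v_i)\equiv 1\bmod I$ without $L(v_i)=1$, so $I_\sigma$ need not lie in $I$ and $J=I+I_\sigma$ need not equal $I$.  Also, the goodness of $\sigma*\tau$ (a condition on $L$ over $\O^n$) does not translate cleanly to a goodness condition on $\pi(\tau)$ in $\PB_W(J)$ (a condition on $L|_W$); the paper never needs such a translation for this proposition.  Finally, the rank induction and the fibering machinery you anticipate are genuinely used in the paper---but in Proposition~\ref{prop:BprimeB0} (Step 3), where the subcomplex in question \emph{is} full and Lemma~\ref{lemma:pushing} applies.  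Proposition~\ref{prop:BBprime} itself is proved with no induction on $n$ and no fibering, only the connectedness of $\PB_2(I)$.
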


\subsubsection{The link of an \texorpdfstring{$(n-2)$-simplex}{(n-2)-simplex}}
We need a few preliminary results before starting the proof of 
Proposition~\ref{prop:BBprime}. The first ingredient in the proof of Proposition~\ref{prop:BBprime} is the following lemma.
\begin{definition}
Given a simplex $\sigma = \{v_1,\ldots,v_k\}$ of $\PartialBases_n(\O)$, define $V_{\sigma}$ to be the direct summand of $\O^n$ with basis $\{v_1,\ldots,v_k\}$, and define $I_{\sigma}$ to be the ideal $L(V_{\sigma}) \subset \O$.
\end{definition}
\begin{lemma}
\label{lemma:complement2}
Let $n \geq 3$. Consider an $(n-2)$-simplex $\sigma^{n-2}\in\PartialBases_n(I)$, and let $v$ be a vertex of $\sigma$ with $L(v)\neq 0$.  If $I_\sigma \not\subset I$,
 assume additionally that $L(v) \equiv 1\bmod{I}$. Then there exists a vertex $w$ of $\Link_{\PartialBases_n(I)}(\sigma)$ such that
$I_{\{v,w\}} = \O$.
\end{lemma}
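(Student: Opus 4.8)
The goal is, given an $(n-2)$-simplex $\sigma=\{v_1,\dots,v_{n-1}\}$ of $\PB_n(I)$ and a chosen vertex $v$ of $\sigma$ with $L(v)\neq 0$, to find a vector $w$ such that $\sigma\cup\{w\}$ is still a partial $I$-basis (so $w$ is a vertex of $\Link_{\PB_n(I)}(\sigma)$) and such that $L$ restricted to $V_{\{v,w\}}=\spn(v,w)$ is surjective. I would start by reducing to the structure of $\O^n/V_\sigma$: since $\sigma$ is an $(n-2)$-simplex in $\PB_n(\O)$, $V_\sigma$ is a rank-$(n-1)$ direct summand of $\O^n$ (Lemma~\ref{lemma:partialbasissub}(a)), so $\O^n/V_\sigma$ is a rank-$1$ projective, and since $[V_\sigma]=0$ it is free: $\O^n/V_\sigma\iso\O$. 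Fix a generator, i.e.\ a vector $v_n$ completing $\sigma$ to a basis $\{v_1,\dots,v_n\}$ of $\O^n$. Every vertex $w$ of $\Link_{\PB_n(I)}(\sigma)$ is then of the form $w=u+cv_n$ with $u\in V_\sigma$ and $c\in\O^\times$ (the coefficient of $v_n$ must be a unit for $\sigma\cup\{w\}$ to be completable to a basis, by examining the quotient $\O^n/V_\sigma$), and conversely any such $w$ with $L(w)\equiv 0$ or $1\bmod I$ that together with $\sigma$ forms a partial $I$-basis will do.

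\textbf{Main steps.} First I would compute $I_\sigma=L(V_\sigma)$ and split into two cases according to whether $I_\sigma\subset I$ or not. In either case, write $L(v_n)=c_0\in\O$. Now I want to choose $w=v+ t\, v' + c v_n$ where $v'$ ranges over vertices of $\sigma$ other than $v$ (so that $\spn(v,w)$ contains $v$ and a unit multiple of $v_n$ modulo $V_\sigma$, hence has the right rank) — actually the cleanest choice is $w = v_j + c\,v_n$ for a suitable $j$ and a suitable $c$, but I need $L(\spn(v,w))=\O$. Since $v$ is already in the span, and $L(v)$ generates some ideal, the task is: choose $c\in\O^\times$ and a $V_\sigma$-component so that $L(w)$ together with $L(v)$ generate $\O$. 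Concretely, I would look for $w$ of the form $w = v_i + v + c v_n$ (or $w=v+cv_n$) with $L(w) = L(v_i)+L(v)+c\,c_0$; I need $(L(v),L(w))=\O$ and $L(w)\equiv 0$ or $1\bmod I$. Using that the last coordinates of the $v_i$ generate $I_\sigma$ and that $c$ can be any unit, the first condition can be arranged by choosing $c$ appropriately — this is essentially a stable-range / "$SR_{2.5}$"-style manipulation, or more elementarily: the pair $(L(v),c_0)$ generates $\O$ since $\{v_1,\dots,v_n\}$ is a basis and $L$ is surjective on all of $\O^n$, wait — one must be careful, $L(v)$ and $c_0$ need not generate $\O$ on their own. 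The honest approach: since $L\colon\O^n\onto\O$ is surjective and $\{v_1,\dots,v_n\}$ is a basis, the ideal $(L(v_1),\dots,L(v_n))=\O$; so $(I_\sigma, c_0)=\O$. Hence I can pick a unit $c$ and an element $z\in V_\sigma$ with $L(z+cv_n)=1$; but I need $L(v)$ to stay relevant — actually if $L(z+cv_n)=1$ then already $\spn(v, z+cv_n)$ has $L$-image $\O$, provided $z+cv_n$ is a valid vertex. So the real content is arranging $z+cv_n$ to be a vertex of $\Link_{\PB_n(I)}(\sigma)$: it needs $c\in\O^\times$ and $\sigma\cup\{z+cv_n\}$ a partial $I$-basis, which since $L(z+cv_n)=1$ is automatic once $c$ is a unit (completing with one more basis vector and adjusting by elements of $V_\sigma$ keeps everything an $I$-basis). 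The subtlety forcing the hypothesis "$L(v)\equiv 1\bmod I$ when $I_\sigma\not\subset I$" is exactly that when $I_\sigma\not\subset I$ we cannot freely modify the $V_\sigma$-part of $w$ without changing its class mod $I$, so we instead exploit $L(v)\equiv 1$ directly: take $w = v + c v_n$ with $c\in\O^\times$ chosen so that $(L(v), L(v)+c c_0)=\O$, i.e.\ $(L(v),cc_0)=\O$ — possible because $(L(v),c_0)$ together with the other generators give $\O$... I would need to track this carefully.

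\textbf{The hard part.} The genuine obstacle is the bookkeeping in the case $I_\sigma\not\subset I$: there I am not allowed to perturb $w$ by arbitrary elements of $V_\sigma$ (doing so can destroy the partial-$I$-basis condition), so I must produce $w$ with $I_{\{v,w\}}=\O$ using only the freedom of the unit $c$ and the hypothesis $L(v)\equiv 1\bmod I$. The key arithmetic input is that $\O$ is a Dedekind domain so $(L(v),c_0)$ can be "completed" — more precisely I expect to use that for a unimodular-type pair one can find a unit $c$ making $L(v)$ and $cc_0$ coprime, or to invoke a stable-range argument (this is presumably where Reiner's "$SR_{2.5}$" condition, flagged in Remark~\ref{remark:vdkintro}, or at worst the elementary fact that in a Dedekind domain two elements generating an invertible ideal can be adjusted to be coprime, enters). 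I would set up the case $I_\sigma\subset I$ first (the easy case, where I have full freedom to choose $w = z + c v_n$ with $L(z+cv_n)=1$ directly, using $(I_\sigma,c_0)=\O$ and noting every such $w$ is a legal vertex since changing the $V_\sigma$-part doesn't change $L(w)\bmod I$ when $I_\sigma\subset I$), verify $I_{\{v,w\}}\supseteq(L(v),L(w))\ni 1$, and then handle $I_\sigma\not\subset I$ by the more delicate unit-adjustment argument leaning on $L(v)\equiv 1\bmod I$.
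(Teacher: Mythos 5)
There is a genuine gap at the heart of both cases. Your claim that $(I_\sigma,c_0)=\O$ lets you ``pick a unit $c$ and an element $z\in V_\sigma$ with $L(z+cv_n)=1$'' is false: that would require $cc_0\equiv 1\bmod I_\sigma$ for some unit $c$, i.e.\ that the image of $\O^\times$ in $(\O/I_\sigma)^\times$ hits $c_0^{-1}$. There is no reason this should hold — for instance $\O=\Z$, $I_\sigma=5\Z$, $c_0=2$ already fails, and the lemma is stated for an \emph{arbitrary} Dedekind domain with no hypothesis on $\O^\times$. More generally, the unit coefficient on the completing vector gives you essentially no leverage: $(L(v),cc_0)=(L(v),c_0)$ for any unit $c$, so your proposed $w=v+cv_n$ in the $I_\sigma\not\subset I$ case cannot produce coprimality if $(L(v),c_0)\neq\O$. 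You correctly sense that Reiner's unimodular-complements theorem (``$SR_{2.5}$'') must enter, but you never actually set up the application, and the freedom you are reaching for is in the wrong coordinate.

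The paper's argument keeps the coefficient of the completing vector $z$ fixed at $1$ and instead uses Reiner's theorem to choose \emph{arbitrary} (not unit) coefficients $c_3,\dots,c_n$ on the $\sigma$-vectors other than $v$: since $n\geq 3$, $b_1=L(v)\neq 0$, and $(b_1,\dots,b_n)=\O$, Reiner gives $c_3,\dots,c_n$ with $(b_1,\xi)=\O$ where $\xi=b_2+c_3b_3+\cdots+c_nb_n$, and one sets $y=z+c_3v_3+\cdots+c_nv_n$. If $I_\sigma\subset I$ this adjustment leaves $L(y)\equiv L(z)\bmod I$ and we are done with $w=y$. If $I_\sigma\not\subset I$, the mod-$I$ class of $L(y)=\xi$ may be bad; the fix is not to perturb by $v_n$ but by $v$ itself — set $w=y-\xi v$. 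Then $L(w)\equiv\xi-\xi\cdot 1=0\bmod I$ because $L(v)\equiv 1\bmod I$, while $(L(v),L(w))=(L(v),L(y))=\O$ is unchanged. Both the Reiner step (to vary in $V_\sigma$, not via the unit) and the ``subtract $L(y)v$'' trick are missing from your proposal, and they are exactly what make the lemma go through.
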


The proof of Lemma \ref{lemma:complement2} requires the following result of Reiner, which
is a strengthening for Dedekind domains of Bass's stable range condition $\text{SR}_3$.

\begin{lemma}[Reiner]
\label{lemma:reiner}
Let $\O$ be a Dedekind domain and let $\{b_1,\ldots,b_n\}$ be
elements of $\O$ that generate the unit ideal.  Assume that $n \geq 3$ and $b_1 \neq 0$.
Then there exists $c_3,\ldots,c_n \in \O$ such that $b_1$ and
$b_2 + c_3 b_3 + \cdots + c_n b_n$ generate the unit ideal.
\end{lemma}
\begin{proof}
In \cite[bottom of p. 246]{ReinerUnimodular}, Reiner proved that there exists some
$d \in \O$ such that the elements $\{b_1,\ldots,b_{n-2},b_{n-1}+d b_n\}$ of $\O$
generate the unit ideal.  We remark that Reiner does not explictly require $b_1 \neq 0$,
but this is needed for his proof.  Repeatedly applying this, the lemma follows.
\end{proof}

\begin{proof}[Proof of Lemma \ref{lemma:complement2}]
Since $\sigma^{n-2}\in \PartialBases_n(I)$ is a partial $I$-basis, there exists some $z \in \O^n$ such that
$\sigma \cup \{z\}$ is an $I$-basis.  Let $b_1 = L(v)$ and $b_2 = L(z)$; by hypothesis $b_1\neq 0$. Write $\sigma^{n-2} = \{v,v_3,\ldots,v_n\}$ and let $b_i = L(v_i)$ for $3 \leq i \leq n$.
Since $\sigma \cup \{z\}$ is a basis for $\O^n$, the set $\{b_1,\ldots,b_n\} \subset \O$ generates
the unit ideal.  Since $n \geq 3$ and $b_1 \neq 0$ and $\{b_1,\ldots,b_n\}$ generates the unit ideal, Lemma \ref{lemma:reiner} implies that 
there exists $c_3,\ldots,c_n \in \O$ such that
the two elements $b_1$ and $\xi\coloneq b_2 + c_3 b_3+c_4 b_4+\cdots+c_n b_n$ generate the unit ideal.
Set $y= z + c_3 v_3 + \cdots +c_n v_n$. The ideal $I_{\{v,y\}}$ is generated by $L(v) = b_1$ and $L(y) = \xi$, so by construction $I_{\{v,y\}}=\O$. Since $\sigma \cup \{z\}$ is a basis for $\O^n$, so is $\sigma\cup \{y\}$.

If $I_\sigma \subset I$ then $L(v_3)\equiv\cdots\equiv L(v_n)\equiv 0\bmod{I}$, so $L(y)\equiv L(z)\bmod{I}$.  Since
$z\in \PartialBases_n(I)$, we know that $L(z)$ is congruent to $0$ or $1$ modulo $I$. Therefore when
$I_\sigma \subset I$, the basis $\sigma\cup\{y\}$ is an $I$-basis.  We conclude that $w \coloneq y\in \Link_{\PB_n(I)}(\sigma)$ satisfies the conditions of the lemma.

When $I_\sigma\not\subset I$, the basis $\sigma \cup \{y\}$ may not be an $I$-basis since $L(y)=\xi$ may not be congruent to $0$
or $1$ modulo $I$.  In this case, by hypothesis $L(v)\equiv 1\bmod{I}$, so the vector $w\coloneq y-\xi v$ satisfies
\[L(w)=L(y)-\xi\cdot L(v)\equiv \xi-\xi=0\bmod{I}.\]
Therefore the basis $\sigma \cup \{w\}$ is an $I$-basis, so $w\in \Link_{\PartialBases_n(I)}(\sigma)$. Since
\[I_{\{v,w\}}=(L(v),L(w))=(L(v),L(y)-\xi\cdot L(v))=(L(v),L(y))=(b_1,\xi)=\O,\]
the vector $w$ satisfies the conditions of the lemma.
\end{proof}

\subsubsection{Characterizing good partial bases}
The second ingredient in the proof of Proposition~\ref{prop:BBprime} is the following characterization of good partial bases.

\begin{proposition}
\label{prop:charBprime}
Let $\sigma^k=\{v_1,\ldots,v_{k+1}\}$ be a $k$-simplex of $\PB_n(\O)$ \change with $k<n-1$.
\begin{enumerate}[label={\normalfont (\arabic*)},topsep=2pt,itemsep=2pt,parsep=2pt]
\item Assume that \change $L(v_i)\equiv 0\text{ or }1\bmod{I}$ for $i=1,\ldots,k+1$.  Then the following are equivalent.
\begin{enumerate}[label={\normalfont (\roman*)},topsep=1pt,noitemsep]
\item $\{v_1,\ldots,v_{k+1}\}$ is contained in a good basis.
\item $\{v_1,\ldots,v_{k+1}\}$ is contained in a good $I$-basis $\{v_1,\ldots,v_n\}$ s.t.\ $L(v_i)=1$ for $i\geq k+2$.
\item There exists a complement $W$ such that $\O^n=V_\sigma\oplus W$ and $L(W)=\O$.
\end{enumerate}
\item If $I_\sigma=(0)$ or $I_\sigma=\O$ or $k<n-2$, then $\sigma$ is contained in a good basis.
\end{enumerate}
\end{proposition}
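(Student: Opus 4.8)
The plan is to prove the two parts of Proposition~\ref{prop:charBprime} more or less in sequence, with part (1) being the substantial one and part (2) following by combining part (1) with the linear-algebra input from Lemma~\ref{lemma:summandsubspace} and Lemma~\ref{lemma:partialbasissub}. For part (1), the natural cycle of implications is (iii)$\Rightarrow$(ii)$\Rightarrow$(i)$\Rightarrow$(iii); the only slightly delicate arrow is (i)$\Rightarrow$(iii), since (ii)$\Rightarrow$(i) is immediate from the definitions and (iii)$\Rightarrow$(ii) is a matter of choosing a basis of $W$ carefully.

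For (iii)$\Rightarrow$(ii): given a complement $W$ with $\O^n=V_\sigma\oplus W$ and $L(W)=\O$, note that $W$ is a rank $n-k-1$ projective summand of $\O^n$ with $[W]=[\O^n]-[V_\sigma]=0$, hence $W\cong\O^{n-k-1}$; so pick a basis $\{v_{k+2},\ldots,v_n\}$ of $W$. Since $L(W)=\O$, after an elementary change of this basis (replacing $v_{k+2}$ by a suitable $\O$-linear combination of $v_{k+2},\ldots,v_n$, using Reiner's theorem / $\mathrm{SR}_{2.5}$ exactly as in the proof of Lemma~\ref{lemma:complement2}) we may arrange $L(v_{k+2})=1$; then translating the remaining $v_j$ by multiples of $v_{k+2}$ arranges $L(v_j)=1$ for all $j\geq k+2$. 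The result is a good $I$-basis $\{v_1,\ldots,v_n\}$ containing $\sigma$ with $L(v_i)=1$ for $i\geq k+2$, which is exactly (ii). For (ii)$\Rightarrow$(i), a good $I$-basis is in particular a good basis. For (i)$\Rightarrow$(iii): suppose $\{v_1,\ldots,v_{k+1}\}\subset\{v_1,\ldots,v_n\}$ with $L(v_j)=1$ for some $j$. If $j\geq k+2$, take $W=\spn(v_{k+2},\ldots,v_n)$; then $1\in L(W)$ so $L(W)=\O$. If instead $j\leq k+1$, so $L(v_j)=1$ with $v_j\in V_\sigma$, then for any complement $W_0$ of $V_\sigma$ the map $w\mapsto w+L(w)\cdot(\text{something})$ — more simply, replace a basis vector $v_{k+2}$ of $W_0$ by $v_{k+2}-L(v_{k+2})v_j$, giving a new complement $W$ (still a complement since we subtracted an element of $V_\sigma$) on which $L$ vanishes on that vector; repeating, we can push $W_0$ to a complement $W$ with $L(W)=0$, and... wait — that gives $L(W)=0$, not $\O$. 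So instead: if $L(v_j)=1$ with $j\leq k+1$, observe we are not required to keep $V_\sigma$ fixed inside any particular basis; rather, use that $V_\sigma$ already contains a vector with $L$-value $1$, and choose $W=\spn(v_{k+2},\ldots,v_n)$ from the given good basis directly — since $L(v_j)=1$, modify this $W$ as follows: it is a complement of $V_\sigma$, and adding $v_j$ (which lies in $V_\sigma$) to any one of its basis vectors keeps it a complement while changing that vector's $L$-value to include a $1$; hence we may arrange $L(W)\ni 1$, i.e. $L(W)=\O$. This yields (iii).

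For part (2): we must show that when $I_\sigma=(0)$, or $I_\sigma=\O$, or $k<n-2$, the simplex $\sigma$ is contained in a good basis; by part (1)(iii) (applied with $I=\O$, where the congruence hypothesis is vacuous) it suffices to produce a complement $W$ of $V_\sigma$ with $L(W)=\O$. Write $\O^n=V_\sigma\oplus W_0$ for an arbitrary complement (Lemma~\ref{lemma:summandsubspace}), with $W_0\cong\O^{n-k-1}$. If $I_\sigma=L(V_\sigma)=\O$, then already some vector of $V_\sigma$ has a basis representation hitting $L$-value $1$... more precisely $1\in L(V_\sigma)$, so $L(v_i)=1$ for some $i$ after an elementary modification of the basis of $V_\sigma$ — but we are not allowed to change $V_\sigma$'s chosen basis vectors? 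Actually $\sigma$ \emph{is} the vertex set $\{v_1,\ldots,v_{k+1}\}$, so we cannot change them; instead note $1\in L(V_\sigma)$ means $1=\sum a_i L(v_i)$, so modify $W_0$ by adding $\sum a_i v_i\in V_\sigma$ to one basis vector of $W_0$, producing a complement $W$ with $L(W)=\O$. If $I_\sigma=(0)$, then $L|_{V_\sigma}=0$, so $L$ restricted to $W_0$ is surjective (since $L$ is surjective on $\O^n=V_\sigma\oplus W_0$), i.e. $L(W_0)=\O$, and $W=W_0$ works. Finally if $k<n-2$, so $\Rank(W_0)=n-k-1\geq 2$: here $L(W_0)$ is an ideal $J$ with $I_\sigma+J=\O$ (since $L(\O^n)=\O$); using that $W_0$ has rank $\geq 2$ and Reiner's theorem ($\mathrm{SR}_{2.5}$ for Dedekind domains, as in Lemma~\ref{lemma:complement2}), one can modify $W_0$ to a complement $W$ on which $L$ takes the value $1$ — concretely, pick $u\in W_0$ with $L(u)\in J$ generating $J$ together with nothing extra needed, then since $I_\sigma+J=\O$ write $1=a+b$ with $a\in I_\sigma=L(V_\sigma)$, $b\in J=L(W_0)$: choose $x\in V_\sigma$, $y\in W_0$ with $L(x)=a$, $L(y)=b$; then $y+x$ has $L$-value $1$, and since $\Rank W_0\geq 2$ we may extend $\{y\}$ to a basis of $W_0$ and then add $x\in V_\sigma$ to $y$, obtaining a complement $W$ of $V_\sigma$ containing a vector of $L$-value $1$, so $L(W)=\O$.

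The main obstacle I expect is getting the bookkeeping in part~(1), implication (i)$\Rightarrow$(iii), genuinely correct: the subtlety is that $\sigma$'s vertices are fixed, the "good" vector witnessing $L(v_j)=1$ may lie inside $V_\sigma$ rather than in the complementary part, and one must massage an arbitrary complement into one on which $L$ is surjective without disturbing $\sigma$. The device that makes this work — and that recurs throughout — is that adding any element of $V_\sigma$ to a basis vector of a complement $W_0$ yields another complement of $V_\sigma$, which lets us transfer the $L$-value $1$ from $V_\sigma$ into $W$; combined with Reiner's $\mathrm{SR}_{2.5}$ theorem to handle ideals in the rank-$\geq 2$ situations, this should close all cases. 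The hypothesis $k<n-1$ in the statement is used precisely to guarantee that a complement $W$ of positive rank exists at all.
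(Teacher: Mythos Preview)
Your proof of part~(1) is essentially correct, though you reverse the paper's cycle of implications. One simplification: for (iii)$\Rightarrow$(ii) you invoke Reiner's theorem, but this is unnecessary and does not directly produce an element with $L$-value exactly~$1$. Since $L(W)=\O$, simply pick any $w\in W$ with $L(w)=1$; then $L|_W$ gives a splitting $W=\langle w\rangle\oplus\ker L|_W$, and $\ker L|_W$ is free since $[\ker L|_W]=[W]-[\O]=0$. Choosing a basis of $\ker L|_W$ and translating by multiples of $w$ gives the required basis. Your (i)$\Rightarrow$(iii) is correct in spirit, though the paper streamlines this as (i)$\Rightarrow$(ii) by setting $v_i=u_i+(1-L(u_i))v$ for $i\geq k+2$, which works uniformly regardless of whether the good vector $v$ lies in $V_\sigma$ or in the complement.

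There is, however, a genuine gap in part~(2), in the residual case $k<n-2$ with $(0)\neq I_\sigma\neq\O$. You choose $y\in W_0$ with $L(y)=b$ and then assert ``since $\Rank W_0\geq 2$ we may extend $\{y\}$ to a basis of $W_0$''. This requires $y$ to be unimodular in $W_0$, which you have not arranged and which can fail: if $L|_{W_0}\colon\O^2\twoheadrightarrow J$ has image a non-principal ideal, then $\ker L|_{W_0}$ is a rank-$1$ summand of class $-[J]\neq 0$, hence not free, so there is \emph{no} unimodular $y\in W_0$ with $L(y)=0$. Even for the particular $b$ with $1-b\in I_\sigma$ that you need, you give no argument that a unimodular preimage exists, and your vague reference to Reiner does not supply one. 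The paper closes this gap by applying Lemma~\ref{lemma:complement2}, which uses Reiner's theorem inside a full basis of $\O^n$ rather than of $W_0$: fix a basis $\{u_{k+2},\ldots,u_n\}$ of a complement, choose $u_n$ with $L(u_n)\neq 0$, form the $(n-2)$-simplex $\{v_1,\ldots,v_{k+1},u_{k+3},\ldots,u_n\}$, and apply Lemma~\ref{lemma:complement2} with $I=\O$ to obtain a genuine basis vector $w$ with $(L(w),L(u_n))=\O$. Then $W=\langle w,u_{k+3},\ldots,u_n\rangle$ is the desired good complement. The point is that Reiner's theorem is being used to modify the element $u_{k+2}$ of an \emph{existing} basis of $\O^n$, which automatically keeps the result unimodular.
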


Before proving Proposition~\ref{prop:charBprime}, we introduce some terminology.
We say that $\sigma$ is a \emph{good simplex} if the three conditions of Proposition~\ref{prop:charBprime}(1) are satisfied, and call the subspace $W$ of condition (iii) a \emph{good complement} for $\sigma$.
In this case we denote by $\PB_W(I)$ the complex of partial $I$-bases for $W$, whose simplices are subsets of $W$ that can be extended to an $I$-basis for $W$ (with respect to $L$). Since $[W]=[\O^n]-[V_\sigma]=0$, any good complement is a free $\O$-module, and $L|_W\colon W\onto \O$ is surjective by definition. Thus by Remark~\ref{remark:nonuniqueBnI}, there exists an isomorphism $\PB_W(I)\iso \PB_{\ell}(I)$, where $\ell=\Rank(W)$.

\begin{proof}[{Proof of Proposition~\ref{prop:charBprime}}]
To prove the first part of the proposition we will show that\linebreak (i) $\implies$ (ii) $\implies$ (iii) $\implies$ (i).

Assume (i), so $\sigma=\{v_1,\ldots,v_{k+1}\}$ is contained in a good basis 
\[\{v_1,\ldots,v_{k+1},u_{k+2},\ldots,u_n\}.\]
 
By definition, some vector $v$ lying in this good basis satisfies $L(v)=1$.
Define $v_i=u_i+(1-L(u_i))v$ for $k+2 \leq i \leq n$.  The set $\{v_1,\ldots,v_n\}$ is then a basis
satisfying $L(v_i)=1$ for $k+2 \leq i \leq n$. 
Since $k+1<n$, this is a good basis, and since $L(v_i)\equiv 0\text{ or }1\bmod{I}$ for $1 \leq i \leq k+1$ by assumption, it is also an $I$-basis. 
Therefore (i) $\implies$ (ii).  

Assume (ii), and define $W=\langle v_{k+2},\ldots,v_n\rangle$.  We certainly have $\O^n=V_\sigma\oplus W$, and since $L(v_n)=1$ we have $L(W)=\O$. 
Therefore (ii) $\implies$ (iii).

Finally, assume (iii), and choose any $w\in W$ with $L(w)=1$.  We then have a splitting $W=\langle w\rangle\oplus \ker L|_W$. The submodule $\ker L|_W$ is free since $[\ker L|_W]=[W]-[\O]=0$, so we can choose a basis $\{u_{k+3},\ldots,u_n\}$ for $\ker L|_W$. Then 
\[\{v_1,\ldots,v_{k+1},w,u_{k+3},\ldots,u_n\}\] 
is a good basis containing $\sigma$. Therefore (iii) $\implies$ (i).

We now prove the second part of the proposition. By Lemma~\ref{lemma:partialbasissub}(a), $V_\sigma$ has a complement $U \subset \O^n$ such that $\O^n = V_\sigma \oplus U$.
In particular $\O=L(\O^n) = L(V_\sigma)+L(U)=I_\sigma+L(U)$.  If $I_\sigma=(0)$, this implies that $L(U) = \O$, so $U$ is a good complement
for $\sigma$.
Otherwise, choose a basis $\{u_{k+2},\ldots,u_n\}$ for $U$.  If $I_\sigma=\O$, there exists $v\in V_\sigma$ with $L(v)=1$. Then $W\coloneq\langle u_{k+2}+(1-L(u_{k+2}))v,u_{k+3},\ldots,u_n\rangle$ is a good complement for $\sigma$.

It remains to handle the case when $(0)\neq I_\sigma\neq \O$ and $k<n-2$, so $\Rank(U)\geq 2$.
Since $I_\sigma\neq \O$ we cannot have $L(u_i)=0$ for all $k+2\leq i\leq n$, so assume without loss of generality that $L(u_n) \neq 0$. Consider the $(n-2)$-simplex
\[\sigma^{n-2}\coloneq\{v_1,\ldots,v_{k+1},u_{k+3},\ldots,u_n\}\in \PB_n(\O).\]
Applying Lemma~\ref{lemma:complement2} with $I=\O$ gives a vertex $w$ in $\Link_{\PB_n(\O)}(\sigma)$ such that  $I_{\{w,u_n\}}=\O$ and $\{v_1,\ldots,v_{k+1},w,u_{k+3},\ldots,u_n\}$
is a basis for $\O^n$. It follows that
$W\coloneq \Span{w,u_{k+3},\ldots,u_n}$ is a good complement for $\sigma$.
\end{proof}

\subsubsection{Completing the proof of Step 2}
The pieces are now in place for the proof of Proposition~\ref{prop:BBprime}.

\begin{proof}[Proof of Proposition~\ref{prop:BBprime}]
Given a $k$-simplex $\sigma^k\in \PB_n(I)$ with $k\leq n-3$, 
Proposition~\ref{prop:charBprime}(2) implies that $\sigma$ is contained in $\PartialBasesGood_n(I)$. In other words, $\PBg_n(I)$ contains the $(n-3)$-skeleton of $\PartialBases_n(I)$. Thus $(\PartialBases_n(I),\PartialBasesGood_n(I))$ is $(n-3)$-connected, and every  
$(n-2)$-simplex $\sigma^{n-2}\in \PartialBases_n(I)$ has $\partial \sigma^{n-2}\subset \PartialBasesGood_n(I)$.
To prove that $(\PartialBases_n(I),\PartialBasesGood_n(I))$ is $(n-2)$-connected, it suffices to show that 
every $(n-2)$-simplex of $\PartialBases_n(I)$ that does not lie in $\PartialBasesGood_n(I)$ is homotopic
relative to its boundary into $\PartialBasesGood_n(I)$.

Let $\sigma^{n-2}$ be an $(n-2)$-simplex of $\PartialBases_n(I)$ that does not lie in $\PartialBasesGood_n(I)$. 
By Proposition~\ref{prop:charBprime}(2), we must have $(0)\subsetneq I_\sigma \subsetneq \O$.
Choose a vertex $v$ of $\sigma$ with $L(v)\neq 0$; if $I_\sigma\not\subset I$ then choose $v$ such that $L(v)\not\equiv 0\bmod{I}$.  Using Lemma~\ref{lemma:complement2}, choose a vertex $w$ of $\PartialBases_n(I)$
such that $\sigma^{n-2} \cup \{w\} \in \PartialBases_n(I)$ and $I_{\{v,w\}} = \O$.
Write $\sigma^{n-2}=\sigma_0 \cup \{v\}$, let $V=V_{\sigma_0}$ and let $W=\Span{v,w}$.  

The 1-dimensional complex $\PB_W(I)$, whose edges consist of $I$-bases for $W$, is contained in $\Link_{\PartialBases_n(I)}(\sigma_0)$. 
As we discussed following Proposition~\ref{prop:charBprime}, identifying $W$ with $\O^2$ induces an isomorphism 
$\PB_W(I) \iso \PB_2(I)$. We have assumed that $\PB_2(I)$ is connected, so  $\PB_W(I)$ is connected.

Fix a vertex $y$ of $\PB_W(I)$ such that $L(y) = 1$, and let $P$ be an embedded path in $\PB_W(I)$ from
$v$ to $y$.  Let $v = p_1,\ldots,p_m=y$ be an enumeration of the vertices of $P$.  For
$1 \leq i < m$, the set $\{p_i,p_{i+1}\}$ is a basis for $W$, so $(L(p_i),L(p_{i+1})) = L(W) = \O$.

Regarding $P$ as a triangulated closed interval that is mapped into $\Link_{\PartialBases_n(I)}(\sigma_0)$, 
we obtain an embedding of the join $\sigma_0 \ast P$ into $\PartialBases_n(I)$.  Its boundary is
\[\partial(\sigma_0\ast P)=\left(\partial \sigma_0\ast P\right) \cup \left(\sigma_0\ast \partial P\right).\]
Since $\partial P=\{v\}\sqcup \{y\}$, the second term $\sigma_0\ast\partial P$ consists of $\sigma_0 \ast \{v\}=\sigma$ 
and $\sigma_0 \ast \{y\}$.  Since $L(y)=1$, $\sigma_0\ast\{y\}$ is a partial good $I$-basis by Proposition~\ref{prop:charBprime}(2), so $\sigma_0\ast \{y\}\in\PartialBasesGood_n(I)$.

The first term $\partial\sigma_0 \ast P$ can be written as 
\[\partial \sigma_0\ast P = \bigsqcup_i \partial \sigma_0 \ast \{p_i,p_{i+1}\}.\]
Since $I_{\{p_i,p_{i+1}\}}=\O$, every $(n-2)$-simplex $\tau$ occurring in the join $\partial \sigma_0 \ast \{p_i,p_{i+1}\}$ 
has $I_\tau=\O$, and thus lies in $\PartialBasesGood_n(I)$ by Proposition~\ref{prop:charBprime}(2).  We conclude that $\sigma$ 
is the unique face of $\sigma_0\ast P$ that is not contained in $\PartialBasesGood_n(I)$.  The subdivided $(n-1)$-simplex 
$\sigma_0 \ast P$ thus provides the desired homotopy of $\sigma$ into $\PartialBasesGood_n(I)$.
\end{proof}

\subsection{Step 3: \texorpdfstring{$(\PBg_n(I),\PB_n(0))$}{(Bn-good(I),Bn(0))} is \texorpdfstring{$(n-2)$-connected}{(n-2)-connected}}
\label{section:step3}

Since every vertex $v\in \PB_n(0)$ has $L(v)=0$ or $L(v)=1$, every simplex $\sigma\in \PB_n(0)$ has $I_\sigma=(0)$ or $I_\sigma=\O$. By Proposition~\ref{prop:charBprime}(2), every such simplex is good, so $\PB_n(0)$ is contained in $\PBg_n(I)$.
The main result of this section is the following proposition. 

\begin{proposition}
\label{prop:BprimeB0}
Let $\O$ be a Dedekind domain, and fix $n\geq 3$. Assume that for all $1\leq m<n$ and all ideals $I\subset \O$, the complex $\PB_m(I)$ is CM of dimension $m-1$. Then for any ideal $I\subset \O$, the pair $(\PBg_n(I),\PB_n(0))$ is $(n-2)$-connected.
\end{proposition}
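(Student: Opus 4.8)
The plan is a bad-simplex argument. First I would record the fact (used implicitly in the paragraph preceding Proposition~\ref{prop:charBprime}) that $\PB_n(0)$ is precisely the full subcomplex of $\PB_n(\O)$ on the unimodular vectors $v$ with $L(v)\in\{0,1\}$. One containment is immediate. Conversely, let $\sigma=\{v_1,\dots,v_k\}$ be a partial basis of $\O^n$ with every $L(v_i)\in\{0,1\}$. If some $L(v_i)=1$, subtract the appropriate multiple of $v_i$ from each vector of a completing basis $\{u_1,\dots,u_{n-k}\}$ to make all the $L(u_\ell)$ equal to $0$; the result is a $(0)$-basis containing $\sigma$. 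If instead every $L(v_i)=0$, then $\sigma\subset\ker L$, and since $\ker L\iso\O^{n-1}$ is a free summand, Lemma~\ref{lemma:partialbasissub} makes $\sigma$ a partial basis of $\ker L$; completing it there and adjoining one vector with $L$-value $1$ gives a $(0)$-basis. Now call a vertex $v$ of $\PBg_n(I)$ \emph{bad} if $L(v)\notin\{0,1\}$. By what was just proved, the simplices of $\PBg_n(I)$ with no bad vertex are exactly the simplices of $\PB_n(0)$.

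With this, the statement becomes amenable to the standard bad-simplex argument: to show $(\PBg_n(I),\PB_n(0))$ is $(n-2)$-connected it suffices to show that for every simplex $\sigma$ of $\PBg_n(I)$ with $j$ vertices, all of them bad, the full subcomplex $G_\sigma$ of $\Link_{\PBg_n(I)}(\sigma)$ spanned by the non-bad vertices is $(n-j-2)$-connected. Indeed $G_\sigma=\Link_{\PBg_n(I)}(\sigma)\cap\PB_n(0)$, because a simplex of the link all of whose vertices are non-bad is a partial basis with $L$-values in $\{0,1\}$ and so lies in $\PB_n(0)$ by the previous paragraph; and since $j=\dim\sigma+1$, the bound $n-j-2=(n-2)-\dim\sigma-1$ is exactly the one the bad-simplex argument requires.

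It remains to estimate $G_\sigma$. Since $\sigma\in\PBg_n(I)$, Proposition~\ref{prop:charBprime}(1) provides a good complement $W$, so $\O^n=V_\sigma\oplus W$ with $\bar L:=L|_W\colon W\onto\O$ surjective and $W$ free of rank $n-j$; form the complexes $\PB_\bullet(\cdot)$ for $W$ using $\bar L$ as in Remark~\ref{remark:nonuniqueBnI}. Put $J:=I_\sigma=L(V_\sigma)$ and let $q\colon\O^n\onto W$ be the projection along $V_\sigma$. For any non-bad vertex $w$ of $\Link_{\PBg_n(I)}(\sigma)$ the vector $q(w)$ is unimodular, and $\bar L(q(w))\in L(w)+L(V_\sigma)=L(w)+J$, so $\bar L(q(w))\equiv 0$ or $1\bmod{J}$. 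The plan is then to show that $q$ maps $G_\sigma$ into $\PB_{n-j}(J)$ and, crucially, that it \emph{fibers} $G_\sigma$ over $\PB_{n-j}(J)$ in the sense of Definition~\ref{def:fibersCMsplit}. Given this, the inductive hypothesis says $\PB_{n-j}(J)$ is CM of dimension $n-j-1$, hence $(n-j-2)$-connected, and a fibered map transfers this connectivity to $G_\sigma$, which is exactly the bound we need. (For $j=n-1$ this is just the assertion that $G_\sigma$ is nonempty, which is clear: a good $I$-basis completing $\sigma$ contains a vector of $L$-value $1$, and that vector cannot lie in the all-bad simplex $\sigma$.)

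The heart of the argument — and the step I expect to be the main obstacle — is verifying that $q$ fibers $G_\sigma$ over $\PB_{n-j}(J)$. Two difficulties arise. First, one must build completing bases \emph{inside} $W$: projecting along $V_\sigma$ a good $I$-basis that completes $\sigma\cup\{w\}$ controls the resulting $\bar L$-values only modulo $I$, not modulo $J$, so one has to manufacture genuine $J$-bases of $W$ realizing prescribed fibers, via elementary modifications together with the Reiner-type unimodular-row estimate already used in Lemma~\ref{lemma:complement2}. Second, the case $J=I_\sigma=\O$ — which can occur even though every $L(v_i)\ne 0$ — behaves differently: there $q$ does not fiber $G_\sigma$ over $\PB_{n-j}(\O)$ in the naive way but only in the weaker ``fibered relative to a core'' sense of Definition~\ref{def:fibersCMsplit}, and transferring connectivity across such a map genuinely uses that $\PB_{n-j}(\O)$ is Cohen--Macaulay, not merely highly connected. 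This is precisely the point where the inductive hypothesis — and hence Theorem~\ref{maintheorem:partialbasescm} itself — must be phrased as a Cohen--Macaulay statement rather than a connectivity statement. The remaining technical point, that a map of a disk into $\PBg_n(I)$ with boundary in $\PB_n(0)$ may be taken simplicial and nondegenerate on the simplices it sends to bad simplices, is routine.
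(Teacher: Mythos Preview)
Your approach is essentially identical to the paper's: the bad-simplex argument is exactly Lemma~\ref{lemma:pushing}, your $G_\sigma$ is the paper's $\Link_0(\sigma)$, and the split into the cases $J\neq\O$ (fully fibered) and $J=\O$ (fibered with core) is precisely the paper's Claims~1 and~2, verified via Lemma~\ref{lemma:fibersonlyd} and concluded via Lemma~\ref{lemma:fibersCMsplit}. The first difficulty you anticipate is milder than you expect: since non-bad vertices have $L$-value exactly $0$ or $1$ (not merely $\equiv 0,1\bmod I$), lifting a $J$-basis $\{w_i\}$ of $W$ back to $G_\sigma$ needs only to add to each $w_i$ an element of $V_\sigma$ correcting its $L$-value---no Reiner-type argument is required.
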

\noindent Before proving Proposition~\ref{prop:BprimeB0} we establish three results that will be necessary in the proof.

\subsubsection{Pushing into a subcomplex} 
The first ingredient is the following lemma on the connectivity of a pair.

\begin{lemma}
\label{lemma:pushing}
Let $A$ be a full subcomplex of a simplicial complex $X$, and fix $m \geq 0$. Assume
that for every $k$-simplex $\sigma^k \in X$ which is disjoint from $A$, the intersection
$\Link_X(\sigma^k) \cap A$ is $(m-k-1)$-connected.  Then the pair $(X,A)$ is $m$-connected.
\end{lemma}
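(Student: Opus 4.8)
The statement to prove is Lemma~\ref{lemma:pushing}: if $A$ is a full subcomplex of $X$, and for every $k$-simplex $\sigma^k$ of $X$ disjoint from $A$ the intersection $\Link_X(\sigma^k) \cap A$ is $(m-k-1)$-connected, then $(X,A)$ is $m$-connected. The natural approach is a standard simplicial-approximation / induction-on-skeleta argument: take a map $f\colon (\Ball{i}, \Sphere{i-1}) \to (X, A)$ with $i \le m$, make it simplicial with respect to some triangulation of $\Ball{i}$, and then push the image off of $X \setminus A$ and into $A$, one simplex at a time, in order of decreasing dimension of the ``bad'' simplices of $X$ that appear in the image.

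\textbf{Key steps.} First I would reduce to the combinatorial setting: by simplicial approximation, any element of $\pi_i(X,A)$ is represented by a simplicial map $f$ from a finite simplicial pair $(Y, Z)$ with $Z$ a full subcomplex of $Y$, $Y$ a triangulated $i$-ball (or: $Y$ obtained by coning) and $Z$ its boundary, such that $f(Z)\subset A$. The goal is to homotope $f$ rel $Z$ until $f(Y)\subset A$. Second, I would set up the right notion of ``badness'': call a simplex $\tau$ of $Y$ \emph{bad} if $f(\tau)$ is a simplex of $X$ disjoint from $A$ (equivalently not in $A$, using that $A$ is full). If there are no bad simplices then, again because $A$ is full, $f(Y)\subseteq A$ and we are done. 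Otherwise pick a bad simplex $\tau$ of \emph{maximal} dimension, say $\dim f(\tau) = k$ (note $\tau$ need not have dimension $k$; but since $f$ is simplicial and $f(\tau)$ is a $k$-simplex, we can think about the full preimage). Third — the heart — I would perform the local pushing move: because $\sigma^k := f(\tau)$ is disjoint from $A$ and $\dim\sigma^k = k$, the hypothesis says $\Link_X(\sigma^k)\cap A$ is $(m-k-1)$-connected. On the source side, the relevant link $\Link_Y(\tau)$ (or rather the union of links of the maximal-bad simplices mapping to $\sigma^k$) maps into $\Link_X(\sigma^k)$, and by maximality of $k$ its image already lies in $\Link_X(\sigma^k)\cap A$ together possibly with lower-dimensional bad simplices — one arranges the bookkeeping so that this link-sphere in the source has dimension at most $m-k-1$, hence the map of it into $\Link_X(\sigma^k)\cap A$ extends over a disk. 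Using that extension, one restructures the triangulation of $Y$ near $\tau$ (a ``star replacement'': delete the open star of $\tau$ and fill the resulting hole using the cone on the extension) and redefines $f$ there so that $\tau$ is no longer in the picture, at the cost only of introducing simplices mapping into $A$ or bad simplices of dimension (in the image) $< k$. This strictly decreases the relevant complexity measure — e.g. the number of bad simplices of maximal image-dimension, with image-dimension itself as the primary invariant — so the process terminates with $f(Y)\subseteq A$. Finally I'd note the connectivity bookkeeping: one must check $i\le m$ forces the source links appearing at stage $k$ to have dimension $\le m-k-1$, which is exactly what matches the hypothesis.

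\textbf{Main obstacle.} The delicate point is the combinatorial surgery in the third step: making precise which simplices of the source triangulation to replace, verifying that the new map still sends the boundary $Z$ into $A$ (untouched, since bad simplices are disjoint from $A$), and — crucially — confirming that the complexity measure genuinely decreases rather than merely shuffling badness around. Getting the connectivity numerology to line up (that the link-sphere one must fill has dimension at most $m-k-1$ whenever $i\le m$) is routine but must be done carefully. I expect the cleanest write-up either cites a packaged version of this ``link connectivity implies relative connectivity'' principle (it is close to standard results of Quillen and van der Kallen on this kind of argument) or carries out the skeleton-by-skeleton induction explicitly, handling $\pi_0$ as the base case and using the hypothesis with $k$ ranging appropriately.
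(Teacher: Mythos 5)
Your proposal is correct in outline, and it is the classical ``bad simplex'' surgery argument (in the style of van der Kallen or Hatcher--Wahl): approximate a map $(\Ball{i},\Sphere{i-1})\to (X,A)$ simplicially, locate a bad simplex $\tau$ of maximal dimension, use the hypothesis to extend the restriction of the map on $\Link_Y(\tau)$ to a disk inside $\Link_X(f(\tau))\cap A$, do a star replacement, and iterate. One small correction to the bookkeeping: the right primary invariant for the induction is $\dim_Y\tau$, not $\dim f(\tau)$. With the choice of $\tau$ maximal in $Y$-dimension among bad simplices, fullness of $A$ forces $f(\Link_Y(\tau))\subset \Link_X(f(\tau))\cap A$, and the dimension estimate $\dim\Link_Y(\tau)=i-\dim_Y\tau-1\le m-\dim_Y\tau-1\le m-\dim f(\tau)-1$ then gives exactly the connectivity you need (using $\dim f(\tau)\le\dim_Y\tau$). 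The surgery removes $\tau$ and all simplices of its open star, and every newly-created simplex meets $A$, so the count of top-dimensional bad simplices strictly decreases. If you index by image-dimension as you suggest, faces of $\tau$ with the same image can linger and the termination argument becomes murkier.

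The paper proves the same lemma by a genuinely different (though conceptually cognate) route: it invokes Bestvina's PL Morse theory rather than doing the simplicial-approximation-and-surgery by hand. Concretely, it works directly on the poset $\Poset(X)$ of simplices of $X$, defines a PL Morse function $f(\sigma)=\big(\lvert\sigma\cap B\rvert,\,-\lvert\sigma\cap A^{(0)}\rvert\big)$ with $B=X^{(0)}\setminus A^{(0)}$, notes that the sublevel set $f^{-1}(\{0\}\times -\N)$ is exactly $\Poset(A)$, and then verifies the one hypothesis the Morse machine needs: that the downward link of every simplex $\sigma$ with $\sigma\cap B\neq\emptyset$ is $(m-1)$-connected. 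This downward link decomposes as a join $T\ast R$; if $\sigma$ meets $A$ then $T$ is a cone (hence contractible), and if $\sigma$ is disjoint from $A$ then $T\simeq S^{k-1}$ while $R\iso\Poset(\Link_X(\sigma)\cap A)$, so the hypothesis of the lemma applies and the join is $(m-1)$-connected. What the Morse-theoretic formulation buys is that the geometric surgery you (rightly) flag as the delicate point is packaged once and for all inside Bestvina's theorem, leaving only a purely combinatorial downward-link calculation; what your approach buys is that it is self-contained and makes the geometric mechanism of ``pushing into $A$'' explicit.
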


The proof of Lemma~\ref{lemma:pushing} will use the language of posets.
Recall that a poset $P$ is said to be $d$-connected if its geometric realization $\abs{P}$ is $d$-connected. Similarly, the homology $\HH_{\ast}(P;\Z)$ is defined to be $\HH_{\ast}(\abs{P};\Z)$. Of course, a simplicial complex $X$ can be regarded as a poset $\Poset(X)$ whose elements are the simplices of $X$ under inclusion. The canonical homeomorphism $X\iso \abs{\Poset(X)}$ shows that $\Poset(X)$ is $d$-connected if and only if $X$ is, and there is a canonical isomorphism $\HH_{\ast}(X;\Z)\iso \HH_{\ast}(\Poset(X);\Z)$.

\begin{proof}[{Proof of Lemma~\ref{lemma:pushing}}]
We can replace $X$ by its $(m+1)$-skeleton without affecting the truth of the conclusion, 
so we can assume that $X$ is finite-dimensional.  Let $A$ be the full complex on the set of vertices $A^{(0)} \subset X^{(0)}$, so
$\Poset(A)=\Set{$\tau\in \Poset(X)$}{$\tau\subset A^{(0)}$}$.  Define $B\coloneq X^{(0)} \setminus A^{(0)}$.  
A simplex $\sigma\in \Poset(X)$ is 
disjoint from $A$ if $\sigma\cap A^{(0)}=\emptyset$, i.e.\ if $\sigma\subset B$; in 
this case, $\Link_X(\sigma)\cap A$ is $\Set{$\eta\in A$}{$\sigma\cup \eta\in \Poset(X)$}$.

We will use PL Morse theory as described in \cite{BestvinaPL} to study 
$(X,A) \cong (\abs{\Poset(A)},\abs{\Poset(X)})$.  In this context, a PL Morse
function is a function $\abs{\Poset(X)} \rightarrow \R$ with the following three
properties:
\begin{itemize}[nosep]
\item the restriction to each simplex is affine, and
\item the image of the vertex set is discrete, and
\item there are no horizontal edges, i.e.\ edges on which the function is constant.
\end{itemize}
Since the restriction to each simplex is affine, it is enough to define
the function on the $0$-skeleton $\Poset(X)$ of $\abs{\Poset(X)}$.  In fact, what we will
define is a map from $\Poset(X)$ to a totally ordered set whose image is finite;
the required map $\Poset(X) \rightarrow \R$ can then be constructed by embedding
the image in $\R$ in an order-preserving way.

Endow $\N \times -\N$ with the lexicographic order.  Our PL Morse function will
then be obtained via the above procedure from the map 
$f\colon \Poset(X)\to \N\times -\N$ defined by 
\[f(\sigma)=\big(\,\lvert\sigma\cap B\rvert,-\lvert\sigma\cap A^{(0)}\rvert\,\big).\]
Since $X$ is finite-dimensional, the image of $f$ is finite.  As for horizontal edges,
the edges of $\abs{\Poset(X)}$ are chains $\sigma \subsetneq \sigma'$ of simplices
$\sigma, \sigma' \in \Poset(X)$.  For these, we must have
$\abs{\sigma\cap B}<\abs{\sigma'\cap B}$ or 
$\abs{\sigma\cap A^{(0)}}<\abs{\sigma'\cap A^{(0)}}$, so $f(\sigma)\neq f(\sigma')$.  The
lack of horizontal edges follows. 

Note that $f^{-1}(\{0\}\times -\N)=\Set{$\sigma\in X$}{$\abs{\sigma \cap B}=0$}=\Poset(A)$. 
For any $\sigma\in \Poset(X)$, define the \emph{downward link} to be
\[\Link_\downarrow(\sigma)\coloneq \Set{$\tau\in \Poset(X)$}{$f(\tau)<f(\sigma)$ and either $\tau\subsetneq \sigma$ or $\tau\supsetneq \sigma$}.\]
From \cite[Proposition 2.7]{BestvinaPL} (which we regard as the
``fundamental theorem of PL Morse theory''), it follows that
if $\Link_\downarrow(\sigma)$ 
is $(m-1)$-connected for all $\sigma\in f^{-1}(\N_{>0}\times -\N)$, then $(\Poset(X),\Poset(A))$ 
is $m$-connected (here again we emphasize that the simplices of $X$ are the vertices
of $\abs{\Poset(X)}$).  We wish to verify this condition, so consider $\sigma \in f^{-1}(\N_{>0}\times -\N)$.

We can uniquely write $\sigma$ as the disjoint union of $\sigma_B\coloneq \sigma\cap B$ 
and $\sigma_A\coloneq \sigma\cap A^{(0)}$, where $\abs{\sigma_B}>0$ by assumption.    Define subposets of $\Poset(X)$ by
\begin{align*}
T&\coloneq \Set{$\tau\in \Poset(X)$}{$\tau\subsetneq \sigma$, $\abs{\tau\cap B}<\abs{\sigma\cap B}$},\\
R&\coloneq \Set{$\rho\in \Poset(X)$}{$\rho\supsetneq \sigma$, $\abs{\rho\cap B}=\abs{\sigma\cap B}$, $\lvert\rho\cap A^{(0)}\rvert>\lvert\sigma\cap A^{(0)}\rvert$}.
\end{align*} 
The downward link $\Link_\downarrow(\sigma)$ is the sub-poset $T\cup R$ of $\Poset(X)$; since 
every $\tau\in T$ and every $\rho\in R$ satisfy $\tau\subsetneq \rho$, in fact the downward link is 
the join $T\ast R$. It is well-known that the join of an $a$-connected poset with a $b$-connected poset is $(a+b+2)$-connected (see e.g.\ \cite[Example 8.1]{QuillenPoset}).

Now, either $\sigma_A \neq \emptyset$ or $\sigma_A=\emptyset$.  In the first case, the poset $T$
is contractible via the homotopy that takes $\tau$ to $\tau \cup \sigma_A$ and then to $\sigma_A$.
We conclude that in this case $\Link_\downarrow(\sigma)=T\ast R$ is contractible, and in particular
is $(m-1)$-connected.

So suppose that $\sigma_A=\emptyset$.  In other words, $\sigma$ is disjoint from $A$. 
In this case, $T$ is the poset of proper nonempty subsets of the $(k+1)$-element set $\sigma^k$, so $T\iso \Poset(\partial \Delta^k)\simeq S^{k-1}$; in particular, $T$ is $(k-2)$-connected.  The poset $R$ consists of $\rho\in \Poset(X)$ that can be written as 
$\rho=\sigma\cup \eta$ for some nonempty $\eta\in \Poset(A)$.  The assignment 
$\sigma\cup \eta\mapsto \eta$ gives an isomorphism of $R$ with $\Poset(\Link_X(\sigma)\cap A)$. Our assumption guarantees that $R$ is $(m-k-1)$-connected, so $\Link_\downarrow(\sigma)=T\ast R$ 
is $(m-1)$-connected, as desired.
\end{proof}

\subsubsection{Topology of nicely fibered complexes}
The second ingredient we will need concerns the topology of simplicial complexes that are ``fibered'' over a CM complex in the following sense.

\begin{definition}
\label{def:fibersCMsplit}
Let $F\colon X\to Y$ be a simplicial map between simplicial complexes.  Let $C\subset X^{(0)}$ be a subset of the vertices of $X$ with $F(C)=Y^{(0)}$.

We say that $X$ is \emph{fibered over $Y$ by $F$ with core $C$} if the following condition holds:
a subset $U=\{x_0,\ldots,x_k\}\subset X^{(0)}$ forms a $k$-simplex of $X$ if and only if 
\begin{enumerate}[topsep=2pt,itemsep=1pt,parsep=1pt]
\item[(1)] $F(U)=\{F(x_0),\ldots,F(x_k)\}$ is a $k$-simplex of $Y$, and
\item[(2)] if $F(U)$ is a maximal simplex of $Y$, then $U\cap C\neq\emptyset$.
\end{enumerate}
If $C=X^{(0)}$, we say $X$ is \emph{fully fibered over $Y$ by $F$}; in this case condition (2) is vacuous.
\end{definition}

The main result concerning fibered complexes is as follows.

\begin{lemma}
\label{lemma:fibersCMsplit}
Assume that $X$ is fibered over $Y$ by $F\colon X\to Y$ with core $C\subset X^{(0)}$. If $Y$ is CM of dimension $d$ then $X$ is CM of dimension $d$.
\end{lemma}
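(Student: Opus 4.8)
The plan is to verify the two defining properties of a Cohen--Macaulay complex for $X$: that $X$ is $d$-spherical, and that the link of every $k$-simplex is $(d-k-1)$-spherical. Both will be deduced from the corresponding property of $Y$ together with the fibering structure. The basic philosophy is that the map $F\colon X\to Y$ behaves, after the right analysis, like a ``fibration with highly-connected fibers''; the core $C$ is exactly the device that repairs the fibers over maximal simplices of $Y$, which would otherwise be disconnected (they would be discrete sets of vertices, not even nonempty after removing non-core vertices).

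First I would handle dimension and sphericity of $X$ itself. Condition (1) of Definition~\ref{def:fibersCMsplit} forces $\dim X \leq \dim Y = d$; condition (2) together with $F(C)=Y^{(0)}$ and $Y$ being pure of dimension $d$ (which follows from $Y$ being CM of dimension $d$, since every simplex of a CM complex lies in a top simplex) shows that every simplex of $X$ is contained in a $d$-simplex, so $X$ is pure of dimension $d$. For the connectivity $\mathrm{Conn}(X)\geq d-1$, the natural approach is a nerve- or Morse-theoretic argument over the poset $\Poset(Y)$, or an induction using the structure of the fibers: for a simplex $\tau\in Y$, the preimage $F^{-1}(\tau)$ (as a subcomplex, or more precisely the set of simplices mapping onto $\tau$) is, by conditions (1)--(2), either a full join of vertex-fibers (when $\tau$ is non-maximal) or such a join with the ``all non-core'' simplex deleted (when $\tau$ is maximal). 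In both cases this is highly connected — a join of nonempty discrete sets is $(|\tau|-1)$-connected, and deleting one top face of a simplex-boundary-like object only lowers connectivity in a controlled way. One then feeds this into a standard spectral-sequence or Quillen-poset-fiber argument (Quillen's Theorem A for posets, or \cite[Proposition~8.6]{QuillenPoset}-style reasoning) to conclude that $X$ has the connectivity of $Y$, namely $(d-1)$-connected.

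Next I would prove the link condition. Fix a $k$-simplex $\sigma=\{x_0,\ldots,x_k\}\in X$ and consider $\Link_X(\sigma)$. The key point is that $\Link_X(\sigma)$ is itself fibered over a CM complex of the right dimension. Concretely, $F(\sigma)$ is a simplex $\bar\sigma$ of $Y$ of some dimension $j\leq k$ (it may collapse if $F$ is not injective on $\sigma$), and one checks that $F$ restricts to a simplicial map $\Link_X(\sigma)\to \Link_Y(\bar\sigma)$. Now $\Link_Y(\bar\sigma)$ is CM of dimension $d-j-1$ by the ``links are CM'' fact quoted after the definition of CM complexes (\cite[Proposition~8.6]{QuillenPoset}). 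I would then identify an appropriate core $C'\subset \Link_X(\sigma)^{(0)}$ — roughly, the vertices of $\Link_X(\sigma)$ that are either in $C$, or whose $F$-image already makes some face of $\sigma$ non-maximal — and verify that $\Link_X(\sigma)$ is fibered over $\Link_Y(\bar\sigma)$ with core $C'$ in the precise sense of Definition~\ref{def:fibersCMsplit}; here one must be careful to check condition (2), using that a simplex of $Y$ containing $\bar\sigma$ is maximal iff the corresponding simplex of $\Link_Y(\bar\sigma)$ is maximal. The subtle case is when $F$ collapses part of $\sigma$, i.e.\ $j<k$: then the fiber direction of $\Link_X(\sigma)$ over a vertex of $\Link_Y(\bar\sigma)$ picks up extra room, but this only helps connectivity, and in fact one can reduce to the faithful case by a preliminary observation that $\Link_X(\sigma)$ depends on $\sigma$ only through $F(\sigma)$ up to taking further links/joins. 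Granting the fibered structure, an induction on $d$ — with the base cases $d=0,1$ dispatched by the ``low dimensions'' remarks (a $0$-dimensional complex, resp.\ a connected $1$-dimensional complex) — gives that $\Link_X(\sigma)$ is CM of dimension $d-k-1$, which is in particular $(d-k-2)$-connected; combined with purity this completes the verification that $X$ is CM of dimension $d$.

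The main obstacle I expect is the bookkeeping around condition (2) when $F$ is not injective on simplices: correctly defining the core $C'$ for the link and matching up ``maximality in $Y$'' with ``maximality in $\Link_Y(\bar\sigma)$'' so that the inductive hypothesis applies cleanly. The connectivity inputs (joins of discrete sets, Quillen fiber lemmas, PL Morse theory as in the proof of Lemma~\ref{lemma:pushing}) are all standard; it is the careful tracking of how the ``core'' behaves under passage to links, and the degenerate-fiber case, that will require the most attention.
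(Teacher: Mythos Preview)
Your overall strategy—induction on $d$, showing that $\Link_X(\sigma)$ is fibered over $\Link_Y(F(\sigma))$, and invoking a Quillen-type poset-fiber theorem for the sphericity of $X$—is exactly the route the paper takes. Two points need correction or sharpening.

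First, your ``main obstacle'' is a phantom. Condition (1) of Definition~\ref{def:fibersCMsplit} says that a $(k+1)$-element subset $U\subset X^{(0)}$ is a $k$-simplex only if $F(U)$ is a $k$-simplex of $Y$; in particular $|F(U)|=k+1$, so $F$ is \emph{automatically injective on every simplex} and the case $j<k$ never occurs. This collapses your bookkeeping: for $\sigma\in X$, the link $\Link_X(\sigma)$ is fibered over $\Link_Y(F(\sigma))$, fully fibered if $\sigma\cap C\neq\emptyset$, and with core $C\cap \Link_X(\sigma)^{(0)}$ otherwise (this is Lemma~\ref{lemma:linksfibered} in the paper). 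No hybrid description of $C'$ is needed.

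Second, your sphericity argument is too vague to go through as stated. To apply Quillen's Theorem~\ref{theorem:quillen} to $F\colon \Poset(X)\to \Poset(Y)$ you need each fiber $F_{\leq \tau}$ to be \emph{CM} of dimension $\height(\tau)$, not merely highly connected; saying ``joins of discrete sets are connected enough'' does not give this. The paper resolves this by first proving the lemma in the special case $Y=\Delta^d$: here $X$ sits inside the join $Z=F^{-1}(y_0)\ast\cdots\ast F^{-1}(y_d)$, agreeing with $Z$ through dimension $d-1$, and differing only by the absence of those $d$-simplices that miss $C$ (there may be many such, not ``one''). For each missing $\sigma^d=\{x_0,\ldots,x_d\}$ one picks core lifts $x'_i\in C\cap F^{-1}(y_i)$ and observes that the full subcomplex on $\{x_0,x'_0,\ldots,x_d,x'_d\}$ is a $d$-sphere all of whose top faces except $\sigma^d$ lie in $X$, giving an explicit disk bounding $\partial\sigma^d$. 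This establishes the simplex case; since every fiber $F_{\leq \tau}$ is itself a complex fibered over the single simplex $\tau$, the simplex case shows these fibers are CM, and Quillen's theorem finishes the general case.
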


To prove Lemma~\ref{lemma:fibersCMsplit} we will make use of a theorem of Quillen that describes the topology of a poset map in terms of the topology of the fibers.

Given a poset $A$, recall that the \emph{height} $\height(a)$ of an element $a$
is the largest $k$ for which there exists a chain $a_0\lneq a_1\lneq \cdots \lneq a_k=a$. 
When a $k$-simplex $\sigma^k\in X$ is regarded as an element of the poset $\Poset(X)$, it has height $k$.
Given a map of posets $F\colon A\to B$ and element $b\in B$, the fiber $F_{\leq b}$ 
is the subposet of $A$ defined by $F_{\leq b}\coloneq \{a\in A \,|\, F(a)\leq b\}$.
 
A poset map $F\colon A\to B$ is {\em strictly increasing} if $a<a'$ implies $F(a)<F(a')$.
Finally, a poset $P$ is said to be CM of dimension $d$ if its geometric realization $\abs{P}$ is CM of dimension $d$ as a simplicial complex.  A simplicial complex $X$ is CM of dimension $d$ if and only if $\Poset(X)$ is CM of dimension $d$ (this is not automatic since being CM is not homeomorphism-invariant; nevertheless it follows from \cite[(8.5)]{QuillenPoset}). 

\begin{theorem}[{Quillen~\cite[Theorem 9.1, Corollary 9.7]{QuillenPoset}}]
\label{theorem:quillen}
Let $F\colon A\to B$ be a strictly increasing map of posets.  Assume that $B$ 
is CM of dimension $d$ and that for all $b\in B$, the fiber $F_{\leq b}$ 
is CM of dimension $\height(b)$.  Then $A$ is CM of dimension $d$ and $F_{\ast}\colon \widetilde{\HH}_d(A;\Z)\to \widetilde{\HH}_d(B;\Z)$ is surjective.
\end{theorem}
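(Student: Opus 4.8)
The plan is to induct on the dimension $d$, proving simultaneously that $A$ is CM of dimension $d$ and that $F_\ast\colon\widetilde{\HH}_d(A;\Z)\to\widetilde{\HH}_d(B;\Z)$ is surjective. The base case $d=0$ is immediate: then $B$ is a nonempty discrete poset, each fiber $F_{\leq b}=F^{-1}(b)$ is nonempty, $A=\bigsqcup_b F^{-1}(b)$ is $0$-dimensional, and $F_\ast$ is onto in degree $0$. For the inductive step, I would first observe that $A$ is \emph{pure} of dimension $d$: since $F$ is strictly increasing, a chain of $A$ maps to a chain of $B$ of equal length, so $\dim A\leq d$; and any chain of $A$ lies in some fiber $F_{\leq b}$ over a \emph{maximal} $b\in B$ (extend the top of its image upward in $B$, which then has height $d$), and that fiber is CM of dimension $\height(b)=d$ hence pure, so the chain extends to a $d$-chain of $A$. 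Granting purity, it follows formally (the link of a simplex being the link of a sub-simplex inside the link of one of its vertices) that $A$ is CM of dimension $d$ as soon as $A$ is $(d-1)$-connected and $\Link_A(a)$ is CM of dimension $d-1$ for every vertex $a$. So three things remain: $A$ is $(d-1)$-connected; all vertex links are CM of dimension $d-1$; and $F_\ast$ is onto in degree $d$.

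The vertex links are the easier point, and the key observation is that, since $F$ is strictly increasing, no $a'>a$ can satisfy $F(a')\leq F(a)$, so $a$ is automatically a \emph{maximal} element of the fiber $F_{\leq F(a)}$. Decompose $\Link_A(a)\iso\abs{A_{<a}}\ast\abs{A_{>a}}$. Every chain of $A$ below $a$ has image $\leq F(a)$, hence lies in $F_{\leq F(a)}$, so $A_{<a}=(F_{\leq F(a)})_{<a}$; as $a$ is maximal in $F_{\leq F(a)}$ this is exactly the vertex link $\Link_{F_{\leq F(a)}}(a)$, which is CM of dimension $\height_B(F(a))-1$ by Proposition~8.6 of \cite{QuillenPoset}, and purity of $F_{\leq F(a)}$ also shows $\height_A(a)=\height_B(F(a))$. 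For the other factor, restricting $F$ gives a strictly increasing map $A_{>a}\to B_{>F(a)}$ whose target is the join-factor $B_{>F(a)}$ of $\Link_B(F(a))$, hence CM of dimension $d-\height_B(F(a))-1<d$, and whose fiber over $b'$ is the upper link $(F_{\leq b'})_{>a}$ of the vertex $a$ in the CM complex $F_{\leq b'}$, which is CM of the required dimension (here one uses that CM posets are graded, so heights add along intervals). By the inductive hypothesis $A_{>a}$ is CM of dimension $d-\height_B(F(a))-1$, and $\Link_A(a)$, being the join of two CM complexes of dimensions summing to $d-2$, is CM of dimension $d-1$.

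The remaining task, and the crux of the proof, is to show $A$ is $(d-1)$-connected and $F_\ast$ is onto in degree $d$. This is exactly where one needs $B$ to be Cohen--Macaulay and not merely $(d-1)$-connected: the fibers $F_{\leq b}$ are only $(\height(b)-1)$-connected, which for $b$ of small height is far too weak to feed into Quillen's fiber theorem directly. I would argue by a poset-fiber / homotopy-colimit analysis: filter $A$ along an enumeration of the elements of $B$ refining the height order and apply PL Morse theory as in \cite{BestvinaPL}; as one enlarges the down-set $B'\subseteq B$ by a single element $b$, the subposet $F^{-1}(B')$ grows by attaching, along each $a\in F^{-1}(b)$, a cone on a complex homotopy equivalent to a join of the CM complex $A_{<a}$ with the already-constructed piece lying over $B'_{<b}$. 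The Cohen--Macaulayness of $B$ makes these down-sets of $B$ as highly connected as their dimension allows, while the Cohen--Macaulayness of each $F_{\leq c}$ concentrates $\widetilde{\HH}_\ast(F_{\leq c})$ in degree $\height(c)$; tracking this — equivalently, running the Leray/Mayer--Vietoris spectral sequence of the poset map $F$, whose $E^2$-page is the homology of $B$ with coefficients in the functor $b\mapsto\widetilde{\HH}_q(F_{\leq b})$ — forces $\widetilde{\HH}_q(A)=0$ for $q<d$ and produces the surjection $\widetilde{\HH}_d(A)\twoheadrightarrow\widetilde{\HH}_d(B)$ compatibly with $F$. I expect this last bookkeeping — pinning down exactly how the CM structure of $B$ propagates connectivity while the fibers contribute only in top degree — to be the main obstacle; it is precisely why the hypothesis on $B$ cannot be weakened from Cohen--Macaulay to highly connected.
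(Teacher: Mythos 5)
The paper does not prove this statement at all: it is quoted verbatim from Quillen \cite[Theorem 9.1, Corollary 9.7]{QuillenPoset} and used as a black box. So the comparison is really between your blind attempt and Quillen's proof, not the paper's.

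Your framework is the right one and much of it is correct. Inducting on $d$, establishing purity of $A$, and reducing the link condition to vertex links are all fine. The vertex-link computation is in fact nicely done: the identification $A_{<a}=(F_{\leq F(a)})_{<a}$ with $a$ maximal in $F_{\leq F(a)}$, the deduction that $A_{<a}$ is CM of dimension $\height_B(F(a))-1$, the observation that $A_{>a}\to B_{>F(a)}$ inherits the hypotheses (using that CM posets are graded so heights add along intervals, and that a join factor of a CM complex is CM), and the conclusion via join of two CM complexes --- all of this is correct and would survive scrutiny.

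The genuine gap is exactly where you flag it: you never actually prove that $A$ is $(d-1)$-connected or that $F_\ast$ is onto in degree $d$, and this is the entire content of the theorem. Your Morse-theoretic sketch is also not quite right as stated. When a new element $a\in F^{-1}(b)$ is added over a down-set $B'\cup\{b\}$ enumerated by height, its link in the already-built complex is \emph{just} $A_{<a}$ --- there is no join with anything ``over $B'_{<b}$'', because every $a'>a$ has $F(a')>b$ and hence lies outside $F^{-1}(B'\cup\{b\})$ (a down-set not containing $b$ contains nothing $\geq b$). So one is coning off a complex that is only $(\height(b)-2)$-connected, and for $b$ of small height this gives essentially no control; the cells you attach are of dimension roughly $\height(b)$, so a naive filtration argument cannot conclude $(d-1)$-connectivity. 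The spectral-sequence picture you gesture at (an $E^2$-page $\HH_p(B;\,b\mapsto\widetilde{\HH}_q(F_{\leq b}))$) is morally what Quillen uses, but making it precise --- in particular, getting the vanishing in total degree $<d$ and extracting the edge surjection, using that the fibers are CM of the \emph{exact} dimension $\height(b)$ rather than merely highly connected --- is the substance of Quillen's Theorem 9.1 and Corollary 9.7, and you explicitly acknowledge that you have not carried it out. As it stands the proposal records the correct reductions but omits the theorem's actual proof.
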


We also record the following observation, which follows directly from Definition~\ref{def:fibersCMsplit}.
\begin{lemma}
\label{lemma:linksfibered}
Let  $X$ be fibered over $Y$ by $F$ with core $C$, and consider a simplex $\sigma\in X$. 
\begin{itemize}[nosep]
\item If $\sigma\cap C\neq \emptyset$, then the link $\Link_X(\sigma)$ is fully fibered over $\Link_Y(F(\sigma))$ by $F$.
\item If $\sigma\cap C=\emptyset$, then the link $\Link_X(\sigma)$ is fibered over $\Link_Y(F(\sigma))$ by $F$ 
with core $C\cap \Link_X(\sigma)^{(0)}$.
\end{itemize}
\end{lemma}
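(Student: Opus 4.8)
The plan is to unwind Definition~\ref{def:fibersCMsplit} directly, using two elementary observations about $F$. First, $F$ is injective on every simplex of $X$: a $k$-simplex $U$ of $X$ has $F(U)$ a $k$-simplex of $Y$ by condition~(1) of Definition~\ref{def:fibersCMsplit}, so $\abs{F(U)} = \abs{U}$. It follows that for any simplex $\sigma \in X$ the map $F$ carries $\Link_X(\sigma)$ into $\Link_Y(F(\sigma))$: if $\tau \in \Link_X(\sigma)$ then $\sigma \cup \tau \in X$, so $F(\sigma \cup \tau) = F(\sigma) \cup F(\tau)$ is a simplex of $Y$ with $F(\sigma)$ and $F(\tau)$ disjoint, i.e.\ $F(\tau) \in \Link_Y(F(\sigma))$. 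Second, the assignment $\eta \mapsto F(\sigma) \cup \eta$ identifies the simplices of $\Link_Y(F(\sigma))$ with the simplices of $Y$ containing $F(\sigma)$, preserving inclusions in both directions; in particular $\eta$ is a \emph{maximal} simplex of $\Link_Y(F(\sigma))$ if and only if $F(\sigma) \cup \eta$ is a maximal simplex of $Y$.

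Now fix $\sigma \in X$ and write $X' = \Link_X(\sigma)$ and $Y' = \Link_Y(F(\sigma))$; the proposed core is $C' = {X'}^{(0)}$ when $\sigma \cap C \neq \emptyset$, and $C' = C \cap {X'}^{(0)}$ when $\sigma \cap C = \emptyset$. I first verify that $F(C') = {Y'}^{(0)}$. Given a vertex $w$ of $Y'$, choose $c \in C$ with $F(c) = w$, which exists since $F(C) = Y^{(0)}$. Then $c \notin \sigma$, since $w \notin F(\sigma)$; moreover $F(\sigma \cup \{c\}) = F(\sigma) \cup \{w\}$ is a simplex of $Y$ because $w$ is a vertex of $Y'$, while $\sigma \cup \{c\}$ meets $C$ in $c$, so condition~(2) of Definition~\ref{def:fibersCMsplit} is satisfied and $\sigma \cup \{c\} \in X$. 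Hence $c \in {X'}^{(0)} \cap C$, which lies in $C'$ in both cases, giving $F(C') \supseteq {Y'}^{(0)}$; the reverse inclusion holds because $F$ maps $X'$ into $Y'$.

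It remains to check the simplex condition. Let $U \subset {X'}^{(0)}$ have $k+1$ elements. In each case below $F$ is injective on $\sigma$ and on $U$ with $F(\sigma) \cap F(U) = \emptyset$, so $F(\sigma \cup U) = F(\sigma) \cup F(U)$ has $\abs{\sigma} + k + 1$ vertices. Chaining definitions, $U$ is a simplex of $X'$ $\iff$ $\sigma \cup U \in X$ $\iff$ (by Definition~\ref{def:fibersCMsplit} for $X$) $F(\sigma \cup U)$ is a simplex of $Y$ and, if it is maximal in $Y$, then $(\sigma \cup U) \cap C \neq \emptyset$. By the two observations above, ``$F(\sigma \cup U)$ a simplex of $Y$'' $\iff$ ``$F(U)$ a simplex of $Y'$'', and ``$F(\sigma \cup U)$ maximal in $Y$'' $\iff$ ``$F(U)$ maximal in $Y'$''. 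Finally, the clause $(\sigma \cup U) \cap C \neq \emptyset$ is precisely the clause $U \cap C' \neq \emptyset$ demanded by Definition~\ref{def:fibersCMsplit} for $X'$: if $\sigma \cap C \neq \emptyset$ then both hold automatically---the second since $C' = {X'}^{(0)} \supseteq U$---so condition~(2) is vacuous and $X'$ is \emph{fully} fibered over $Y'$; if $\sigma \cap C = \emptyset$ then $(\sigma \cup U) \cap C = U \cap C = U \cap C'$, so the two clauses coincide. This establishes both bullet points. I do not expect a genuine obstacle here; the only point requiring care is the bookkeeping that transports ``maximal simplex of $Y$'' to ``maximal simplex of $Y'$'', together with the small observation in the second paragraph that any $C$-preimage of a vertex of $Y'$ already lies in $X'$, which is exactly what makes the core surject onto ${Y'}^{(0)}$.
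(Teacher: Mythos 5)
Your proof is correct. Note that the paper itself does not prove Lemma~\ref{lemma:linksfibered}; it merely asserts that the lemma ``follows directly from Definition~\ref{def:fibersCMsplit},'' so there is no paper argument to compare against. What you have written is exactly the kind of careful unwinding that the paper is implicitly invoking: you check (a) that $F$ carries $\Link_X(\sigma)$ into $\Link_Y(F(\sigma))$, (b) that the proposed core maps onto the vertex set of $\Link_Y(F(\sigma))$ (the key observation being that a $C$-preimage $c$ of a vertex $w \in \Link_Y(F(\sigma))^{(0)}$ already lies in $\Link_X(\sigma)^{(0)}$, because $\sigma \cup \{c\}$ satisfies both clauses of the definition), and (c) that the simplex criterion transports cleanly because $\eta \mapsto F(\sigma) \cup \eta$ identifies $\Link_Y(F(\sigma))$ with the simplices of $Y$ over $F(\sigma)$, preserving maximality.

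One small point of presentation rather than substance: the sentence ``In each case below $F$ is injective on $\sigma$ and on $U$ with $F(\sigma) \cap F(U) = \emptyset$'' would benefit from being split into the two directions of the biconditional, since the justification differs. In the forward direction the injectivity and disjointness come from $\sigma \cup U$ being a simplex of $X$ (so condition~(1) of Definition~\ref{def:fibersCMsplit} applies to it); in the reverse direction they come from the hypothesis that $F(U)$ is a $k$-simplex of $\Link_Y(F(\sigma))$, which forces $|F(U)| = k+1$ and $F(U) \cap F(\sigma) = \emptyset$. You clearly know this---it is implicit in what follows---but stating it explicitly would remove the appearance of circularity. Otherwise the argument is complete and matches the intent of the paper.
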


\begin{proof}[Proof of Lemma~\ref{lemma:fibersCMsplit}]
We prove the lemma by induction on $d$.  The base case $d=0$ is trivial, so assume that $d>0$ and that the
lemma is true for all smaller values of $d$.  We must prove two things about $X$: that for every $\sigma^k\in X$ the link $\Link_X(\sigma^k)$ is CM of dimension $d-k-1$, and that $X$ itself is $d$-spherical. By condition (1), the $k$-simplex $\sigma^k\in X$ projects to a $k$-simplex $F(\sigma^k)\in Y$. By Lemma~\ref{lemma:linksfibered}, the complex $\Link_X(\sigma^k)$ is fibered over $\Link_Y(F(\sigma^k))$ by $F$ (with some core). Since $Y$ is CM of dimension $d$, the complex $\Link_Y(F(\sigma^k))$ is CM of dimension $d-k-1$. Applying the inductive hypothesis, we conclude that $\Link_X(\sigma^k)$ is CM of dimension $d-k-1$, as desired.

It remains to prove that $X$ is $d$-spherical. We begin by proving the lemma in the special case when $X$ is fibered over a single simplex $Y=\Delta^d$ with vertices $\{y_0,\ldots,y_d\}$.  Define $Z$ to be the complex with $Z^{(0)}=X^{(0)}$
and where $U \subset Z^{(0)}$ forms a simplex if and only if $U$ contains at most one element from each
$F^{-1}(y_i)$.  The complex $Z$ is the join of the $0$-dimensional complexes $F^{-1}(y_i)$.
Since $F(C)=Y^{(0)}$, each $F^{-1}(y_i)$ is nonempty and thus $0$-spherical.  The join of an $r$-spherical 
and an $s$-spherical complex is $(r+s+1)$-spherical \cite[Example 8.1]{QuillenPoset},
so the $(d+1)$-fold join $Z=F^{-1}(y_0)\ast\cdots\ast F^{-1}(y_d)$ is $d$-spherical.

Observe that $X \subset Z$.  Since $Z$ is $d$-spherical, it is enough to prove that the
pair $(Z,X)$ is $(d-1)$-connected.
Since $Y$ is itself a $d$-simplex, the only maximal simplex of $Y$ is $Y$ itself, and hence
condition (2) applies only to $d$-simplices of $X$.  Therefore $X$ and $Z$ coincide in dimensions up to $d-1$ 
and $Z$ is obtained from $X$ by adding certain $d$-simplices $\sigma^d\in Z$ with $\sigma^d\not\in X$. 
It thus suffices to show for each such $\sigma^d$ that the boundary $\partial \sigma^d\subset X$ is null-homotopic in $X$.

By condition (1), such a $d$-simplex can be written as $\sigma^d=\{x_0,\ldots,x_d\}$ with $F(x_i)=y_i$ for $0 \leq i \leq d$, 
and by condition (2) we must have $x_i\not\in C$ for all $0 \leq i \leq d$.  Since $F(C)= Y^{(0)}$, for each $0 \leq i \leq d$
we can choose $x'_i\in C$ with $F(x'_i)=y_i$.  Consider the full subcomplex of $Z$ on $\{x_0,x'_0,\ldots,x_d,x'_d\}$.  By
construction, this is the $(d+1)$-fold join of the $0$-spheres $\{x_i,x_i'\}$, and thus is homeomorphic to a $d$-sphere.  Every simplex of this $d$-sphere \emph{except} 
$\sigma$ contains some $x'_i$, and thus lies in $X$.  Therefore the full subcomplex of $X$ on $\{x_0,x'_0,\ldots,x_d,x'_d\}$ 
is a subdivided $d$-disk $D$ with $\partial D=\partial \sigma^d$, as desired.

This proves that $X$ is $d$-spherical if it is fibered over a $d$-simplex with any core; combined with the first paragraph, $X$ is CM of dimension $d$ in this case.

We now prove the lemma for a general CM complex $Y$ of dimension $d$.  By condition (1), the map $F$ induces a height-preserving poset map $F\colon \Poset(X)\to \Poset(Y)$; we will verify the conditions of Theorem~\ref{theorem:quillen} for $F$.

Fix a simplex $\tau^k\in \Poset(Y)$. Let $T\simeq \Delta^k$ be the subcomplex of $Y$ determined by $\tau^k$ and let $\Ttop$ be the subcomplex of $X$ consisting of $\sigma^\ell\in X$ with $F(\sigma^\ell)\subset \tau^k$.  
The fiber $F_{\leq \tau^k}$ is the subposet of $\Poset(X)$ consisting of simplices of $\Ttop$, so $F_{\leq\tau^k}=\Poset(\Ttop)$. Since $F$ is height-preserving, if $k<d$ then no $d$-simplex is contained in $\Ttop$, so condition (2) is vacuous; in this case $\Ttop$ is fully fibered over $T\simeq \Delta^k$ by $F$. If $\tau^d$ is a $d$-simplex,  condition (2) is not vacuous, and in this case $\Ttop$ is fibered over $T\simeq\Delta^d$ by $F$ with core $C\cap \Ttop^{(0)}$. In either case the base $T$ is a single simplex, so the special case of the lemma implies that $\Ttop$ is CM of dimension $\height(\tau^k)=k$, and thus so is $F_{\leq\tau^k}=\Poset(\Ttop)$.
Since $\Poset(Y)$ is CM of dimension $d$ by assumption, this verifies the hypotheses of Theorem~\ref{theorem:quillen} for the map $F\colon \Poset(X)\to \Poset(Y)$. Theorem~\ref{theorem:quillen} implies that $\Poset(X)$ is CM of dimension $d$, so $X$ is CM of dimension $d$ as well.
\end{proof}

\subsubsection{A criterion for being fibered}
The final ingredient we need is the following condition to verify that a complex is fibered over another.

\begin{lemma}
\label{lemma:fibersonlyd}
Let $X$ and $Y$ be $d$-dimensional simplicial complexes with the property that every simplex is contained in a $d$-simplex. Let $F\colon X\to Y$ be a simplicial map and $C\subset X^{(0)}$ be a subset such that the following two conditions hold.
\begin{enumerate}[label={\normalfont (\roman*)}, topsep=2pt,itemsep=1pt,parsep=1pt]
\item A subset $U=\{x_0,\ldots,x_d\}$ forms a $d$-simplex of $X$\newline  $\iff$ $F(U)$ is a $d$-simplex of $Y$ and $U\cap C\neq \emptyset$.
\item For each $d$-simplex $\tau^d\in Y$ there exists $U\subset C$ with $F(U)=\tau^d$.
\end{enumerate}
Then $X$ is fibered over $Y$ by $F$ with core $C$.
\end{lemma}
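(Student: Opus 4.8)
The plan is to verify directly the two requirements in Definition~\ref{def:fibersCMsplit}: that $F(C)=Y^{(0)}$, and that a subset $U=\{x_0,\ldots,x_k\}\subset X^{(0)}$ is a $k$-simplex of $X$ precisely when $F(U)$ is a $k$-simplex of $Y$ and $U\cap C\neq\emptyset$ whenever $F(U)$ is a maximal simplex of $Y$. The single structural fact I would isolate first and reuse everywhere is that, because $Y$ is $d$-dimensional and every simplex of $Y$ lies in a $d$-simplex, the maximal simplices of $Y$ are exactly its $d$-simplices (and likewise for $X$). Thus condition~(2) of Definition~\ref{def:fibersCMsplit} has content only for $d$-simplices — which is exactly the case regulated by hypothesis~(i) — while in all lower dimensions the criterion to be checked reduces to ``$U$ is a simplex of $X$ iff $F(U)$ is a simplex of $Y$''. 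The surjectivity $F(C)=Y^{(0)}$ is immediate: any vertex $y\in Y$ lies in some $d$-simplex $\tau^d$, and hypothesis~(ii) gives $U\subset C$ with $F(U)=\tau^d\ni y$, so $y\in F(C)$.

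For the forward direction, I would take a $k$-simplex $U$ of $X$ and extend it (using the hypothesis on $X$) to a $d$-simplex $\widehat U\supset U$ of $X$. By~(i), $F(\widehat U)$ is a $d$-simplex of $Y$; since $|\widehat U|=d+1=|F(\widehat U)|$, the map $F$ is injective on $\widehat U$, hence on $U$, so $F(U)$ is a $k$-dimensional face of $F(\widehat U)$, i.e.\ a $k$-simplex of $Y$. If in addition $F(U)$ is maximal in $Y$, then $F(U)$ is a $d$-simplex, which forces $k=d$ and $U=\widehat U$; then~(i) yields $U\cap C\neq\emptyset$. For the reverse direction, suppose $F(U)$ is a $k$-simplex of $Y$ (so $F$ is injective on $U$) and $U\cap C\neq\emptyset$ whenever $F(U)$ is maximal. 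Extend $F(U)$ to a $d$-simplex $\tau^d$ of $Y$, apply~(ii) to obtain $\widehat V\subset C$ with $F(\widehat V)=\tau^d$, and pick $v_{k+1},\ldots,v_d\in\widehat V\subset C$ mapping to the vertices of $\tau^d$ not in $F(U)$. Set $W=U\cup\{v_{k+1},\ldots,v_d\}$: these $d+1$ vertices are genuinely distinct (the $v_i$ have pairwise distinct $F$-images, distinct from $F(U)$), $F$ maps $W$ bijectively onto the $d$-simplex $\tau^d$, and $W\cap C\neq\emptyset$ — if $k<d$ some $v_i\in C$ appears, while if $k=d$ then $W=U$ and $F(U)=\tau^d$ is maximal, so $U\cap C\neq\emptyset$ by assumption. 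By~(i), $W$ is a $d$-simplex of $X$, hence its face $U$ is a $k$-simplex of $X$, as required.

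This argument is essentially a bookkeeping unwinding of the definitions, so there is no serious obstacle; the only points that need to be stated carefully are the equivalence ``maximal simplex $=$ top-dimensional simplex'' in both $X$ and $Y$ (which is what lets hypotheses~(i) and~(ii), phrased only for $d$-simplices, control condition~(2) of Definition~\ref{def:fibersCMsplit}), and the verification that the auxiliary $d$-simplex $W$ built in the reverse direction really has $d+1$ distinct vertices and meets $C$, splitting into the cases $k<d$ and $k=d$.
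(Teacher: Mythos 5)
Your argument is correct and follows essentially the same route as the paper's proof: reduce the top-dimensional case to hypothesis~(i), derive $F(C)=Y^{(0)}$ from~(ii), and for lower-dimensional $U$ extend $F(U)$ to a $d$-simplex of $Y$ and lift the missing vertices from $C$ to build a $d$-simplex of $X$ containing $U$. The only cosmetic differences are that you invoke~(ii) directly to produce the lifting set $\widehat{V}$ where the paper routes through the already-established fact $F(C)=Y^{(0)}$, and you re-verify the $k=d$ case in both directions rather than disposing of it at the outset.
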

\begin{proof}
The maximal simplices of $Y$ are precisely the $d$-simplices, so when $\abs{U}=d+1$ condition (i) is equivalent to conditions (1) and (2) of Definition~\ref{def:fibersCMsplit}. Since every vertex of $Y$ is contained in a $d$-simplex, condition (ii) implies that $F(C)=Y^{(0)}$. It remains to verify conditions (1) and (2) for subsets $U\subset X^{(0)}$ with $\abs{U}<d+1$.

Every simplex $\tau^k\in X$ is contained in a $d$-simplex $\sigma^d\in X$. By condition (i), its image $F(\sigma^d)$ is a $d$-simplex of $Y$, so $F(\tau^k)$ is a $k$-simplex of $Y$. Conversely, consider a subset $U=\{x_0,\ldots,x_k\}$ with $k<d$ such that $F(U)$ is a $k$-simplex of $Y$. Then $F(U)$ is contained in a $d$-simplex $\eta^d=F(U)\cup\{y_{k+1},\ldots,y_d\}\in Y$. Using the assumption that $F(C)=Y^{(0)}$, choose lifts $x_i\in C$ with $F(x_i)=y_i$ for $k+1\leq i\leq d$. By construction, $U'\coloneq\{x_0,\ldots,x_d\}$ has $F(U')=\eta^d$ and $U'\cap C\neq \emptyset$, so by condition (i) $U'$ is a $d$-simplex of $X$. It follows that $U$ is a $k$-simplex of $X$, as desired.
\end{proof}

\subsubsection{Completing the proof of Step 3}
We are now ready to prove Proposition~\ref{prop:BprimeB0}.

\begin{proof}[Proof of Proposition~\ref{prop:BprimeB0}]
We want to show that $(\PBg_n(I),\PB_n(0))$ is $(n-2)$-connected. We will prove this by applying Lemma~\ref{lemma:pushing} with $X = \PartialBasesGood_n(I)$ and $A = \PartialBases_n(0)$ and $m=n-2$. 

 To apply Lemma~\ref{lemma:pushing} we need to verify the two hypotheses: 
first, that $\PartialBases_n(0)$ is a full subcomplex of $\PartialBasesGood_n(I)$, and second, that for any $k$-simplex
$\sigma^k$ of $\PartialBasesGood_n(I)$ that is disjoint from $\PartialBases_n(0)$, the intersection
\[\Link_0(\sigma^k)\coloneq \PB_n(0)\cap \Link_{\PartialBasesGood_n(I)}(\sigma^k)\] is $(n-k-3)$-connected. 

We begin by proving that $\PB_n(0)$ is the full subcomplex of $\PB_n(\O)$ on the vertex set \[\PB_n(0)^{(0)}\coloneq \left\{v\in \PB_n(\O)\,|\,L(v)=0\text{ or }L(v) = 1\right\}.\]
Let $\sigma=\{v_1,\ldots,v_k\}\in \PB_n(\O)$ be a partial basis with $v_i\in \PB_n(0)^{(0)}$ for $1\leq i\leq k$. We have either $I_\sigma=\O$ (if some $v_i$ has $L(v_i)=1$) or $I_\sigma=(0)$ (if all $v_i$ have $L(v_i)=0$). By Proposition~\ref{prop:charBprime}(2), $\sigma$ is contained in a good basis. By Proposition~\ref{prop:charBprime}(1)(ii) this implies that $\sigma$ is contained in a good $(0)$-basis, so $\sigma\in \PB_n(0)$. This proves that $\PB_n(0)$ is a full subcomplex of $\PB_n(\O)$, and thus of $\PBg_n(I)$, verifying the first hypothesis. 

For the second hypothesis, consider a $k$-simplex $\sigma^k\in \PBg_n(I)$ that is disjoint from $\PB_n(0)^{(0)}$; we will prove that $\Link_0(\sigma^k)$ is CM of dimension $n-k-2$. Using Proposition~\ref{prop:charBprime}(1)(iii), choose a good complement $W$.  By definition, we have $\O^n=V_\sigma\oplus W$ and $L(W)=\O$. Let $\pi\colon \O^n\onto W$ be the projection with kernel $V_\sigma$. 

We will prove the following claims, using the terminology of Definition~\ref{def:fibersCMsplit}.
\begin{enumerate}[nosep]
\item If $I_\sigma\neq \O$, then $\Link_0(\sigma)$ is fully fibered over $\PB_W(I_\sigma)$ by $\pi$.
\item 
If $I_\sigma=\O$, then $\Link_0(\sigma)$ is fibered over $\PB_W(\O)$ by $\pi$ with core \[C\coloneq \{ v\in \Link_0(\sigma)\,|\, L(v)=1\}\subset \Link_0(\sigma)^{(0)}.\]
\end{enumerate}
The bulk of the argument does not depend on whether $I_\sigma=\O$ or not, so we begin without making any assumption on $I_\sigma$.

By definition, every simplex of $\PB_W(I_\sigma)$ is contained in an $(n-k-2)$-simplex, corresponding to an $I_\sigma$-basis of $W$. Similarly, if $\eta\in \Link_0(\sigma^k)$ then $\sigma\cup\eta\in \PBg_n(I)$ is a partial good $I$-basis.  By Proposition~\ref{prop:charBprime}(1)(ii)  we can enlarge this to a good $I$-basis $\sigma\cup\tau$ with $\tau\in \PB_n(0)$. Therefore every simplex $\eta\in \Link_0(\sigma^k)$ is contained in an $(n-k-2)$-simplex $\tau^{n-k-2}\in \Link_0(\sigma^k)$. This means that in verifying the claim, we can work only with $(n-k-2)$-simplices, and appeal to Lemma~\ref{lemma:fibersonlyd} for the general case.

A subset $U=\{u_{k+2},\ldots,u_n\}\subset \PB_n(0)^{(0)}$ forms an $(n-k-2)$-simplex of $\Link_0(\sigma^k)$ if $\sigma\cup U=\{v_1,\ldots,v_{k+1}, u_{k+2},\ldots,u_n\}$ is a good $I$-basis, or equivalently if $\sigma\cup U$ is a good basis. Since $\sigma$ is a basis of $V_\sigma$ and $W$ is a complement to $V_\sigma$, we know that $\sigma\cup U$ is a basis for $\O^n$ if and only if $\pi(U)$ is a basis for $W$. Moreover, any $u\in \PB_n(0)$ has $L(u)=0$ or $L(u)=1$, which implies $L(\pi(u))\equiv 0\bmod{I_\sigma}$ or $L(\pi(u))\equiv 1\bmod{I_\sigma}$. Therefore $\sigma\cup U$ is a basis for $\O^n$ if and only if $\pi(U)$ is an $I_\sigma$-basis for $W$.

We now prove the two cases of the claim separately. First, assume that $I_\sigma=\O$. Since $\sigma$ is disjoint from $\PB_n(0)$ by assumption, a basis $\sigma\cup U$ with $U\subset \PB_n(0)^{(0)}$ is good if and only if $U\cap C\neq \emptyset$. This means that $U$ forms an $(n-k-2)$-simplex of $\Link_0(\sigma)$ if and only if $\pi(U)$ is an $(n-k-2)$-simplex of $\PB_W(\O)$ and $U\cap C\neq \emptyset$, verifying condition (i) of Lemma~\ref{lemma:fibersonlyd}.

It remains to verify condition (ii) of Lemma~\ref{lemma:fibersonlyd}. Given a basis $\tau=\{w_{k+2},\ldots,w_n\}$ of $W$, choose for each $i=k+2,\ldots,n$ some $v'_i\in V_\sigma$ with $L(v'_i)=1-L(w_i)$, and define $u_i=w_i+v'_i$. By construction we have $L(u_i)=1$ and $\pi(u_i)=\pi(w_i)+\pi(v'_i)=w_i$. Setting $U=\{u_{k+2},\ldots,u_n\}$ we see that $\pi(U)$ is the basis $\tau$ of $W$, and therefore $U\subset C$. We can therefore apply Lemma~\ref{lemma:fibersonlyd} to conclude that $\Link_0(\sigma)$ is  fibered over $\PB_W(I_\sigma)$ by $\pi$ with core $C$, verifying Claim 1.

Second, assume that $I_\sigma\neq \O$. In this case \emph{any} basis $\sigma\cup U$ with $U\subset \PB_n(0)^{(0)}$ must be good, because if $L(u_i)=0$ for all $k+2\leq i\leq n$ we would have $L(\O^n)=I_\sigma\neq \O$. Therefore $\sigma\cup U$ is a good basis if and only if $\pi(U)$ is an $I_\sigma$-basis of $W$. This verifies condition (i) of Lemma~\ref{lemma:fibersonlyd}, so it remains to check condition (ii).

Let  $\tau=\{w_{k+2},\ldots,w_n\}$ be an $I_\sigma$-basis of $W$. For $i=k+2,\ldots,n$, if $L(w_i)\equiv 0\bmod{I_\sigma}$, choose $v'_i\in V_\sigma$ with $L(v'_i)=-L(w_i)$; if $L(w_i)\equiv 1\bmod{I_\sigma}$, choose $v'_i\in V_\sigma$ with $L(v'_i)=1-L(w_i)$. In either case, define $u_i=w_i+v'_i$; by construction, we have $L(u_i)=0$ or $L(u_i)=1$ and  $\pi(u_i)=\pi(w_i)+\pi(v'_i)=w_i$. Setting $U=\{u_{k+2},\ldots,u_n\}$ we see that $\pi(U)$ is the $I_\sigma$-basis $\tau$ of $W$; therefore $U\subset \Link_0(\sigma)$. We again apply Lemma~\ref{lemma:fibersonlyd} to conclude that $\Link_0(\sigma)$ is fully fibered over $\PB_W(I_\sigma)$ by $\pi$, verifying Claim 2.

Now that Claims 1 and 2 are established, we finish the proof of the proposition. Since $W\iso \O^{n-k-1}$,  Remark~\ref{remark:nonuniqueBnI} yields an isomorphism $\PB_W(I_\sigma)\iso \PB_{n-k-1}(I_\sigma)$. Our hypothesis guarantees that $\PB_{n-k-1}(I_\sigma)$ is CM of dimension $n-k-2$, so the same is true of $\PB_W(I_\sigma)$. We established in Claims 1 and 2 above that $\Link_0(\sigma)$ is fibered over $\PB_W(I_\sigma)$ by $\pi$ (either with core $C$ or with core $\Link_0(\sigma)^{(0)}$). Lemma~\ref{lemma:fibersCMsplit} thus implies that $\Link_0(\sigma)$ is CM of dimension $n-k-2$; in particular $\Link_0(\sigma)$ is $(n-k-3)$-connected. This verifies the hypothesis of Lemma~\ref{lemma:pushing} for the pair $(\PBg_n(I),\PB_n(0))$ with $m=n-2$; we conclude that $(\PBg_n(I),\PB_n(0))$ is $(n-2)$-connected, as desired.
\end{proof}
\begin{remark}
\label{remark:trouble}
Many of the ingredients of the proof that $\Link_0(\sigma^k)$ is $(n-k-3)$-connected were introduced only for this step; this step thereby accounts for much of the complexity of our proof of Theorem~\ref{maintheorem:partialbasescm}.
\begin{itemize}[nosep]
\item Even if we had only been interested in $\PB_n(\O)$, we were forced in the proof of Proposition~\ref{prop:BprimeB0} to consider the complex $\PB_{n-k-1}(I_\sigma)$. This is the entire reason that we introduced the complexes $\PB_n(I)$ relative to nontrivial ideals of $\O$.
\item In the proof of Proposition~\ref{prop:BprimeB0} we needed the assumption that $\PB_m(I_\sigma)$ was CM of dimension $m-1$ for $m<n$. This is the reason that we are forced to prove in Theorem~\ref{thm:BnICM} that $\PB_n(I)$ is Cohen--Macaulay of dimension $n-1$, rather than simply that $\PB_n(I)$ is $(n-1)$-spherical. This weaker statement would not suffice as an inductive hypothesis; Lemma~\ref{lemma:fibersCMsplit} is not true if one only assumes that $Y$ is $d$-spherical.
\item In the case $I_\sigma=\O$ above, we applied Lemma~\ref{lemma:fibersCMsplit} in a case where the core $C$ only contained some of the vertices of the domain. This is the reason that we introduced the notion of ``fibered with core $C$''; we will apply Lemma~\ref{lemma:fibersCMsplit} in Step 4 and in the proof of Theorem~\ref{maintheorem:partialbasescm} 
as well, but only in the fully fibered case.
\end{itemize}
\end{remark}

\subsection{Step 4: \texorpdfstring{$\PB_n(0)$}{Bn(0)} is \texorpdfstring{$(n-2)$-connected}{(n-2)-connected}}
\label{section:step4}

The main result of this section is the following proposition. 

\begin{proposition}
\label{prop:B0connected}
Let $\O$ be a Dedekind domain, and assume that $\PB_{n-1}(\O)$ is CM of dimension $n-2$. Then $\PB_n(0)$ is $(n-2)$-connected.
\end{proposition}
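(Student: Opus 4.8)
The plan is as follows. Fix a basis of $\O^n$ so that $L$ picks out the last coordinate, write $X \coloneq \PB_n(0)$, and let $A \subseteq X$ be the full subcomplex on the vertices $v$ with $L(v)=0$. By Lemma~\ref{lemma:partialbasissub}(b), $A$ is exactly the complex of partial bases of the free summand $\ker L \iso \O^{n-1}$, so $A \iso \PB_{n-1}(\O)$ is CM of dimension $n-2$ by hypothesis. A preliminary bootstrapping step, which I would carry out first: using that links of simplices in a CM complex are CM, and realizing $\PB_m(\O)$ (for $m < n-1$) as a full subcomplex and simplicial retract of $\Link_{\PB_{n-1}(\O)}(\rho)$ for $\rho$ a basis of a complementary summand to a rank-$m$ free summand $W$ with $\PB(W)\iso\PB_m(\O)$, one deduces by induction on $m$ that $\PB_m(\O)$ is CM of dimension $m-1$ for every $1 \le m \le n-1$. (Here one uses the routine fact that a full subcomplex of a CM complex of dimension $d$ which is a simplicial retract of it and is itself $d$-dimensional is again CM of dimension $d$; for the retractions occurring here, induced by coordinate projections, the restriction of the retraction to links behaves as needed, so the induction goes through.)

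Next I would introduce the contractible subcomplex $D \coloneq \{e_n\} \ast A$. Since adjoining $e_n$ to any partial basis of $\ker L$ yields a partial basis of $\O^n$, $D$ really is a subcomplex of $X$; one checks directly that it is a full subcomplex, and it is contractible because it is a cone. I then apply Lemma~\ref{lemma:pushing} with $X = \PB_n(0)$, with its ``$A$'' taken to be $D$, and with $m = n-2$: because $D$ is contractible, once we know that the pair $(X,D)$ is $(n-2)$-connected it follows immediately from the long exact sequence of the pair that $X$ itself is $(n-2)$-connected, which is what we want. So the task reduces to verifying the hypothesis of Lemma~\ref{lemma:pushing}: for every $k$-simplex $\sigma^k$ of $X$ disjoint from $D$, the complex $\Link_X(\sigma^k)\cap D$ is $(n-k-3)$-connected.

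A $k$-simplex $\sigma^k = \{v_0,\dots,v_k\}$ disjoint from $D$ has $L(v_i)=1$ for all $i$ (and no $v_i$ is $e_n$), so $V \coloneq V_\sigma$ is free of rank $k+1$ with $L(V)=\O$. Since $\ker L$ has corank $1$ in $\O^n$ and $V \not\subseteq \ker L$, we have $V + \ker L = \O^n$, so the quotient map $q \colon \O^n \onto \O^n/V$ (with $\O^n/V$ free of rank $n-k-1$) restricts to a surjection $\ker L \onto \O^n/V$. The heart of the matter is the claim that $L_A \coloneq \Link_X(\sigma^k)\cap A$ is \emph{fully fibered} (Definition~\ref{def:fibersCMsplit}) over $\PB(\O^n/V) \iso \PB_{n-k-1}(\O)$ by $q$: indeed, a set $\tau$ of unimodular vectors in $\ker L$ forms a simplex of $L_A$ iff $\sigma \cup \tau$ is a partial basis of $\O^n$ iff $q(\tau)$ is a partial basis of $\O^n/V$, and surjectivity of $q|_{\ker L}$ supplies the remaining condition in Definition~\ref{def:fibersCMsplit}. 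By the bootstrapping step $\PB_{n-k-1}(\O)$ is CM of dimension $n-k-2$ (for $k = n-1$ the link is empty and there is nothing to prove), so Lemma~\ref{lemma:fibersCMsplit} shows $L_A$ is CM of dimension $n-k-2$, hence $(n-k-3)$-connected. If $\sigma^k \cup \{e_n\} \notin X$ then $\Link_X(\sigma^k)\cap D = L_A$ and we are done; otherwise (which forces $k \le n-3$) $\Link_X(\sigma^k)\cap D = L_A \cup (\{e_n\}\ast L_A^e)$ glued along $L_A^e \coloneq \Link_X(\sigma^k\cup\{e_n\})\cap A$, which is the mapping cone of the inclusion $L_A^e \into L_A$ and so is homotopy equivalent to $L_A/L_A^e$. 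Running the same fibering argument for the $(k+1)$-simplex $\sigma^k\cup\{e_n\}$, which spans a rank-$(k+2)$ summand on which $L$ is onto, shows $L_A^e$ is CM of dimension $n-k-3$, hence $(n-k-4)$-connected; the long exact sequence of the cofibration $L_A^e \into L_A$ together with a van~Kampen argument (valid since $L_A^e$ is connected whenever the simple-connectivity of $L_A/L_A^e$ is at issue) then gives that $L_A/L_A^e$ is $(n-k-3)$-connected. The cases $n \le 2$, and the cases $k \in \{n-2,n-1\}$ (where $(n-k-3)$-connectedness means only nonemptiness, or nothing), are immediate.

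I expect the main obstacle to be the bootstrapping step together with the correct identification of the fibered structure of $\Link_X(\sigma^k)\cap A$ and of the ``$e_n$-tail'' $L_A^e$. In particular, this is the step that forces us to work with the auxiliary complexes $\PB_m(\O)$ for all $m < n$ rather than just $m = n-1$, and it is where we need their full Cohen--Macaulayness --- not merely high connectivity --- in order to invoke Lemma~\ref{lemma:fibersCMsplit}.
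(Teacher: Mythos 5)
Your argument is correct, but it takes a genuinely different route from the paper's, and in two places it uses substantially heavier machinery than is needed.

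The overall shape is the same: identify $A \coloneq Z_n$ with $\PB_{n-1}(\O)$, and apply Lemma~\ref{lemma:pushing}. The first real divergence is your choice of target subcomplex. You push into the contractible cone $D=\{e_n\}\ast A$, so that $(n-2)$-connectivity of the pair $(X,D)$ immediately yields $(n-2)$-connectivity of $X=\PB_n(0)$. The paper instead pushes into $A=Z_n$ itself and then adds a short extra step at the end: since every simplex of $Z_n$ is in the star of any fixed vertex $v$ with $L(v)=1$, the image of $Z_n$ in $\PB_n(0)$ is null-homotopic, so $(n-2)$-connectivity of $(\PB_n(0),Z_n)$ still gives the result. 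Your choice of $D$ trades that final contraction step for a more complicated link computation: for $\sigma$ disjoint from $D$, you have to analyze $\Link_X(\sigma)\cap D$ as a mapping cone $L_A\cup_{L_A^e}(\{e_n\}\ast L_A^e)$, and then run a long-exact-sequence plus van~Kampen plus Hurewicz argument on $L_A/L_A^e$. (Minor slip here: $\sigma^k\cup\{e_n\}\in X$ forces $k\le n-2$, not $k\le n-3$; you do handle $k\in\{n-2,n-1\}$ separately at the end, so nothing breaks.)

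The second, larger divergence is how you prove the link $L_A=\Link_X(\sigma^k)\cap A$ is $(n-k-3)$-connected. You show it is \emph{fully fibered} over $\PB_{n-k-1}(\O)$ via the quotient $\O^n\onto\O^n/V_\sigma$, invoke Lemma~\ref{lemma:fibersCMsplit}, and hence need the CM-ness of $\PB_m(\O)$ for all $m<n-1$, not just $m=n-1$; this forces your bootstrapping step (realizing $\PB_m(\O)$ as a full simplicial retract of $\Link_{\PB_{n-1}(\O)}(\rho)$ and transferring CM-ness along the retraction). All of this is correct as you set it up — in particular the retraction $q$ induced by a coordinate projection really does restrict to retractions on links, which is the point your ``routine fact'' needs — but it is much more than required. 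The paper instead observes that if $\sigma^k=\{v_0,\ldots,v_k\}$ with all $L(v_i)=1$, then $\sigma$ can be replaced by $\tau=\{v_0\}\cup\eta$ with $\eta=\{v_1-v_0,\ldots,v_k-v_0\}\subset A$ and $V_\sigma=V_\tau$; since $\O^n=\langle v_0\rangle\oplus\ker L$, one gets the exact identification $\Link_{\PB_n(0)}(\sigma^k)\cap Z_n=\Link_{Z_n}(\eta^{k-1})$. This is a link of a $(k-1)$-simplex in the CM complex $Z_n\iso\PB_{n-1}(\O)$, so it is CM of dimension $n-k-2$ and hence $(n-k-3)$-connected with no appeal to Lemma~\ref{lemma:fibersCMsplit}, no bootstrap, and no mapping-cone analysis. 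Incidentally, the same shift trick would also shorten your proof: it computes $L_A=\Link_A(\eta)$ and $L_A^e=\Link_A(\eta')$ with $\eta'=\{v_0-e_n,\ldots,v_k-e_n\}$ directly from CM-ness of $A$ alone, eliminating the bootstrap and the appeal to Lemma~\ref{lemma:fibersCMsplit} while keeping your contractible $D$.
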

\begin{proof}
Define $Z_n$ to be the full subcomplex of $\PartialBases_n(0)$ spanned by those vertices $v$ of $\PartialBases_n(0)$ with $L(v)=0$. The complex $Z_n$ consists of partial bases contained in the summand $\ker L\iso \O^{n-1}$.  In fact, by Lemma~\ref{lemma:partialbasissub}(b) these are precisely the partial bases of the summand $\ker L$, so we have an isomorphism $Z_n\iso \PB_{n-1}(\O)$. Therefore our assumption guarantees that $Z_n$ is CM of dimension $n-2$.

Consider a $k$-simplex $\sigma^k=\{v_0,v_1,\ldots,v_k\}\in \PartialBases_n(0)$ disjoint from $Z_n$, i.e.\ with $L(v_0)=\cdots=L(v_k)=1$. Let $\eta^{k-1}=\{v_1-v_0,\ldots,v_k-v_0\}$ and $\tau^k = \{v_0\}\cup \eta\in \PartialBases_n(0)$. Since $V_\sigma=V_\tau$, the links $\Link_{\PartialBases_n(0)}(\sigma^k)$ and $\Link_{\PartialBases_n(0)}(\tau^k)$ coincide.  Furthermore, given $\nu^{n-k-2}\in Z_n$ the set $\{v_0\}\cup \eta\cup\nu$ is a basis for $\O^n$ if and only if $\eta\cup\nu$ is a basis for $\ker L$. In other words,
\[\Link_{\PartialBases_n(0)}(\tau^k) \cap Z_n=\Link_{Z_n}(\eta^{k-1}).\]
 
Since $Z_n$ is CM of dimension $n-2$, we know that $\Link_{Z_n}(\eta^{k-1})$ is CM of dimension $(n-2)-(k-1)-1=n-k-2$, and is thus $(n-k-3)$-connected.
We now apply Lemma~\ref{lemma:pushing} with $X=\PartialBases_n(0)$ and $A=Z_n$ and $m=n-2$; the conclusion is
that $(\PB_n(0),Z_n)$ is $(n-2)$-connected.

Fix a vector $v\in \O^n$ with $L(v)=1$.  Since $\{u_2,\ldots,u_n\}\subset \ker L$ is a basis for $\ker L$ if and only if $\{v,u_2,\ldots,u_n\}$ is a basis for $\O^n$, every simplex of $Z_n$ is contained in $\Link_{\PB_n(0)}(v)$.
We conclude that the image of $Z_n\into \PB_n(0)$ 
can be contracted to $v$.  Together with the fact that $(\PB_n(0),Z_n)$ is $(n-2)$-connected, this
implies that $\PB_n(0)$ is $(n-2)$-connected, as desired.
\end{proof}

\subsection{Assembling the proof: \texorpdfstring{$\PB_n(I)$}{Bn(I)} is CM of dimension \texorpdfstring{$n-1$}{n-1}}
\label{section:BnICM}

We are finally ready to combine the steps of the previous sections and prove Theorem~\ref{thm:BnICM}. 
\begin{proof}[Proof of Theorem~\ref{thm:BnICM}]
We prove by induction on $n$ that $\PB_n(I)$ is CM of dimension $n-1$ for all ideals $I\subset \O$. The base cases are $n=1$ and $n=2$. Proposition~\ref{prop:basecasen1}, which holds for any Dedekind domain $\O$, states that $\PB_1(I)$ is CM of dimension 0.  Proposition~\ref{prop:basecasen2} states that under precisely our hypotheses, $\PB_2(I)$ is CM of dimension~1.

Now fix $n\geq 3$ and an ideal $I\subset \O$, and assume that $\PB_m(J)$ is CM of dimension $m-1$ for all $m<n$ and all $J\subset \O$.
Under these assumptions, Proposition~\ref{prop:BBprime} states that $(\PB_n(I),\PBg_n(I))$ is $(n-2)$-connected; Proposition~\ref{prop:BprimeB0} states that $(\PBg_n(I),\PB_n(0))$ is $(n-2)$-connected; and Proposition~\ref{prop:B0connected} states that $\PB_n(0)$ itself is $(n-2)$-connected. Together these imply that the $(n-1)$-dimensional complex $\PB_n(I)$ is $(n-2)$-connected, and thus $(n-1)$-spherical.

It remains to prove that for every $k$-simplex $\sigma^k\in\PartialBases_n(I)$, the space 
$\Link_{\PB_n(I)}(\sigma^k)$ is CM of dimension $n-k-2$. If $k=n-1$ this is vacuous.
If $k=n-2$, we must prove that $\Link_{\PB_n(I)}(\sigma^k)$ is nonempty. But the partial $I$-basis $\sigma^{n-2}$ is by definition contained in an $I$-basis $\sigma^{n-2}\cup\{v\}$, so $\Link_{\PB_n(I)}(\sigma^k)$ contains the $0$-simplex $\{v\}$.

Now assume that $k<n-2$. By Lemma~\ref{prop:charBprime}(1), $\sigma$ has a good complement $W$. Let $\pi\colon \O^n\onto W$ be the projection with kernel $V_\sigma$. We will prove that $\Link_{\PB_n(I)}(\sigma)$ is fully fibered over $\PB_W(I+I_\sigma)$ by $\pi$ using Lemma~\ref{lemma:fibersonlyd}.

By definition, every simplex of $\Link_{\PB_n(I)}(\sigma)$ or $\PB_W(I+I_\sigma)$ is contained in an $(n-k-2)$-simplex. Given a subset $U=\{u_{k+2},\ldots,u_n\}$ with $L(u_i)\equiv 0\text{ or }1\bmod{I}$ for each $i=k+2,\ldots,n$, the set $\sigma\cup U$ is a basis of $\O^n$ if and only if $\pi(U)$ is a basis of $W$. Moreover $L(\pi(u))\equiv L(u)\bmod{I_\sigma}$, so in this case $\pi(U)$ is an $(I+I_\sigma)$-basis of $W$. Therefore such a subset $U$ is an $(n-k-2)$-simplex of $\Link_{\PB_n(I)}(\sigma)$ if and only if $\pi(U)$ is an $(n-k-2)$-simplex of $\PB_W(I+I_\sigma)$, verifying condition (i) of Lemma~\ref{lemma:fibersonlyd}.

It remains to check condition (ii) of Lemma~\ref{lemma:fibersonlyd}, so consider an $(I+I_\sigma)$-basis $\tau=\{w_{k+2},\ldots,w_n\}$ of $W$. For each $i$ we have $L(w_i)\equiv 0\text{ or }1\bmod{I+I_\sigma}$, so there exists $v_i\in V_\sigma$ such that $L(w_i+v_i)\equiv 0\text{ or }1\bmod{I}$. The set $U\coloneq \{w_{k+2}+v_{k+2},\ldots,w_n+v_n\}$ projects under $\pi$ to $\tau$, so it is an $(n-k-2)$-simplex of $\Link_{\PB_n(I)}(\sigma)$. 

 Lemma~\ref{lemma:fibersonlyd} now implies that  $\Link_{\PB_n(I)}(\sigma)$ is fully fibered over $\PB_W(I+I_\sigma)$ by $\pi$. Since $W\iso \O^{n-k-1}$, the complex $\PB_W(I+I_\sigma)$ is isomorphic to $\PB_{n-k-1}(I+I_\sigma)$, which we have assumed by induction  is CM of dimension $n-k-2$. Applying Lemma~\ref{lemma:fibersCMsplit} shows that $\Link_{\PB_n(I)}(\sigma^k)$ is CM of dimension $n-k-2$, as desired.

We have proved that $\PB_n(I)$ is $(n-1)$-spherical and that $\Link_{\PB_n(I)}(\sigma^k)$ is CM of dimension $n-k-2$ for every $\sigma^k\in \PB_n(I)$, so $\PB_n(I)$ is CM of dimension $n-1$. Since the ideal $I$ was arbitrary, this concludes the inductive step, and thus concludes the proof of Theorem~\ref{thm:BnICM}.
\end{proof}

\section{Integral apartments}
\label{section:integralapartments}

In this section we use Theorem~\ref{maintheorem:partialbasescm} to prove Theorem~\ref{maintheorem:integralapartments}.

\begin{proof}[{Proof of Theorem~\ref{maintheorem:integralapartments}}]
Let $C_\ast$ be the reduced chain complex of the simplicial complex $\PB_n(\O)$, so $C_{-1}\iso \Z$. 
We define the \emph{integral apartment class map} 
\[\phi\colon C_{n-1}\to \St_n(K)\] 
as follows. An ordered  basis $\vv=(v_1,\ldots,v_n)$ of $\O^n$ determines an $(n-1)$-simplex of $\PB_n(\O)$, and thus a generator $[\vv]=[v_1,\ldots,v_n]$ of $C_{n-1}$.  Reordering the basis simply changes the orientation of this simplex, so we have $[v_{\tau(1)},\ldots,v_{\tau(n)}]=(-1)^\tau[v_1,\ldots,v_n]$ for $\tau\in S_n$; there are no other relations between the elements $[\vv]\in C_{n-1}$.
Given $\vv$, define lines $L_1,\ldots,L_n$ in $K^n$ via the formula $L_i \coloneq \spn_K(v_i)$.  Since $\{v_1,\ldots,v_n\}$ is a basis of $\O^n$, Lemma~\ref{lemma:summandsubspace}(c) implies  that $L_i\cap \O^n=\spn_\O(v_i)$ and 
 $\O^n=\spn_\O(v_1)\oplus\cdots\oplus \spn_\O(v_n)$, so the frame $\LL_{\vv}\coloneq \{L_1,\ldots,L_n\}$ is integral.   

Let $A_{\vv}\coloneq A_{\LL_{\vv}}$ be the integral apartment determined by this integral frame, and define $\phi([\vv])=[A_{\vv}]\in \St_n(K)$. Reordering the basis vectors gives the same frame, and thus the same apartment, but possibly with reversed orientation: $[A_{\tau(\vv)}]=(-1)^\tau [A_{\vv}]$. This shows that the map $\phi$ is well-defined. Our goal is to prove that $\phi$ is surjective, i.e.\ that $\St_n(K)$ is generated by integral apartment classes. We will do this by factoring the map $\phi$ as follows.

For a simplicial complex $X$, the isomorphism $b\colon \HH_{\ast}(X;\Z) \stackrel{\iso}{\rightarrow} \HH_{\ast}(\Poset(X);\Z)$ determined by the homeomorphism between $X$ and its barycentric subdivision is induced by the chain map \[[\{x_1,\ldots,x_k\}]\mapsto\sum_{\tau\in S_k}(-1)^\tau \big[\{x_{\tau(1)}\}\subset \{x_{\tau(1)},x_{\tau(2)}\}\subset \cdots\subset \{x_{\tau(1)},\ldots,x_{\tau(k)}\}\big].\]
Let $\PB_n(\O)^{(n-2)}$ be the $(n-2)$-skeleton of $\PB_n(\O)$ and let $\PP\coloneq \Poset(\PB_n(\O)^{(n-2)})$ be the poset of simplices of $\PB_n(\O)^{(n-2)}$. 
In the introduction we considered $\Tits_n(K)$ to be a simplicial complex, but let us define $\Tits_n(K)$ to be the poset of proper nonzero $K$-subspaces of $K^n$; the simplicial complex of flags from the introduction is then its geometric realization. Define the poset map $F\colon \PP\to \Tits_n(K)$ by \[F(\{v_1,\ldots,v_r\})=\spn_K(v_1,\ldots,v_r).\]
Since $\spn_K(v_1,\ldots,v_r)$ is an $r$-dimensional subspace of $K^n$, the restriction to simplices of $\PB_n(\O)^{(n-2)}$ (i.e.\ to simplices satisfying $0<r<n$) guarantees that this is a proper nonzero subspace, so $F$ is well-defined and also height-preserving.

Our claim is that the integral apartment class map $\phi\colon C_{n-1}\to \St_n(K)$ factors as the composition
\begin{equation}
\label{eq:intapartmentcomposition}
\begin{aligned}
\phi\colon C_{n-1}\xrightarrow{\partial} \ker(\partial\colon C_{n-2}&\to C_{n-3})= \widetilde{\HH}_{n-2}(\PB_n(\O)^{(n-2)};\Z)\\
&\xrightarrow[\iso]{b} \widetilde{\HH}_{n-2}(\PP;\Z)\xrightarrow{F_{\ast}} \widetilde{\HH}_{n-2}(\Tits_n(K);\Z)=\St_n(K).
\end{aligned}
\end{equation}
Indeed, consider an ordered basis $\vv=(v_1,\ldots,v_n)$ for $\O^n$. Under the boundary map 
$\partial\colon C_{n-1}\to C_{n-2}$, we have $\partial[\vv]=\sum_{i=1}^n (-1)^i [\vv^{(i)}]$, where $\vv^{(i)}=\{v_1,\ldots,\widehat{v}_i,\ldots,v_n\}$. Under $b$, this is taken to \[b(\partial [v])=\sum_{\sigma\in S_n}(-1)^\sigma \big[\{x_{\sigma(1)}\}\subset \cdots\subset \{x_{\sigma(1)},\ldots,x_{\sigma(n-1)}\}\big].\] 
Under $F_{\ast}$, the term corresponding to $\sigma\in S_n$ is taken to $\big[L_{\sigma(1)}\subset L_{\sigma(1)}+L_{\sigma(2)}\subset \cdots\subset L_{\sigma(1)}+\cdots + L_{\sigma(n-1)}\big]$. These are precisely the $(n-2)$-simplices making up the apartment $A_{\vv}$ (compare with \S\ref{section:buildings}  and Example~\ref{ex:Titsbuilding} below), and so $F_{\ast}(b(\partial[v]))=[A_{\vv}]$, as claimed.

To prove that $\phi$ is surjective, we prove that the maps $\partial$ and $F_{\ast}$ occurring in~\eqref{eq:intapartmentcomposition} are surjective. We first establish that under our assumptions, $\PB_n(\O)$ is CM of dimension $n-1$. In the case when $\abs{S}>1$ and $S$ contains a non-complex place, this is the statement of Theorem~\ref{maintheorem:partialbasescm}. In the remaining case when $\O$ is Euclidean, this was proved by Maazen~\cite{MaazenThesis} (see
\cite[Appendix]{LooijengaVanDerKallen} and \cite[Proof of Theorem B]{DayPutmanComplex}
for published proofs). 
By definition, the cokernel of $\partial\colon C_{n-1}\to \ker(\partial\colon C_{n-2}\to C_{n-3})$ is $\widetilde{\HH}_{n-2}(\PB_n(\O);\Z)$. We have just established that $\PB_n(\O)$ is CM of dimension $n-1$, and thus certainly $(n-2)$-acyclic. Therefore $\widetilde{\HH}_{n-2}(\PB_n(\O);\Z)=0$ and $\partial\colon C_{n-1}\to \ker(\partial\colon C_{n-2}\to C_{n-3})$ is surjective.

To show that $F_{\ast}$ is surjective, we will apply Theorem~\ref{theorem:quillen} to the height-preserving poset map $F$, so we must
verify that its hypotheses are satisfied. The
Solomon--Tits theorem says that $\Tits_n(K)$ is CM of dimension
$(n-2)$ (see e.g.\ \cite[Example~8.2]{QuillenPoset}). Given  $V \in \Tits_n(K)$, let $\ell = \dim(V)$, so $\height(V) = \ell-1$.  Lemma~\ref{lemma:summandsubspace}(c) 
implies that $V \cap \O^n$ is a rank $\ell$ direct summand of $\O^n$. Since $\abs{\class(\O)} = 1$, every f.g.\ projective $\O$-module is free, so $V \cap \O^n$ is isomorphic to $\O^\ell$. Lemma~\ref{lemma:partialbasissub}(b) states that $\{v_1,\ldots,v_k\}\in F_{\leq V}$ exactly if $\{v_1,\ldots,v_k\}$ is a partial basis of $V\cap \O^n$, so we have an isomorphism $F_{\leq V} \iso \Poset(\PartialBases_{\ell}(\O))$. We established in the previous paragraph that $\Poset(\PartialBases_{\ell}(\O))$ is CM of
dimension $\ell-1 = \height(V)$, so this verifies the remaining hypothesis of Theorem~\ref{theorem:quillen}. Applying Theorem~\ref{theorem:quillen}, we deduce that the map
\[F_{\ast}\colon \widetilde{\HH}_{n-2}(\PP;\Z) \rightarrow \widetilde{\HH}_{n-2}(\Tits_n(K);\Z) = \St_n(K)\]
is surjective. Since all other maps in~\eqref{eq:intapartmentcomposition} are isomorphisms, this shows that the integral apartment class map $\phi\colon C_{n-1}\to \St_n(K)$ is surjective.
\end{proof}

\section{Vanishing Theorem}
\label{section:vanishing}

In this section we deduce Theorem~\ref{maintheorem:vanishing}  from  Theorem~\ref{maintheorem:integralapartments}.  
\begin{proof}[Proof of Theorem~\ref{maintheorem:vanishing}]
In the special case $\lambda=0$, Theorem~\ref{maintheorem:vanishing}  states that 
$\HH^{\SLvcd}(\SL_n \O_K;K) = 0$ for all $n\geq 2$, and similarly for $\GL_n\O_K$. Since $\HH^{\ast}(\SL_n \O_K;K)\iso\HH^{\ast}(\SL_n \O_K;\Q)\tensor_\Q K$, the last claim of the theorem follows from the first ones, so those are all we need to prove.

Fix $\lambda = (\lambda_1,\ldots,\lambda_n)\in \Z^n$ with $\lambda_1\geq \cdots\geq \lambda_n$.
As we explained in \S\ref{section:topcohomologyintro}, Borel--Serre duality implies that $\St_n(K)$ is the rational dualizing module for $\SL_n\O_K$, which implies that
\[\HH^{\SLvcd}(\SL_n \O_K;V_{\lambda}) = (\St_n(K) \otimes V_{\lambda})_{\SL_n \O_K}.\]
Moreover, $\GL_n \O_K$ is an extension of $\SL_n \O_K$ by $\O_K^{\times}$.  By Dirichlet's unit theorem, $\O_K^\times\iso \Z^{r_1+r_2-1}\times \mu_K$, which is a virtual Poincar\'{e} duality group 
with virtual cohomological dimension $r_1+r_2-1$. It thus follows from Bieri--Eckmann~\cite[Theorem~3.5]{BieriEckmannDuality}  that $\GL_n \O_K$ is also a rational Bieri--Eckmann duality group with $\vcd(\GL_n \O_K)=\vcd(\SL_n \O_K)+\vcd(\O_K^\times)$ (as can be seen from the formulas in \eqref{eq:GLSLvcd}), and that the
rational dualizing module $D$ of $\GL_n\O_K$ satisfies
\begin{equation}
\label{eqn:glrestrict}
\text{Res}^{\GL_n \O_K}_{\SL_n \O_K} D \cong \St_n(K).
\end{equation}
We remark that it is not always the case that $D \cong \St_n(K)$ -- the actions of $\GL_n \O_K$ on $\St_n(K)$ and
$D$ sometimes differ by twisting by a certain character; see \cite{PutmanStudenmund} for a complete description.
In any case, \eqref{eqn:glrestrict} implies that
\[\HH^{\GLvcd}(\GL_n \O_K;V_{\lambda}) = (D \otimes V_{\lambda})_{\GL_n \O_K}\]
is a quotient of $(D \otimes V_{\lambda})_{\SL_n \O_K}\cong (\St_n(K) \otimes V_{\lambda})_{\SL_n \O_K}$.
The upshot is that to prove Theorem~\ref{maintheorem:vanishing}, it is enough to prove
that $(\St_n(K) \otimes V_{\lambda})_{\SL_n \O_K} = 0$ when $n\geq 2+\|\lambda\|$.

Define $\lambda' = (\lambda'_1,\ldots,\lambda'_{n-1},0)$ by $\lambda'_i = \lambda_i - \lambda_n$; observe that
$\|\lambda'\|=\|\lambda\|$.
As representations of $\SL_n \O_K$, the representations $V_{\lambda}$ and
$V_{\lambda'}$ are isomorphic; they only differ as representations of $\GL_n \O_K$.
Using Schur--Weyl duality, we can embed the $\SL_n K$-representation
$V_{\lambda'}$ as a direct summand of $(K^n)^{\otimes \ell}$ with $\ell \coloneq \|\lambda\|$.
It follows that it is enough to prove that
$(\St_n(K) \otimes (K^n)^{\otimes \ell})_{\SL_n \O_K} = 0$ when $n \geq 2+\ell$.

Fix an integral frame $\LL = \{L_1,\ldots,L_n\}$ with integral apartment class $[A_\LL]\in \St_n(K)$. We next show that $[A_{\LL}] \otimes (K^n)^{\otimes \ell}$ vanishes in the quotient $(\St_n(K) \otimes (K^n)^{\otimes \ell})_{\SL_n \O_K}$ if $n\geq 2+\ell$.
For each
$1 \leq i \leq n$, the intersection $L_i\cap \O_K^n$ is a rank~1 projective $\O_K$-module. Since $\abs{\cl(\O_K)}=1$ this module is free, so we can choose a generator $v_i$ for each $L_i\cap \O_K^n$. The assumption that $\LL$ is an integral frame means precisely that $\{v_1,\ldots,v_n\}$ is an $\O_K$-basis for $\O_K^n$. In particular $\{v_1,\ldots,v_n\}$ is a $K$-basis for $K^n$, so
$S = \Set{$v_{i_1} \otimes \cdots \otimes v_{i_{\ell}}$}{$1 \leq i_j \leq n$}$
is a basis for $(K^n)^{\otimes \ell}$.  

Consider an arbitrary basis vector $s = v_{i_1} \otimes \cdots \otimes v_{i_{\ell}} \in S$. Under the assumption $n\geq 2+\ell$,  we can find $1 \leq j < j' \leq n$ which
are distinct from all of the $i_1,\ldots,i_{\ell}$.  There exists a unique $g \in \SL_n \O_K$ such that 
\[g(v_i) = \begin{cases}
v_{j'} & \text{if $i = j$},\\
-v_{j} & \text{if $i = j'$},\\
v_i    & \text{otherwise}.\end{cases}\]
We have $g(s) = s$ and $g(\LL)=\LL$ by construction.
However $g$ exchanges the two lines $L_j$ and $L_{j'}$, so it reverses the orientation of the apartment $A_{\LL}$, i.e.\ $g([A_{\LL}]) = -[A_{\LL}]$. This means that $[A_{\LL}] \otimes s$ and $-[A_{\LL}] \otimes s$ become equal, and thus vanish, in the coinvariant quotient $(\St_n(K) \otimes (K^n)^{\otimes \ell})_{\SL_n \O_K}$. Since $s$ was arbitrary, $[A_{\LL}] \otimes (K^n)^{\otimes \ell}$ vanishes in this quotient for every integral apartment $A_{\LL}$. Theorem~\ref{maintheorem:integralapartments} states that $\St_n(K)$ is generated
by integral apartment classes, and so we conclude that $(\St_n(K) \otimes (K^n)^{\otimes \ell})_{\SL_n \O_K}=0$, as desired.
\end{proof}

\begin{remark}
\label{rem:thmCfalseimaginary}
Some condition on $K$ beyond $\abs{\class(\O_K)} = 1$ is necessary in Theorem
\ref{maintheorem:vanishing}.  Indeed, for $d<0$ squarefree let
$\O_d$ denote the ring of integers in the quadratic imaginary field $K_d=\Q(\sqrt{d})$.  Those $d < 0$ for which
$\O_d$ is non-Euclidean but satisfies $\abs{\class(\O_d)}=1$ are exactly
$d\in \{-19, -43, -67, -163\}$.  Although $\HH^2(\SL_2 \O_{-19};\Q) = 0$, we have
\[\HH^2(\SL_2 (\O_{-43});\Q) = \Q,\quad \HH^2(\SL_2 (\O_{-67});\Q) = \Q^2,\quad \HH^2(\SL_2 (\O_{-163});\Q) = \Q^6.\]
For these calculations see Vogtmann~\cite{VogtmannBianchi} and Rahm~\cite{RahmTorsion}.  Presumably,
similar things occur for $\SL_n \O_{d}$ for $n \geq 3$, but we could not find such calculations 
in the literature.  It is likely that these four fields $\Q(\sqrt{-19})$, $\Q(\sqrt{-43})$, $\Q(\sqrt{-67})$, and $\Q(\sqrt{-163})$ provide the
only such counterexamples to the conclusion of Theorem~\ref{maintheorem:vanishing}; indeed, 
Weinberger~\cite{WeinbergerEuclidean}
proved that the generalized Riemann hypothesis implies that if $\O_K$ has class number $1$
and infinitely many units,
then $\O_K$ is Euclidean.
\end{remark}

\section{Non-Vanishing and Non-Integrality Theorems}
\label{section:prelimnonvanishing}

In this section we prove Theorem~\ref{maintheorem:nonintegrality} and Theorem~\ref{maintheorem:nonvanishing}; in fact we prove stronger versions of these results in Theorems~\ref{thm:nonintegralitystronger} and~\ref{thm:nonvanishingstronger}, which apply to projective modules $M$ that need not be free.  After summarizing in \S\ref{section:buildings} some basic properties of spherical buildings, in \S\ref{section:structureTitsM} we prove Proposition~\ref{prop:foldedapartment}, the key technical theorem 
of this section.  We apply this proposition in \S\ref{section:nonvanishingnonintegrality} to prove Theorems~\ref{thm:nonintegralitystronger} and~\ref{thm:nonvanishingstronger}.

\subsection{Spherical buildings}
\label{section:buildings}

In this section we briefly review the classical theory of spherical buildings.  See  \cite{BrownBuildings} for a general reference.

A $d$-dimensional spherical building $Y$ is a simplicial complex satisfying certain axioms.  
Associated to $Y$ is a finite Coxeter group $W$ of rank $d+1$ called the
\emph{Weyl group} and a canonical collection 
of subcomplexes $A\subset Y$ called \emph{apartments}.  Each 
apartment is isomorphic to the 
Coxeter complex of $W$ and is thus acted upon by $W$.  
In particular, each apartment $A$ is homeomorphic to a  sphere 
$S^d$ and defines a fundamental class $[A]\in \widetilde{\HH}_d(Y)$. 
The class $[A]$ is only defined up to $\pm1$, but for our purposes this ambiguity will never pose a problem, so we do not mention it again.  
A $d$-simplex $C$ in an apartment $A$ is called a \emph{chamber} of $A$.   The apartment $A$ is 
the union of the translates $wC$ for $w\in W$, and 
$[A]= \sum_{w\in W}(-1)^w [wC]$. 

Any $d$-dimensional spherical building $Y$ is CM of dimension $d$. 
The top homology $\widetilde{\HH}_d(Y)$ is 
spanned by the fundamental classes $[A]$ of apartments; indeed, for any fixed chamber $C$, a basis for $\widetilde{\HH}_d(Y)$ is given by the apartment classes $[A]$ of all apartments $A$ containing $C$.

The following three examples will be of the most interest to us. A poset $P$ is said to be a $d$-dimensional spherical building if its realization $\abs{P}$ is.

\begin{example}[{\bf Poset of subspaces}]
\label{ex:Titsbuilding}
Given an $n$-dimensional vector space $W$ over a field $K$, let $\Tits(W)$ be the poset of proper, nonzero $K$-subspaces of $W$. Its realization is the spherical Tits building associated to $\GL(W)$, an $(n-2)$-dimensional spherical building with Weyl group $S_n$. 
The chambers of $\Tits(W)$ are the complete flags
\[0 \subsetneq W_1 \subsetneq \cdots \subsetneq W_{n-1} \subsetneq W.\]
The Coxeter complex of $S_n$ can be identified with 
the barycentric subdivision of $\partial \Delta^{n-1}$.  As 
discussed in the introduction, apartments of $\Tits(W)$ are in bijection 
with \emph{frames} of $W$, i.e.\ sets of lines $\LL=\{L_1,\ldots,L_n\}$ 
such that $W=L_1\oplus \cdots\oplus L_n$.  The apartment $A_{\LL}$ corresponding
to $\LL$ satisfies $[A_{\LL}] = \sum_{\sigma \in S_n} (-1)^{\sigma} \VV_{\sigma}$,
where $\VV_{\sigma}$ is the chamber
\begin{equation}
\label{eq:Titschambers}
L_{\sigma(1)} \subsetneq  L_{\sigma(1)}+L_{\sigma(2)} \subsetneq
\cdots \subsetneq L_{\sigma(1)}+\cdots+L_{\sigma(n-1)}.
\end{equation}
\end{example}

\begin{example}[{\bf Poset of summands}]
\label{ex:Titssummands}
Given a finite rank projective $\O$-module $M$ over a Dedekind domain $\O$, let $\Tits(M)$ denote the poset of proper nonzero $\O$-submodules $U\subset M$ which are direct summands of $M$. 
Recall from Lemma~\ref{lemma:summandsubspace}(c) that the assignment $V\mapsto V\cap M$ gives a bijection between the $K$-subspaces $V \subsetneq M\tensor_\O K$ and the direct summands of the $\O$-module $M$. This assignment preserves containment, so it gives a canonical isomorphism $\Tits(M\tensor_\O K)\iso \Tits(M)$. In particular, if $M$ has rank $n$, then $\Tits(M)$ is an $(n-2)$-dimensional spherical building.
The apartments of $\Tits(M)$ correspond to frames $\LL=\{L_1,\ldots,L_n\}$ of $M\tensor_\O K$, or equivalently to sets $\II=\{I_1,\ldots,I_n\}$ of rank $1$ summands of $M$ such that $I_1+\cdots+I_n$ has rank $n$.  The formula~\eqref{eq:Titschambers} might suggest that the chambers of $A_{\II}$ are given by e.g.\ $I_1\subsetneq I_1+I_2\subsetneq \cdots$, but this is  not the case: indeed $I_1+I_2$ need not be a summand of $M$ at all! The corresponding summand is  $M\cap ((I_1+I_2)\otimes_\O K)$, the ``saturation'' of $I_1+I_2$. For $n\in \N$, let $[n]= \{1,\ldots,n\}$, and given $X\subset[n]$ set \begin{equation}
\label{eq:UX}
U_X \coloneq  M\cap (\sum_{x\in X} I_x\otimes_\O K).
\end{equation}
 The chambers of $A_{\II}$ are then
\[\VV_\sigma\colon\quad U_{\sigma([1])}\subsetneq U_{\sigma([2])}\subsetneq \cdots\subsetneq U_{\sigma([n-1])}\qquad\quad\text{for } \sigma\in S_n,\]
and the apartment class
$[A_{\II}]\in \widetilde{\HH}_{n-2}(\Tits(M);\Z)$ is given by $[A_{\II}]=\sum_{\sigma\in S_n}(-1)^\sigma \VV_\sigma$.
A frame $\II=\{I_1,\ldots,I_n\}$ is \emph{$M$-integral} if $I_1+\cdots+I_n=M$. In this case we have $M=I_1\oplus \cdots\oplus I_n$ and $U_X=\bigoplus_{x\in X}I_X$, and the chambers of the \emph{$M$-integral apartment} $A_{\II}$ are indeed given by 
\[I_{\sigma(1)} \subsetneq  I_{\sigma(1)} \oplus I_{\sigma(2)} \subsetneq
\cdots \subsetneq I_{\sigma(1)}\oplus\cdots\oplus I_{\sigma(n-1)}.
\]
\end{example}

\begin{example}[{\bf Join of discrete posets}]
\label{ex:joindiscrete}
Let $T$ be a finite set.  Define a poset $X_m(T) = \{1,\ldots,m\} \times T$ by declaring $(p,t) < (p',t')$ exactly when
$p < p'$.  The poset $X_m(T)$ is an $(m-1)$-dimensional spherical building with Coxeter group
$W = (\Z/2)^{m}$.  The chambers of $X_m(T)$ are chains
\[(1,t_1) < (2,t_2) < \ldots < (m,t_{m})\]
with $t_i \in T$ for $1 \leq i \leq m$.  We will denote this chamber by
$(t_1,\ldots,t_m)$.
The apartments of $X_m(T)$ are in bijection with ordered sequences
$\bS = (S_1,\ldots,S_{m})$, where $S_i = \{a_i,b_i\}$ is a 
$2$-element subset of $T$ with $a_i \neq b_i$.
For $\e=(\epsilon_1,\ldots,\epsilon_{m})\in (\Z/2)^{m}$,
let $c^\e_k=b_k$ if $\epsilon_k=0$ and $c^\e_k=a_k$ if $\epsilon_k=1$ (this
 convention will simplify some of our later formulas).  Denote the
corresponding chamber by $\CC_\e=(c^\e_1,\ldots,c^\e_{m})$.  The apartment
$A_{\bS}$ is the union of the $2^{m}$ chambers $\CC_\e$, and its fundamental
class is
\[[A_{\bS}]=\sum_\e (-1)^{\e} [\CC_\e] \quad \quad \text{where $(-1)^{\e} = (-1)^{\sum_i \epsilon_i}$}.\]
$X_m(T)$ is the $m$-fold join of the discrete set $T$, so
it is homotopy equivalent to a wedge of $(\abs{T}-1)^{m}$ copies of $S^{m-1}$. In particular, $\widetilde{\HH}_{m-1}(X_m(T);\Z)\iso \Z^N$ with $N={(\abs{T}-1)^m}$.
\end{example}

\subsection{Cycles in \texorpdfstring{$\Tits(M)$}{T(M)} and the class group \texorpdfstring{$\cl(\O)$}{clO} }
\label{section:structureTitsM}

Let $M$ be a rank $n$ projective $\O$-module and let $\T(M)$ be the Tits building discussed
in Example~\ref{ex:Titssummands}.  The space $\abs{\T(M)}$ is a wedge of $(n-2)$-dimensional spheres;
define $\St(M) = \widetilde{\HH}_{n-2}(\abs{\T(M)};\Z)$.  As we discussed in Example~\ref{ex:Titssummands}, we have
\begin{equation}
\label{eqn:identifysteinberg}
\St(M) \cong \St(M \otimes_O K) \cong \St_n(K).
\end{equation}
Our goal in this section is to relate $\St(M)$ to the homology of the 
much simpler complex $X_{n-1}(\cl(\O))$ that we introduced in Example~\ref{ex:joindiscrete}.  Our main theorem
will imply that there is a $\GL(M)$-invariant surjection
\[\St(M) \twoheadrightarrow \widetilde{\HH}_{n-2}(\abs{X_{n-1}(\cl(\O))};\Z).\]
Define a map
$\psi\colon \T(M) \rightarrow X_{n-1}(\cl(\O))$ via the formula
$\psi(U)=\big(\Rank(U),[U]\,\big)$.
Given $U,U' \in \Tits(M)$ with $U\subsetneq U'$, Lemma~\ref{lemma:summandsubspace}(b) says that $U$ is a summand of $U'$,
so $\Rank(U)<\Rank(U')$.  Thus $\psi$ is a poset map.
Elements $g \in \GL(M)$ and $U \in \Tits(M)$ determine an isomorphism $g\colon U\iso g(U)$, so $\psi(g(U)) = \psi(U)$. Therefore $\psi$ induces
a map $\overline{\psi}\colon \abs{\Tits(M)} / \GL(M) \rightarrow \abs{X_{n-1}(\class(\O))}$.

\begin{proposition}
\label{proposition:quotientdesc}
Let $M$ be a rank $n$ projective $\O$-module.
The map $\overline{\psi}\colon \abs{\Tits(M)} / \GL(M) \rightarrow \abs{X_{n-1}(\class(\O))}$ defined
above is then an isomorphism of cell complexes.
\end{proposition}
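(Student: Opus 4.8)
The plan is to deduce everything from the structure theory of finitely generated projective $\O$-modules recalled before Lemma~\ref{lemma:summandsubspace}. The first observation is that $\psi$ preserves ranks: if $U\subsetneq U'$ then $\Rank(U)<\Rank(U')$. Consequently, if $g\in\GL(M)$ carries a chain $U_1\subsetneq\cdots\subsetneq U_k$ of $\Tits(M)$ onto itself as a set of submodules, it must fix each $U_i$. Thus the $\GL(M)$-action on the ordered simplicial complex $\abs{\Tits(M)}$ is regular (no simplex is carried to itself by a nontrivial permutation of its vertices), so $\abs{\Tits(M)}/\GL(M)$ is again a cell complex whose cells are exactly the $\GL(M)$-orbits of chains of $\Tits(M)$. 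Since $\psi$ is a poset map, $\overline{\psi}$ is cellular and commutes with taking faces; and $X_{n-1}(\cl(\O))$ is itself the realization of a poset whose cells are the chains $(p_1,c_1)<\cdots<(p_k,c_k)$ with $1\le p_1<\cdots<p_k\le n-1$ and $c_i\in\cl(\O)$ (see Example~\ref{ex:joindiscrete}). It therefore suffices to show that $\psi$ is surjective on chains and that two chains with the same $\psi$-image lie in a common $\GL(M)$-orbit; granting this, $\overline{\psi}$ is a dimension-preserving bijection on cells whose inverse also respects faces (a subchain of a lift lifts the corresponding subchain), hence an isomorphism of cell complexes.

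For surjectivity I would lift a chain $(p_1,c_1)<\cdots<(p_k,c_k)$ to a flag of summands $U_1\subsetneq\cdots\subsetneq U_k\subsetneq M$ with $\Rank(U_i)=p_i$ and $[U_i]=c_i$, built one step at a time. The core subroutine is: for a rank-$r$ projective $\O$-module $W$ with $r\ge 2$, any $1\le s\le r-1$, and any $c\in\cl(\O)$, there is a proper nonzero summand $V\subseteq W$ of rank $s$ with $[V]=c$. Indeed, choose a rank-$1$ module $L$ with $[L]=c$ and a rank-$(r-s)$ module $Q$ with $[Q]=[W]-c$; then $L\oplus\O^{s-1}\oplus Q$ has the same rank and class as $W$, hence is isomorphic to $W$, and the image of $L\oplus\O^{s-1}$ is the desired $V$. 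Starting from $W=M$ (rank $n\ge 2$, $s=p_1\le n-1$) produces $U_1$. Inductively, writing $M=U_i\oplus W_i$, the module $W_i$ has rank $n-p_i\ge 2$ when $i<k$, so applying the subroutine inside $W_i$ with $s=p_{i+1}-p_i\le n-p_i-1$ and $c=c_{i+1}-c_i$ gives $V\subseteq W_i$; set $U_{i+1}=U_i\oplus V$, which is a summand of $M$ since $W_i=V\oplus V'$. This constructs the required flag.

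For injectivity, suppose chains $U_1\subsetneq\cdots\subsetneq U_k$ and $U'_1\subsetneq\cdots\subsetneq U'_k$ satisfy $\Rank(U_i)=\Rank(U'_i)$ and $[U_i]=[U'_i]$ for all $i$. By Lemma~\ref{lemma:summandsubspace}(b) each $U_{j-1}$ is a summand of $U_j$, so we may split $M=P_1\oplus\cdots\oplus P_{k+1}$ with $U_j=P_1\oplus\cdots\oplus P_j$ (taking $P_1=U_1$ and $M=U_k\oplus P_{k+1}$), and likewise $M=P'_1\oplus\cdots\oplus P'_{k+1}$ with $U'_j=P'_1\oplus\cdots\oplus P'_j$. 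With the conventions $p_0=c_0=0$, $p_{k+1}=n$, $c_{k+1}=[M]$, additivity of rank and of $[\,\cdot\,]$ gives $\Rank(P_j)=p_j-p_{j-1}=\Rank(P'_j)$ and $[P_j]=c_j-c_{j-1}=[P'_j]$, so $P_j\iso P'_j$ for every $j$. Assembling isomorphisms $\gamma_j\colon P_j\to P'_j$ into $g=\bigoplus_j\gamma_j\in\GL(M)$ yields $g(U_j)=U'_j$ for all $j$, so the two chains lie in one $\GL(M)$-orbit, completing the proof of Proposition~\ref{proposition:quotientdesc}.

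The argument is essentially bookkeeping layered on top of the structure theorem, so I do not anticipate a genuine obstacle; the step requiring the most care is surjectivity, where one must repeatedly manufacture summands of prescribed rank and class inside a given projective module and extend a partial flag without exhausting the ambient rank — all of which rests on the "realization" half of the structure theorem, namely that a rank-$n$ module of class $[M]$ is forced to be isomorphic to $M$.
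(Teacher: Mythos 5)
Your proof is correct and follows essentially the same approach as the paper: surjectivity is established by assembling a module of the right rank and class as a direct sum $\widehat U_1 \oplus \cdots \oplus \widehat U_{p+1}$ with prescribed rank and class increments and identifying it with $M$, and injectivity by decomposing both chains into successive complements $P_j$, $P_j'$ of matching rank and class and assembling piecewise isomorphisms into a single $g\in\GL(M)$. The only difference is cosmetic --- you build the flag inductively and spell out that the rank-strictness of $\psi$ makes the $\GL(M)$-action regular (so the quotient is genuinely a cell complex), a point the paper leaves implicit.
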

\begin{proof}
The proposition is equivalent to the following two claims; the first implies that $\overline{\psi}$ is surjective
and the second implies that $\overline{\psi}$ is injective.
\begin{enumerate}[nosep]
\item Let $(j_1,c_1) \lneq \cdots \lneq (j_p,c_{p})$ be a chain in $X_{n-1}(\class(\O))$.  Then
there exists a chain $U_1 \subsetneq \cdots \subsetneq U_p$ in $\Tits(M)$ such that
$\psi(U_i) = (j_i,c_i)$ for all $1 \leq i \leq p$.
\item For $\ell=1,2$ let $V_1^{\ell} \subsetneq \cdots \subsetneq V_q^{\ell}$ be a chain in $\Tits(M)$.  Assume
that $\psi(V_i^{1}) = \psi(V_i^2)$ for all $1 \leq i \leq q$.  Then there exists some $g \in \GL(M)$
such that $g(V_i^1) = V_i^2$ for all $1 \leq i \leq q$.
\end{enumerate}
We begin with the first claim.  Set 
$(j_0,c_0) = (0,0)$ and $(j_{p+1},c_{p+1}) = (n-j_p,[M])$.
For $1 \leq i \leq p+1$, let $\widehat{U}_i$ be a projective $\O$-module
of rank $j_i-j_{i-1}$ with $[\widehat{U}_i] = c_i - c_{i-1}$.
Then $\widehat{U}_1 \oplus \cdots \oplus \widehat{U}_{p+1}$ is a projective $\O$-module of rank $n$ satisfying
\[[\widehat{U}_1 \oplus \cdots \oplus \widehat{U}_{p+1}] = c_1 + (c_2 - c_1) + \cdots + (c_p - c_{p-1}) + ([M] - c_p) = [M],\]
so there exists an isomorphism $\eta\colon \widehat{U}_1 \oplus \cdots \oplus \widehat{U}_{p+1} \rightarrow M$.
The desired summands $U_i$ are then
$U_i\coloneq  \eta(\widehat{U}_1) \oplus \cdots \oplus \eta(\widehat{U}_i)$.

We now turn to the second claim.  For both $\ell=1$ and $\ell=2$, set $V_0^{\ell} = 0$ and $V_{q+1}^{\ell} = M$.
For $1 \leq i \leq q+1$, Lemma~\ref{lemma:summandsubspace}(b) implies that $V_{i-1}^{\ell}$ is a summand
of $V_i^{\ell}$, so there exists some $\widehat{V}_i^{\ell} \subset V_i^{\ell}$ such that
$V_i^{\ell} = V_{i-1}^{\ell} \oplus \widehat{V}_i^{\ell}$.  We thus have
$V_i^{\ell} = \widehat{V}_1^{\ell} \oplus \cdots \oplus \widehat{V}_i^{\ell}$.  Observe that
\[[\widehat{V}_i^{1}] = [V_i^{1}] - [V_{i-1}^{1}] = c_i - c_{i-1} = [V_i^2] - [V_{i-1}^2] = [\widehat{V}_i^{2}].\]
and similarly $\Rank(\widehat{V}_i^1)=\Rank(\widehat{V}_i^2)$.  Therefore there exists an isomorphism $\zeta_i\colon \widehat{V}_i^{1} \rightarrow \widehat{V}_i^{2}$.
The desired $g \in \GL(M)$ is then the composition
\[M = \widehat{V}_1^{1} \oplus \cdots \oplus \widehat{V}_{q+1}^1 \stackrel{\zeta_1 \oplus \cdots \oplus \zeta_{q+1}}{\longrightarrow} \widehat{V}_1^{2} \oplus \cdots \oplus \widehat{V}_{q+1}^2 = M. \qedhere\]
\end{proof}

We now turn to the main technical theorem of \S\ref{section:prelimnonvanishing}, which asserts that the map
\[\psi_{\ast}\colon \widetilde{\HH}_{n-2}(\Tits(M);\Z) \rightarrow \widetilde{\HH}_{n-2}(X_{n-1}(\class(\O));\Z)\] 
is surjective.  We will use the notation for apartments and chambers in these buildings that
was introduced in Examples~\ref{ex:Titssummands} and~\ref{ex:joindiscrete}.
The reason that this result is so hard to prove is that for a general  apartment $A_{\II}$
in $\Tits(M\tensor_\O K)$, it seems quite difficult to describe the image $\psi_{\ast}([A_{\II}])$ of its fundamental class in the homology of $X_{n-1}(\class(\O))$.
For one thing, we saw in Example~\ref{ex:Titssummands} that the summands $U_X$ that make up the chambers of $A_{\II}$ are not easily understood; for example, knowing the images $\psi(I_k)$ of the rank~1 summands does not allow us to determine $\psi(U_X)$. For another, the apartment $A_\II$
consists of $n!$ chambers, while the apartments in $X_{n-1}(\class(\O))$ are made up of
only $2^{n-1}$ chambers.  Since $n!\gg 2^{n-1}$, in general we expect that $\psi(A_\II)$
will be draped over many apartments in $X_{n-1}(\class(\O))$.   However, the following
proposition guarantees that we can find certain special apartments $A_\II$ whose
image is just a single apartment $A_{\bS}$, with the excess $n!-2^{n-1}$ chambers
all ``folded away'' and canceling each other out precisely, leaving only the desired homology class.

\begin{proposition}[{\bf Detecting spaces of cycles}]
\label{prop:foldedapartment}
Let $\O$ be a Dedekind domain, let $M$ be a rank $n$ projective $\O$-module, and
let $A_{\bS}$ be an apartment of $X_{n-1}(\class(\O))$.  Then there exists a frame
$\II$ of $M$ such that
$\psi_{\ast}([A_\II])=[A_{\bS}] \in \widetilde{\HH}_{n-2}(X_{n-1}(\class(\O));\Z)$.
In particular, 
$\psi_\ast\colon \widetilde{\HH}_{n-2}(\Tits(M);\Z)\to \widetilde{\HH}_{n-2}(X_{n-1}(\class(\O));\Z)$ is surjective.
\end{proposition}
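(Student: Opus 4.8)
The plan is to deduce the ``In particular'' from the first assertion: since $X_{n-1}(\cl(\O))$ is a building, the apartment classes $[A_\bS]$ span its top homology $\widetilde{\HH}_{n-2}(X_{n-1}(\cl(\O));\Z)$, so it suffices to produce, for each apartment $A_\bS$, a frame $\II$ of $M$ with $\psi_*([A_\II]) = [A_\bS]$. I would build $\II$ by induction on $n = \Rank(M)$, peeling off the top entry of $\bS = (S_1,\ldots,S_{n-1})$, $S_k = \{a_k,b_k\}$; it is convenient to strengthen the inductive statement to record in addition that the standard flag $U_{\{1\}}\subset U_{\{1,2\}}\subset\cdots$ of the frame satisfies $[U_{\{1,\ldots,k\}}] = a_k$ for all $k$. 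The base cases $n = 1$ and $n = 2$ are direct: for $n = 2$, given $\{a_1,b_1\}$ one picks a line $L_1$ with $[L_1\cap M] = a_1$ and then a line $L_2\neq L_1$ with $[L_2\cap M] = b_1$, using Lemma~\ref{lemma:summandsubspace}(c) together with the fact that a rank-$1$ submodule of a rank-$1$ projective $\O$-module can have any class in $\cl(\O)$; then $\{L_1,L_2\}$ is a frame and $\psi_*$ of its class is $a_1 - b_1 = [A_\bS]$.

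For the inductive step write $\bS' = (S_1,\ldots,S_{n-2})$. First choose a rank-$(n-1)$ summand $W\subset M$ with $[W] = a_{n-1}$ (a projective $\O$-module of rank $\geq 2$ has summands of every class), and apply the strengthened inductive hypothesis to $W$ and $\bS'$ to obtain lines $L_1,\ldots,L_{n-1}$ spanning $W\tensor_\O K$; set $W_k = W\cap\spn_K(L_1,\ldots,L_k)$, so $W_k = U_{\{1,\ldots,k\}}$ and $[W_k] = a_k$. Next choose a rank-$(n-1)$ summand $W'\subset M$ with $W_{n-2}\subset W'$ and $[W'] = b_{n-1}$; this amounts to choosing a rank-$1$ summand of the rank-$2$ module $M/W_{n-2}$ of prescribed class, and since $a_{n-1}\neq b_{n-1}$ one checks that $W\cap W' = W_{n-2}$. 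Finally choose a line $L_n\subset W'\tensor_\O K$ complementary to $W_{n-2}\tensor_\O K$, whose intersection class $[L_n\cap M]$ is a value to be pinned down below; since $L_n\not\subset W\tensor_\O K$ the set $\II = \{L_1,\ldots,L_n\}$ is a frame for $M$, and by construction $U_{\{1,\ldots,n-2,n\}} = W'$ has class $b_{n-1}$.

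To evaluate $\psi_*([A_\II]) = \sum_{\sigma\in S_n}(-1)^\sigma[\psi(\VV_\sigma)]$ I would split $S_n$ by the position $p = \sigma^{-1}(n)$. When $p = n$ the prefixes $\sigma([1]),\ldots,\sigma([n-1])$ avoid $n$, hence lie in $W\tensor_\O K$, so $\VV_\sigma$ is a chamber of $\Tits(W)$ capped by the vertex $W$; summing over these $\sigma$ and invoking the inductive hypothesis for $W$ gives the class $[A_{\bS'}]$ capped with the vertex $(n-1,a_{n-1})$, i.e.\ exactly the part of $[A_\bS]$ on chambers whose top vertex is $(n-1,a_{n-1})$. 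When $p < n$ the summands $U_{\sigma([k])}$ for $k\geq p$ involve $L_n$, and this is the heart of the matter: one must choose $[L_n\cap M]$ so that the remaining $\sigma$ break up into a $2^{n-1}$-element ``good'' set, whose chambers are carried bijectively and with correct signs onto the chambers of $A_\bS$ with top vertex $(n-1,b_{n-1})$, together with a complementary set carrying an explicit adjacent-transposition map $\sigma\mapsto\sigma\cdot(i,i+1)$ that is sign-reversing and $\psi$-image-preserving (it alters only the $i$-th vertex $[U_{\sigma([i])}]$, and the two classes there coincide), so those chambers cancel in pairs. Adding the $p = n$ and $p < n$ contributions then yields $\psi_*([A_\II]) = [A_\bS]$.

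The main obstacle is precisely the ``$p < n$'' analysis above: the classes $[U_{\sigma([k])}]$ for subsets containing $n$ are the ones governed by saturation (Example~\ref{ex:Titssummands}), so are not determined by the individual classes $[L_j\cap M]$, and one must control them through the nested summands built from $W'$, $L_n$, and $W_{n-2}$ in order both to pin down the correct value of $[L_n\cap M]$ and to identify the good subset of $S_n$ and verify the sign-reversing pairing on its complement. I expect this to need a secondary induction on rank tracking these saturated submodules --- the ``delicate combinatorial argument'' flagged in the introduction --- but the skeleton above (reduce to hitting each $A_\bS$; peel the top level; split $S_n$ by $\sigma^{-1}(n)$; identify $2^{n-1}$ surviving chambers and pair off the rest) is the structure I would follow.
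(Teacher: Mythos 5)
Your outline is a genuinely different strategy from the paper's: you propose an induction on $n$ peeling off the top entry of $\bS$, whereas the paper gives a single, non-inductive construction followed by one global combinatorial cancellation over all of $S_n$. Both are plausible roads, but your sketch has a concrete gap in the inductive step that the paper's argument is designed precisely to avoid.

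The problem is the construction of $L_n$. You impose only that $L_n \subset W'\tensor_\O K$ is complementary to $W_{n-2}\tensor_\O K$, and you treat the class $[L_n\cap M]$ as the one remaining free parameter. But controlling one class is far too little. For the ``$p<n$'' permutations you need to understand $[U_{\sigma([k])}]$ for every subset $\sigma([k])\ni n$ with $k<n-1$, and by the saturation phenomenon in Example~\ref{ex:Titssummands} these classes are not determined by the classes $[L_j\cap M]$; they depend on the actual line $L_n$. For the good/bad split to have any chance, the relevant saturations must land on prescribed summands of class $a_k$ (resp.\ $b_k$), and that forces $L_n\cap M$ to lie \emph{simultaneously} inside $n-1$ carefully chosen rank-$(n-1)$ summands --- one per index $i$, each of class $b_{n-1}$ (in your labeling) and extending the $i$-th branch of a nested family built at lower rank. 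The paper makes this explicit: it constructs, before choosing any lines, nested flags $B_k$ and $A_k^{(i)}$ with $A_{k-1}^{(i)}\subsetneq A_k^{(i)}\subset B_{k+1}$ and prescribed classes, and then \emph{defines} $I_k = \bigcap_{i<k}A_{k-1}^{(i)}$; the trichotomy $U_{[k]} = B_k$, $U_{[k+1]\setminus\{i\}} = A_k^{(i)}$, ``else unknown'' is a direct consequence of those containments and is exactly what drives both the identification of the $2^{n-1}$ good permutations and the sign-reversing pairing on the bad ones. If you want to keep your inductive scheme, the inductive hypothesis must therefore be strengthened to carry along the whole nested family $\{A_k^{(i)}\}$ (or equivalent data), not merely the sequence of classes $[U_{[k]}]$; with only the latter, there is no way to pin down $L_n$ correctly, and a generic choice will give wrong classes on the subsets containing $n$, so no pairing can save you.

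Two smaller points. The surviving good set in your $p<n$ case should have $2^{n-2}$ elements (matching the chambers of $A_\bS$ with top vertex $(n-1,b_{n-1})$), not $2^{n-1}$. And the cancellation in the paper is not by an adjacent transposition $\sigma\mapsto\sigma(i\ \,i{+}1)$: it pairs a bad $\sigma$ with $\sigma(x_\sigma\ \,y_\sigma)$ where $x_\sigma$ and $y_\sigma$ are typically non-adjacent (defined from the running maximum $s_\sigma$), and a real argument (Claim~\ref{claim:invol}) is needed to show this is a well-defined involution and that it fixes all the $\psi$-images. That mechanism is the ``delicate combinatorial argument'' you flagged, and you have not supplied an alternative to it.
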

\begin{proof}
Write $\bS=(S_1,\ldots,S_{n-1})$ with 
$S_i=\{a_i,b_i\}$ for some $a_i,b_i\in\class(\O)$.  For $\e=(\epsilon_1,\ldots,\epsilon_{n-1})\in (\Z/2)^{n-1}$, 
let $c^\e_k=b_k$ if $\epsilon_k=0$, and $c^\e_k=a_k$ if $\epsilon_k=1$ (as
we noted in Example~\ref{ex:joindiscrete}, this convention will simplify some of our formulas).  The apartment 
$A_{\bS}$ is the union of the $2^{n-1}$ chambers $\CC_\e=(c^\e_1,\ldots,c^\e_{n-1})$, and its fundamental 
class is 
\[[A_{\bS}]=\sum_\e (-1)^{\e} [\CC_\e] \quad \quad \text{with $(-1)^{\e} = (-1)^{\sum_i \epsilon_i}$}.\]

Using Proposition~\ref{proposition:quotientdesc} we may choose summands $B_k$ for $1 \leq k \leq n-1$ and $A_k^{(i)}$ for $1 \leq i \leq k \leq n-1$ with the following properties:
\begin{align*}
0=B_0&\subsetneq B_1 \subsetneq \cdots \subsetneq B_{n-1} \subsetneq B_n=M& \Rank(B_k)&=k,[B_k]=b_k.\\
B_{i-1}& \subsetneq A_i^{(i)} \subsetneq A_{i+1}^{(i)} \subsetneq \cdots \subsetneq A_{n-1}^{(i)} \subsetneq M& \Rank(A_k^{(i)})& = k, [A_k^{(i)}] = a_k, A_k^{(i)} \subset B_{k+1}
\end{align*}
It may not be immediately obvious how these should be chosen, so we take the time to explain carefully. First, choose all the $B_k$, which we may do by Proposition~\ref{proposition:quotientdesc}. Then for each $i$, we will choose the $A_k^{(i)}$ inductively in increasing order of $k$ to satisfy $A_{k-1}^{(i)}\subsetneq A_k^{(i)}\subsetneq B_{k+1}$ (or $B_{i-1}\subsetneq A_i^{(i)}\subsetneq B_{i+1}$ in the base case when $k=i$). To choose each $A_k^{(i)}$ thus requires solving the following problem: given $k$, given $y\in \cl(\O)$, and given fixed summands $X\subsetneq Z$ with $\Rank(X)=k-1$ and $\Rank(Z)=k+1$, find a rank $k$ summand $Y$ with $X\subsetneq Y\subsetneq Z$ and $[Y]=y$. By Lemma~\ref{lemma:summandsubspace}(b), $X$ is a summand of $Z$, so write $Z=X\oplus U$. Applying Proposition~\ref{proposition:quotientdesc} to $U$, we can find a rank 1 summand $L\subset U$ with $[L]=y-[X]$; setting $Y=X\oplus L$ gives the desired solution.

We now define
\[I_1\coloneq B_1 \quad \text{and} \quad I_k \coloneq \bigcap_{i=1}^{k-1} A_{k-1}^{(i)} \quad \text{for $1<k\leq n$}.\]
Set $\II = \{I_1,\ldots,I_n\}$.  We will prove that $\II$ is a frame of $M$ and that
$\psi_{\ast}([A_{\II}]) = [A_{\bS}]$.

\BeginClaims
\begin{claims}
\label{claim:frame}
For each $1\leq k\leq n$ the intersection $I_k$ is a rank~1 summand of $M$.  Also, $I_1 + \cdots + I_k$ is a rank~$k$ $\O$-submodule
of $M$ (not necessarily a summand).  In particular, the rank of $I_1 + \cdots I_n$ is $n$, so
$\II$ is a frame of $M$.
\end{claims}
\begin{proof}[Proof of Claim~\ref{claim:frame}]
The bijection $V\mapsto M\cap V$ of Lemma~\ref{lemma:summandsubspace}(c) preserves intersections, so 
the intersection of summands of $M$ is also a summand of $M$.  In particular, each $I_k$ is a summand of $M$ 
and $I_k\tensor_\O K=\bigcap_{i=1}^{k-1} A_{k-1}^{(i)} \tensor_\O K$.  The intersection of the $k-1$ 
hyperplanes $A_{k-1}^{(1)} \tensor_\O K, \ldots, A_{k-1}^{(k-1)} \tensor_{\O} K$ 
inside the $k$-dimensional vector space $B_k\tensor_\O K$ cannot vanish, so $\Rank(I_k) \geq 1$. We will prove equality holds, but not until the end of the proof.

Since each $A_{k-1}^{(i)}$ is contained in $B_k$, we have $I_k\subset B_k$.  Since $B_j\subset B_k$ for $j<k$, 
we see that $I_1+\cdots+I_k \subset B_k$ for all $k$.

We next claim that $I_k\cap B_{k-1}=0$ for all $k$.   The proof will be by induction on $k$.  The base case
$k=1$ is trivial since $I_1=B_1$, so assume that $k > 1$.  For $1 \leq i \leq k-2$, we have
$[A_{k-1}^{(i)}] = a_{k-1} \neq b_{k-1} = [B_{k-1}]$, so $A_{k-1}^{(i)}\neq B_{k-1}$. 
It follows that $A_{k-1}^{(i)}\cap B_{k-1}$ is a summand of rank at most $k-2$.  But by construction, the rank $k-2$ summand
$A_{k-2}^{(i)}$ is contained in both $A_{k-1}^{(i)}$ and in $B_{k-1}$, so it must be their intersection: 
$A_{k-1}^{(i)} \cap B_{k-1}=A_{k-2}^{(i)}$ for $1 \leq i \leq k-2$.  
In a similar way, we have $A_{k-1}^{(k-1)} \cap B_{k-1} = B_{k-2}$. Combining these, we find that
\begin{align*}
I_{k} \cap B_{k-1}&=\big(\bigcap_{i=1}^{k-1} A_{k-1}^{(i)}\big)\cap B_{k-1}=\bigcap_{i=1}^{k-1}(A_{k-1}^{(i)}\cap B_{k-1})\\
&=\big(\bigcap_{i=1}^{k-2}A_{k-2}^{(i)}\big)\cap B_{k-2}=I_{k-1}\cap B_{k-2}.
\end{align*}
By induction, this is zero, as claimed.

Finally, we prove $I_k$ has rank $1$ and that $I_1 + \cdots + I_k$ has rank $k$.  The proof of this
is by induction on $k$.  The base case $k=1$ is trivial, so assume that $k>1$, that $I_{k-1}$ has rank $1$,
and that $I_1 + \cdots + I_{k-1}$ has rank $k-1$.
From the containments
\[I_1+\cdots+I_{k-1}\subset I_1+\cdots+I_k\subset B_k\]
we see that $k-1\leq \Rank(I_1+\cdots+I_k)\leq k$. Assume for the sake of contradiction that $\Rank(I_1+\cdots+I_k)<k$ 
or that $\Rank(I_k)>1$.  In either case, the subspaces  $I_k\tensor_\O K$ and $(I_1+\cdots+I_{k-1})\tensor_\O K$  must intersect nontrivially inside the $k$-dimensional vector space $B_k\tensor_\O K$. Multiplying by an appropriate denominator, it follows that $(I_1+\cdots+I_{k-1})\cap I_k\neq 0$. However we proved above that $B_{k-1}\cap I_k=0$; since $I_1+\cdots+I_{k-1}\subset B_{k-1}$, this is a contradiction. Therefore $\Rank(I_1+\cdots+I_k)=k$ and $\Rank(I_k)=1$, as desired. This concludes the proof of Claim~\ref{claim:frame}.
\end{proof}

It remains to prove that $\psi_{\ast}([A_{\II}]) = [A_{\bS}]$.  As explained in Example~\ref{ex:Titssummands},
the chambers of the apartment $A_\II$ are given by
\[\VV_\sigma\coloneq\quad U_{\sigma([1])}\lneq U_{\sigma([2])}\lneq\cdots\lneq U_{\sigma([n-1])}\qquad\text{ for }\sigma\in S_n.\]
The image $\psi(\VV_\sigma)\subset \psi(A_\LL)\subset X_{n-1}(\class(\O))$ is the 
chamber determined by the sequence
\[\big(\,[U_{\sigma([1])}],\,[U_{\sigma([2])}],\ldots,[U_{\sigma([n-1])}]\,\big).\]
For most subsets $X\subset [n]$, we cannot say anything about the summand $U_X$ or its 
class $[U_X]$, but there are two exceptions.
\begin{claims}[Trichotomy]
\label{claim:trichotomy}\ \change

\begin{enumerate}[label={\normalfont \arabic*.},topsep=3pt,itemsep=2pt,parsep=2pt]
\item $U_{[k]} = B_k$, so $[U_{[k]}] = b_k$.  
\item If $X = [k+1] \setminus \{i\}$ for some $1 \leq i \leq k$, then
$U_X = A_k^{(i)}$, so $[U_X]=a_k$.
\item For other subsets $X\subset [n]$, we know nothing.
\end{enumerate}
\end{claims}
\begin{proof}[Proof of Claim~\ref{claim:trichotomy}]
Case 1: We saw above that $I_1+\cdots+I_k\subset B_k$, and Claim~\ref{claim:frame} states that $\Rank(I_1+\cdots+I_k)=k$, so $(I_1+\cdots+I_k)\tensor_\O K=B_k\tensor_\O K$. Since $U_{[k]}=M\cap ((I_1+\cdots+I_k)\tensor_\O K)$ by definition, this means that $U_{[k]}=M\cap (B_k\tensor_\O K)$. But $B_k$ is a summand of $M$, so $M\cap (B_k\tensor_\O K)=B_k$.
 
Case 2: We begin by observing that $I_x\subset A_k^{(i)}$ whenever $x\leq k+1$ and $i\neq x$.  For $i<x$ this follows 
from  $I_x\subset A_{x-1}^{(i)}$ (by the definition of $I_x$) and 
$A_{x-1}^{(i)}\subset A_k^{(i)}$.  For $i> x$, we use instead that 
$I_x\subset B_x\subset B_{i-1}\subset A_i^{(i)}\subset A_k^{(i)}$.  
Let $X=[k+1]\setminus\{i\}$ for some $1\leq i\leq k$. It follows from Claim~\ref{claim:frame} that $I_1+\cdots+\widehat{I}_i+\cdots+I_{k+1}$ has rank $k$, and the beginning of this paragraph shows that this submodule is contained in the rank $k$ summand $A_k^{(i)}$. Just as above, this implies that $U_X=M\cap((I_1+\cdots+\widehat{I}_i+\cdots+I_{k+1})\tensor_\O K)$ coincides with $A_k^{(i)}$. This concludes the proof of Claim~\ref{claim:trichotomy}.
\end{proof}

We can analyze this trichotomy in terms of $\sigma\in S_n$, as follows.  Given 
$\sigma\in S_n$, think of $\sigma$ as a function $\sigma\colon [n]\to [n]$, and 
define $s_\sigma\colon [n]\to [n]$ by 
\[s_\sigma(k)=\sup\Set{$\sigma(i)$}{$1\leq i\leq k$}.\]
The function $s_\sigma$ is monotone increasing and satisfies $k\leq s_\sigma(k)\leq n$.
If $s_\sigma(k)=k$ then it must be that $\sigma([k])=[k]$, so $[U_{\sigma([k])}]=[B_k]=b_k$. 
If $s_\sigma(k)=k+1$, it must be that $\sigma([k])=[k+1]-\{i\}$ for some $1\leq i\leq k$, so 
$[U_{\sigma([k])}]=[A_k^{(i)}]=a_k$.  To sum up, we have
\begin{equation}
\label{eq:clVsigma}
[U_{\sigma([k])}]=\begin{cases}
b_k&\text{if }s_\sigma(k)=k\\
a_k&\text{if }s_\sigma(k)=k+1\\
\text{unknown}&\text{otherwise}
\end{cases}
\end{equation}
Accordingly, we divide the permutations $\sigma\in S_n$ into two subsets: we say that 
$\sigma\in S_n$ is \emph{good} if $s_\sigma(k)\in \{k,k+1\}$ for all $k$, and say that 
$\sigma$ is \emph{bad} otherwise.  

\begin{claims}
\label{claim:gooddesc}
The good permutations are the $2^{n-1}$ permutations of the form
\[\sigma_{\e}\coloneq (1\ \,2)^{\epsilon_1}(2\ \,3)^{\epsilon_2}\cdots (n-1\ \,n)^{\epsilon_{n-1}} \in S_n\]
for some $\e = (\epsilon_1,\ldots,\epsilon_{n-1}) \in (\Z/2)^{n-1}$. For such a good permutation $\sigma=\sigma_\e$,
\[s_\sigma(k)=k\iff\epsilon_k=0\qquad s_\sigma(k)=k+1\iff\epsilon_k=1\qquad\text{for }k<n.\]
\end{claims}
\begin{proof}[Proof of Claim~\ref{claim:gooddesc}]
The proof is by induction on $n$.  The base case is $n=1$, where
the claim is trivial.  Assume now that $n\geq 2$ and that the claim is true
for smaller $n$.  Let $\sigma \in S_n$ be a good permutation.
Since $s_\sigma(k)\geq \sigma(k)$, the 
condition $s_\sigma(k)\leq k+1$ for all $k$ implies that $\sigma(k)\neq n$ for any $k<n-1$. 
Therefore we have either $\sigma(n)=n$ or $\sigma(n-1)=n$.  In the first case, 
the restriction $\sigma|_{[n-1]}$ is a good permutation in $S_{n-1}$, so the claim holds by induction with $\epsilon_{n-1}=0$; since $\sigma(n)=n$ in this case, we must have $s_\sigma(n-1)=n-1$. 
In the second case, we have $s_\sigma(n-1)=n$. Moreover, the permutation $\sigma'\coloneq \sigma (n-1\,\ n)$ satisfies 
$\sigma'(n)=n$ and is good (since $\sigma'|_{[n-2]}=\sigma|_{[n-2]}$), so applying the first case to $\sigma'$ proves Claim~\ref{claim:gooddesc}.
\end{proof}

We can now divide the chambers of $A_{\II}$ between the two chains defined by
\[[A_{\II}]^{\text{good}} \coloneq  \sum_{\sigma \in S_n \text{ good}} (-1)^{\sigma} \VV_{\sigma} \quad \text{and} \quad [A_{\II}]^{\text{bad}} \coloneq \sum_{\sigma \in S_n \text{ bad}} (-1)^{\sigma} \VV_{\sigma},\]
so $[A_{\II}] = [A_{\II}]^{\text{good}} + [A_{\II}]^{\text{bad}}$ as chains in $C_{n-2}(\Tits(M))$.
\begin{claims}
\label{claim:dealgood}
We have $\psi_{\ast}([A_{\II}]^{\text{good}}) = [A_{\bS}]$.
\end{claims}
\begin{proof}[Proof of Claim~\ref{claim:dealgood}]
Consider $\e = (\epsilon_1,\ldots,\epsilon_{n-1}) \in (\Z/2)^{n-1}$.
By Claim~\ref{claim:gooddesc}, $s_{\sigma_\e}(k)=k+\epsilon_k$ for all $1\leq k<n$.  By~\eqref{eq:clVsigma} 
this implies $[U_{\sigma_\e([k])}]=c^\e_k$ for all $k$ (recall that $c^\e_k$ is either 
$b_k$ or $a_k$ depending on whether $\epsilon_k$ is $0$ or $1$).  Therefore the image 
$\psi(\VV_{\sigma_\e})$ of the chamber $\VV_{\sigma_\e}$ is precisely the chamber 
$\CC_\e$.  Since 
\[(-1)^{\sigma_\e}=(-1)^{\sum \epsilon_i}=(-1)^\e,\]
the orientations of $\VV_{\sigma_\e}$ and $\CC_\e$ agree, proving Claim ~\ref{claim:dealgood}.\end{proof}

To deal with $[A_{\II}]^{\text{bad}}$, we 
define an involution $\sigma \mapsto \overline{\sigma}$ on the set of bad permutations 
$\sigma\in S_n$ as follows. Given a bad permutation $\sigma$, define
\begin{align*}
x_\sigma&=\sup\Set{$x\in \{2,\ldots,n-1\}$}{$s_\sigma(x-1)>x$},\\
y_\sigma&=\inf\Set{$y\in \{x_\sigma+1,\ldots,n\}$}{$s_\sigma(y)=y$}.
\end{align*}
There exists some $x$ with $s_\sigma(x-1)>x$ if and only if $\sigma$ is bad, so $x_\sigma$ 
is well-defined.  The second condition is always satisfied by $y=n$, so $y_\sigma$ is 
well-defined as well.  We define 
$\overline{\sigma}\coloneq \sigma (x_\sigma\ \,y_\sigma)\in S_n$.

\begin{claims}
\label{claim:invol}
The map $\sigma \mapsto \overline{\sigma}$ is an involution on the set
of bad permutations.
\end{claims}
\begin{proof}[Proof of Claim~\ref{claim:invol}]
Let $\sigma \in S_n$ be a bad permutation. We will show that $s_{\overline{\sigma}}=s_\sigma$; since $x_\sigma$ and $y_\sigma$ only depend on $s_\sigma$, 
this implies that  $x_{\overline{\sigma}}=x_\sigma$ and $y_{\overline{\sigma}}=y_\sigma$ and thus that $\overline{\overline{\sigma}} = \sigma$.

For $k<x_\sigma$ or $k\geq y_\sigma$ we have $\sigma([k])=\overline{\sigma}([k])$, so $s_{\overline{\sigma}}(k)=s_\sigma(k)$ automatically.
Considering the remaining terms we find that
\begin{equation}
\label{eq:xsys}
\text{for }x_\sigma\leq k<y_\sigma,\quad \text{ we must have }s_\sigma(k)=k+1.
\end{equation}
Indeed, to have $s_\sigma(k)>k+1$ 
would contradict the definition of $x_\sigma$, and to have $s_\sigma(k)=k$ would 
contradict the definition of $y_\sigma$. 
%
It follows that $\sigma(k)=k+1$ for 
$x_\sigma<k<y_\sigma$. By the pigeonhole principle, 
$\sigma(y_\sigma)\leq x_{\sigma}+1$.  But if $\sigma(y_\sigma)$ were $x_{\sigma}+1$ we 
could not have $s_\sigma(x_\sigma)=x_\sigma+1=s_\sigma(x_\sigma-1)$, so in fact $\sigma(y_\sigma)\leq x_\sigma$. 
For the same reason, we must have 
$\sigma(x_\sigma)\leq x_\sigma$.

Since $\sigma(x_\sigma)\leq x_\sigma$ and $\sigma(y_\sigma)\leq x_\sigma$ while $\sup\sigma([k])=s_\sigma(k)=k+1>x_\sigma$ for $x_\sigma\leq k<y_\sigma$, we see that neither $\sigma(x_\sigma)$ nor 
$\sigma(y_\sigma)$ ever realizes this supremum. Therefore exchanging $\sigma(x_\sigma)$ and $\sigma(y_\sigma)$ will not change $\sup \sigma([k])$ for any $k$. This shows that $s_\sigma=s_{\overline{\sigma}}$, proving Claim~\ref{claim:invol}.
\end{proof}

\begin{claims}
\label{claim:dealbad}
We have $\psi_{\ast}([A_{\II}]^{\text{bad}}) = 0$.
\end{claims}
\begin{proof}[Proof of Claim~\ref{claim:dealbad}]
Let $\sigma \in S_n$ be a bad permutation.  By definition, we have
$(-1)^{\overline{\sigma}} = (-1) \cdot (-1)^{\sigma}$, so it is enough to prove
that that the chambers $\psi(\VV_{\sigma})$ and $\psi(\VV_{\overline{\sigma}})$ coincide. 
In other words, we must show that
$[U_{\sigma([k])}]=[U_{\overline{\sigma}([k])}]$ for all $1\leq k\leq n-1$.
For all $k<x_\sigma$ or $k\geq y_\sigma$, we have $(x_\sigma\ \,y_\sigma)\big([k])=[k]$, so 
$\overline{\sigma}([k])=\sigma([k])$ and the claim is automatic.  For the remaining 
$x_\sigma\leq k<y_\sigma$, we showed in~\eqref{eq:xsys} that $s_\sigma(k)=s_{\overline{\sigma}}(k)=k+1$. 
By~\eqref{eq:clVsigma}, for such $k$ we have 
$[U_{\sigma([k])}]=a_k=[U_{\overline{\sigma}([k])}]$. This concludes the proof of Claim~\ref{claim:dealbad}.
\end{proof}

By definition $[A_{\II}] = [A_{\II}]^{\text{good}} + [A_{\II}]^{\text{bad}}$, and Claims~\ref{claim:dealgood} and~\ref{claim:dealbad} state that 
\[\psi_{\ast}([A_{\II}]^{\text{good}}) = [A_{\bS}] \quad \text{and} \quad \psi_{\ast}([A_{\II}]^{\text{bad}}) = 0,\]
so $\psi_{\ast}([A_{\II}]) = [A_{\bS}]$, as desired. This concludes the proof of Proposition~\ref{prop:foldedapartment}.
\end{proof}

\subsection{Proof of the Non-Vanishing and Non-Integrality Theorems}
\label{section:nonvanishingnonintegrality}

In this section we prove Theorems~\ref{maintheorem:nonvanishing} and~\ref{maintheorem:nonintegrality}. In fact, we prove the stronger Theorems~\ref{thm:nonvanishingstronger} and~\ref{thm:nonintegralitystronger}, which apply to the special automorphism groups of \emph{arbitrary} (not necessarily free) finite rank projective modules.  If $M$ is a rank $n$ projective $\O_K$-module, its special automorphism group $\SL(M)$ is also a lattice in the semisimple Lie group $\SL(M\otimes \R)$, commensurable with 
but not in general isomorphic to $\SL_n\O_K$.  Since $\SL(M)$ is commensurable with $\SL_n \O_K$, its rational
cohomological dimension equals $\SLvcd = \vcd(\SL_n\O_K)$.



\begin{theorem-prime}{maintheorem:nonvanishing}
\label{thm:nonvanishingstronger}
Let $K$ be a number field with ring of integers $\O_K$ and let $M$ be a rank $n$ projective $\O_K$-module. Then for all $n\geq 2$,
\[\dim \HH^{\SLvcd}(\SL(M);\Q)\geq (\abs{\class(\O_K)}-1)^{n-1}.\]
\end{theorem-prime}
 Theorem~\ref{maintheorem:nonvanishing}
is the special case of Theorem~\ref{thm:nonvanishingstronger} with $M \iso \O_K^n$. 
\begin{proof}
Define $\St(M)\coloneq \widetilde{\HH}_{n-2}(\Tits(M);\Z)$. 
Borel--Serre's work in \cite{BorelSerreCorners} applies to $\SL(M)$ and shows that it satisfies Bieri--Eckmann duality
with rational dualizing module $\St(M)$ (see \S \ref{section:structureTitsM} for the definition of this; 
the key point here is that $\SL(M) \subset \SL_n(K)$ and the group
$\SL_n(K)$ acts on the Borel--Serre bordification of the symmetric space; c.f. \eqref{eqn:identifysteinberg}).
Therefore just as in Equation~\eqref{eq:dualizing}, we have
\[\HH^{\SLvcd}(\SL(M);\Q)\iso(\St(M)\tensor \Q)_{\SL(M)}.\]

Consider the map $\psi\colon \Tits(M)\to X_{n-1}(\class(\O))$ from the previous section, which is invariant under the action of $\GL(M)$ on $\Tits(M)$ and hence is also invariant under $\SL(M)$. 
Recall from Example~\ref{ex:joindiscrete} that the realization of $X_{n-1}(\class(\O))$
is homotopy equivalent to a $(\abs{\class(\O)} - 1)^{n-1}$-fold wedge of $(n-2)$-spheres, so $\widetilde{\HH}_{n-2}(X_{n-1}(\class(\O));\Z)\iso \Z^N$  with $N = (\abs{\class(\O_K)} - 1)^{n-1}$.
Proposition~\ref{prop:foldedapartment} states that the  map
\[\psi_{\ast}\colon \St(M)=\widetilde{\HH}_{n-2}(\Tits(M);\Z) \rightarrow \widetilde{\HH}_{n-2}(X_{n-1}(\class(\O));\Z)\iso \Z^N\] is surjective. 
Since $\psi$ is $\SL(M)$-invariant, this map factors through the coinvariants, yielding  a surjection $\St(M)_{\SL(M)}\onto \Z^N$. Tensoring with $\Q$ yields a surjection 
\[\St(M)_{\SL(M)}\tensor \Q\iso (\St(M)\tensor \Q)_{\SL(M)}\onto \Q^N,\]
so the dimension of $(\St(M)\tensor \Q)_{\SL(M)}$ is at least $(\abs{\class(\O_K)}-1)^{n-1}$, as claimed.
\end{proof}

\begin{example}
\label{ex:hyperbolic3manifolds}
The cohomology classes we construct in Theorem \ref{maintheorem:nonvanishing} were known classically
when $n=2$.  To illustrate this, consider a quadratic imaginary field $\Q(\sqrt{-d})$ as in
Remark \ref{rem:thmCfalseimaginary}. The Bianchi group $\SL_2 \O_d$ is a lattice in $\SL_2 \C$,
and thus acts by isometries on $\HBolic^3$.  The associated
locally symmetric space $X_d = \SL_2 \O_d \backslash \HBolic^3$ is a noncompact arithmetic $3$-dimensional hyperbolic orbifold of cohomological dimension 2. The
cusps of $X_d$ are in bijection with the $\SL_2 \O_d$-conjugacy classes of parabolic subgroups in $\SL_2 \O_d$, and thus with the $\SL_2\O_d$-orbits of $\mathbb{P}^1(\Q(\sqrt{-d}))$, and thus with the ideal class group $\class(\O_d)$.
An embedded path in $X_d$ connecting one cusp to another defines an element of the locally
finite homology group $\HH_1^{\text{lf}}(X_d;\Q)$, which is dual to $\HH^2(X;\Q)\iso \HH^2(\SL_2 \O_d;\Q)$. Since there are 
$\abs{\class(\O_d)}$ cusps, intersecting with such paths gives a $(\abs{\class(\O_d)}-1)$-dimensional projection of $\HH^2(X;\Q)\iso \HH^2(\SL_2 \O_d;\Q)$.
A similar procedure works for $\SL_2 \O_K$ for any number field $K$. However the case $n \geq 3$ is more complicated:
the cusps overlap in complicated ways, so this simple argument does not~work.
\end{example}

\noindent We now give a strengthening of Theorem~\ref{maintheorem:nonintegrality}; 
Theorem~\ref{maintheorem:nonintegrality} is the special case when $M\iso \O^n$. 
\begin{theorem-prime}{maintheorem:nonintegrality}
\label{thm:nonintegralitystronger}
Let $\O$ be a Dedekind domain with $1<\abs{\class(\O)}<\infty$, and let $M$ be a rank $n$ projective $\O$-module for some $n\geq 2$. If $\abs{\class(\O)}=2$, assume that $M$ is not the unique non-free $\O$-module of rank $2$. Then $\St(M)$ is not generated by $M$-integral apartment classes.
\end{theorem-prime}
\begin{proof}
Let $\St^{\Int}(M)$ be the subspace of $\St(M)$ spanned by $M$-integral 
apartment classes. By Proposition~\ref{prop:foldedapartment}, the map 
$\psi_{\ast}\colon \St(M) \rightarrow \widetilde{\HH}_{n-2}(X_n(\class(\O));\Z)$ is surjective.  Thus to prove that $\St^{\Int}(M)\neq \St(M)$ it is enough to prove that $\psi_{\ast}(\St^{\Int}(M))$ is a proper subspace of
$\widetilde{\HH}_{n-2}(X_{n-1}(\class(\O));\Z)$.

Consider an $M$-integral frame $\II=\{I_1,\ldots,I_n\}$ with $[I_k]=c_k$ and $M=I_1\oplus\cdots\oplus I_n$; the latter property implies that 
$c_1+\cdots+c_n=[M]\in \class(\O)$.  We claim first that $\psi_{\ast}([A_{\II}])$ only depends on $\{c_1,\ldots,c_n\}$, 
and not on the particular integral frame $\II$.  To see this, recall from Example~\ref{ex:Titssummands} that the chambers of an integral apartment $A_{\II}$ are of the form
\[\VV_\sigma=I_{\sigma(1)}\ \subsetneq\  I_{\sigma(1)}\oplus I_{\sigma(2)}\ \subsetneq
\ \cdots\ \subsetneq\  I_{\sigma(1)}\oplus\cdots\oplus I_{\sigma(n-1)}\] 
for $\sigma\in S_n$.  Given $X\subset [n]$, we have $[\bigoplus_{k\in X} I_k]=\sum_{k\in X} c_k$, so the chamber 
$\VV_\sigma$ is taken by $\psi$ to the chamber
\begin{equation}
\label{eq:Csigma}
\CC_\sigma= \big(c_{\sigma(1)},c_{\sigma(1)}+c_{\sigma(2)},\ldots,c_{\sigma(1)}+\cdots+c_{\sigma(n-1)}\big).
\end{equation}
The claim follows.

Next, assume for the moment that $c_i=c_j$ for some $i \neq j$.  Define the 
involution $\sigma\mapsto \widehat{\sigma}$ on $S_n$ by 
$\widehat{\sigma}=(i\ \,j) \sigma$.   From~\eqref{eq:Csigma} we see that 
$\CC_{\widehat{\sigma}}$ will coincide with $\CC_{\sigma}$ for all 
$\sigma\in S_n$.  Since $(-1)^{\widehat{\sigma}}=(-1)\cdot (-1)^{\sigma}$, the 
chamber $\VV_{\widehat{\sigma}}$ occurs with opposite orientation from 
$\VV_\sigma$ in $[A_{\II}]$.  We conclude that $\psi_{\ast}([A_{\II}])=0$ 
unless the classes $c_1,\ldots,c_n\in \class(\O)$ are distinct, in which case
(up to sign) $\psi_{\ast}([A_{\II}])$ only depends on the unordered set
$\{c_1,\ldots,c_n\} \subset \class(\O)$.
This already shows that when $n>\abs{\class(\O)}$, the image
$\psi_{\ast}(\St^{\Int}(M))$ vanishes. We can also note at this point that for $n=2$ and $\cl(\O)=\Z/2\Z$, there are \emph{no} 2-element subsets $\{c_1,c_2\}$ with $c_1+c_2=0\in \Z/2\Z$. 
Our assumptions say that if $n=2$ and $\cl(\O) = \Z/2\Z$, then $[M]=0$; in this case
we have that $\psi_{\ast}(\St^{\Int}(M)) = 0$.  Since
$\widetilde{\HH}_{n-2}(X_{n-1}(\class(\O));\Z) = \widetilde{\HH}_{0}(\Z/2\Z;\Z)\iso\Z$ in this case, this shows that $\psi_{\ast}(\St^{\Int}(M))$ is a proper subspace.

We complete the proof in all other cases by an elementary counting argument. By Example~\ref{ex:joindiscrete}, the rank of
$\widetilde{\HH}_{n-2}(X_{n-1}(\class(\O));\Z)$ is $(\abs{\class(\O)}-1)^{n-1}$.  The precise number of 
$n$-element subsets $\{c_1,\ldots,c_n\}\subset \class(\O)$ satisfying 
$c_1+\cdots+c_n=[M]$ is hard to calculate and depends on the group structure of $\class(\O)$, but we can easily compute an upper bound that will suffice.
For any abelian group $A$ and any fixed $a\in A$, the number of \emph{ordered} tuples $(c_1,\ldots,c_n)$ with $c_1+\cdots+c_n=a$ is precisely $\abs{A}^{n-1}$, so the number of ordered tuples with \emph{distinct} entries is at most $\abs{A}^{n-1}$. The number of $n$-element subsets $\{c_1,\ldots,c_n\}$ with $c_1+\cdots+c_n=a$ is thus at most $\frac{\abs{A}^{n-1}}{n!}$.  
To prove that $\psi_{\ast}(\St^{\Int}(M))$ is a proper subspace of
$\widetilde{\HH}_{n-2}(X_{n-1}(\class(\O));\Z)$,
it is therefore enough to prove that
\[\frac{\abs{\class(\O)}^{n-1}}{n!} < (\abs{\class(\O)}-1)^{n-1}\]
for all $n \geq 2$ and $\abs{\class(\O)} \geq 2$ except for the single case
$n = 2$ and $\abs{\class(\O)} = 2$ (which we handled above).

In fact, we will prove that for $x \in \R$ with $x \geq 2$ and $n \geq 2$, we have
\begin{equation}
\label{eq:real-bound}
\frac{x^{n-1}}{n!}< (x-1)^{n-1}
\end{equation}
except when $x=2$ and $n=2$.
Indeed, \eqref{eq:real-bound} can be rearranged to $\big(\frac{x}{x-1})^{n-1}<n!$. When $x>2$ we have $\frac{x}{x-1}<2$, and $n!=2\cdot 3\cdots n$ is the product of $n-1$ terms, each of which is at least $2$. When $x=2$, the desired inequality is $2^{n-1}<n!$, which follows when $n>2$ by noting that $2^{n-1}<2\cdot 3^{n-2}\leq n!$ in this case.
\end{proof}
In the excluded case when $\cl(\O)\iso \Z/2\Z$ and $[M]\neq 0$, let $I\subset \O$ be a non-principal ideal, so that $M\iso \O\oplus I$. It can be verified as in Proposition~\ref{prop:basecasen2} that $\St(M)$ is spanned by $M$-integral apartment classes if and only if the group $\SL(M)$ is generated by diagonal matrices together with elementary matrices of the form ${\scriptstyle \begin{pmatrix}1&I\\0&1\end{pmatrix}}$ and ${\scriptstyle \begin{pmatrix}1&0\\I^{-1}&1\end{pmatrix}}$. It would be interesting to know when this  condition holds for $\SL(M)$; we are not aware of any results on this subject.

\begin{footnotesize}
\noindent
\begin{tabular*}{\linewidth}[t]{@{}p{\widthof{Department of Mathematics}+0.35in}@{}p{\widthof{Department of Mathematics}+0.35in}@{}p{\linewidth - \widthof{Department of Mathematics} - \widthof{Department of Mathematics}-0.7in}@{}}
{\raggedright
Thomas Church\\
Department of Mathematics\\
Stanford University\\
450 Serra Mall\\
Stanford, CA 94305\\
\myemail{tfchurch@stanford.edu}}
&
{\raggedright
Benson Farb\\
Department of Mathematics\\
University of Chicago\\
5734 University Ave.\\
Chicago, IL 60637\\
\myemail{farb@math.uchicago.edu}}
&
{\raggedright
Andrew Putman\\
Department of Mathematics\\
University of Notre Dame\\
279 Hurley Bldg\\
Notre Dame, IN 46556\\
\myemail{andyp@nd.edu}}
\end{tabular*}\hfill
\end{footnotesize}

\end{document}